\documentclass[12pt]{amsart}

\usepackage{amsmath,amssymb,amscd,amsxtra,upref}
\usepackage{latexsym}
\usepackage[dvips]{graphics,epsfig}
\usepackage{enumerate}
\usepackage[colorlinks=true, pdfstartview=FitV, linkcolor=blue, citecolor=blue, urlcolor=blue]{hyperref}
\usepackage{color}
\usepackage{bbm}
\usepackage{mathabx}
\allowdisplaybreaks
\DeclareFontFamily{U}{mathx}{\hyphenchar\font45}
\DeclareFontShape{U}{mathx}{m}{n}{
      <5> <6> <7> <8> <9> <10>
      <10.95> <12> <14.4> <17.28> <20.74> <24.88>
      mathx10
      }{}
\DeclareSymbolFont{mathx}{U}{mathx}{m}{n}
\DeclareFontSubstitution{U}{mathx}{m}{n}
\DeclareMathAccent{\widecheck}{0}{mathx}{"71}
\DeclareMathAccent{\wideparen}{0}{mathx}{"75}


\headheight=8pt
\topmargin=0pt
\textheight=624pt
\textwidth=432pt
\oddsidemargin=18pt
\evensidemargin=18pt

 \makeatletter
\newtheorem*{rep@theorem}{\rep@title}
\newcommand{\newreptheorem}[2]{%
\newenvironment{rep#1}[1]{%
 \def\rep@title{#2 \ref*{##1}}%
 \begin{rep@theorem}}%
 {\end{rep@theorem}}}
\makeatother

\newtheorem{theorem}{Theorem}[section]
\newtheorem{lemma}[theorem]{Lemma}

\newtheorem{corollary}[theorem]{Corollary}

\newreptheorem{theorem}{Theorem}

\theoremstyle{remark}
\newtheorem{dfn}{Definition}[section]

\newcommand{\R}{\mathbb{R}}
\newcommand{\Rn}{\mathbb{R}^n}
\newcommand{\Rnn}{\mathbb{R}^{2n}}
\newcommand{\Ha}{\mathcal{H}}
\newcommand{\Hn}{\mathbb{H}^n}
\newcommand{\He}{\mathbb{H}}
\newcommand{\Sw}{\mathcal{S}}

\newcommand{\Cn}{\mathbb{C}^n}
\newcommand{\gnm}{\gamma_{n,m}}
\newcommand{\munm}{\mu_{2n,m}}
\newcommand{\Gh}{G_h(2n,m)}
\newcommand{\V}{\mathbb{V}}
\newcommand{\Ma}{\mathcal{M}}
\newcommand{\muhat}{\widehat{\mu}}
\newcommand{\nuhat}{\widehat{\nu}}
\newcommand{\Vp}{V^{\perp}}

\thanks{
{\bf Acknowledgments} The author would like to thank professor Jeremy T. Tyson for his guidance and helpful insights during this project. The author would also like to thank the referee for helpful comments and corrections that helped significantly improve this manuscript. This project was supported by NSF Grant DMS-1600650 ``Mappings and measures in sub-Riemannian and metric spaces."\\
\\
{\it Key words and phrases.} Integral geometry, orthogonal projections,
Intersection of planes and sets, Isotropic Grassmanian .
\\
{\bf 2010 Mathematics Subject Classification.} 28A75.}

\begin{document}





\address{Fernando Rom\'{a}n-Garc\'{i}a, Department of Mathematics, 250 Atgeld Hall, University of Illinois Urbana Champaign,
Urbana, Il 61801, USA.} \email{romanga2@illinois.edu}

\title[Isotropic projections and slices]
{Intersection of projections and slicing theorems for the isotropic Grassmannian and the Heisenberg group}
\author{Fernando Rom\'{a}n-Garc\'{i}a}

\begin{abstract}
This paper studies the Hausdorff dimension of the intersection of isotropic projections of subsets of $\Rnn$, as well as dimension of intersections of sets with isotropic planes. It is shown that if $A$ and $B$ are Borel subsets of $\Rnn$ of dimension greater than m, then for a positive measure set of isotropic m-planes, the intersection of the images of $A$ and $B$ under orthogonal projections onto these planes have positive Hausdorff $m$-measure.
In addition, if  $A$ is a measurable set of Hausdorff dimension greater than $m$, then there is a set $B\subset\Rnn$ with $\dim B\leq m$ such that for all $x\in\Rnn\setminus B$ there is a positive measure set of isotropic m-planes for which the  translate by $x$ of the orthogonal complement of each such plane, intersects $A$ on a set of dimension $\dim A-m$. 
These results are then applied to obtain analogous results on the $n^{th}$ Heisenberg group.
\end{abstract}

\maketitle

\section{Introduction}
For $m\leq n$, denote by $G(n,m)$ the Grassmannian of m-dimensional subspaces of $\Rn$. $G(n,m)$ is endowed with a unique $\mathcal{O}(n)$-invariant probability measure $\gnm$. These spaces play an important role in the integral geometry of Euclidean space. A lot of work has been done concerning the effect of orthogonal projection maps, $P_V:\Rn\to V$, on the dimension and measure of sets. One of the most influential results dates back to J. Marstrand \cite{Marstrand} who proved that for a Borel set $A\subset\R^2$, $\dim P_VA=\min\{\dim A, 1\}$ for $\gamma_{2,1}$-a.e. $V\in G(2,1)$. This was later generalized to higher dimensions by P. Mattila in \cite{Mattila3} and later work. The most general result, including the Besicovitch-Federer characterization of unrectifiability (\cite{Bes1}, \cite{Federer}), can be stated in the following encompassing theorem.
\begin{theorem}\label{PROJ}
Let $A\subset\Rn$ be a Borel set of dimension $s$. 
\begin{enumerate}
\item If $s\leq m$, $\dim P_VA = s$ for $\gnm$-a.e. $V\in G(n,m)$.
\item If $s> m$, $\Ha^m( P_VA) > 0$ for $\gnm$-a.e. $V\in G(n,m)$.
\item If $s> 2m$, $Int( P_VA) \neq\varnothing$ for $\gnm$-a.e. $V\in G(n,m)$.
\end{enumerate} 
Moreover, in the case where $s=m$, and with the added hypotesis that $\Ha^m(A)<\infty$ ,  $\Ha^m( P_VA) = 0$ for $\gnm$-a.e. $V\in G(n,m)$ if, and only if, $A$ is purely $m$-unrectifiable. 
\end{theorem}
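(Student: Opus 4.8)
The plan is to run the standard potential-theoretic and Fourier-analytic machinery, reducing each dimensional assertion to an energy estimate that is then averaged over $G(n,m)$. Recall the $t$-energy $I_t(\mu)=\iint |x-y|^{-t}\,d\mu(x)\,d\mu(y)$ and Frostman's lemma, which identifies $\dim A$ with the supremum of those $t$ for which some $\mu\in\Ma(A)$ (a compactly supported Radon measure on $A$ of positive finite mass) satisfies $I_t(\mu)<\infty$. The single geometric input driving parts (1)--(3) is the integral-geometric estimate
$$\int_{G(n,m)} |P_V z|^{-t}\,d\gnm(V) \approx |z|^{-t}, \qquad 0<t<m,$$
valid for every $z\in\Rn\setminus\{0\}$, together with its endpoint variants.

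For part (1), since $P_V$ is $1$-Lipschitz and $V$ is $m$-dimensional we trivially have $\dim P_V A\le\min\{s,m\}=s$. For the reverse inequality I would fix any $t<s$, take $\mu\in\Ma(A)$ with $I_t(\mu)<\infty$, and push forward to obtain $P_{V\#}\mu\in\Ma(P_V A)$. Fubini and the displayed estimate give
$$\int_{G(n,m)} I_t(P_{V\#}\mu)\,d\gnm(V) \approx I_t(\mu)<\infty,$$
so $I_t(P_{V\#}\mu)<\infty$, and hence $\dim P_V A\ge t$, for $\gnm$-a.e. $V$; letting $t\uparrow s$ through a countable sequence finishes the claim. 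For parts (2) and (3) I would pass to the Fourier side. Fixing $m<t<s$ and $\mu$ with $I_t(\mu)<\infty$ (so that $I_m(\mu)<\infty$ by compactness of the support), Plancherel on $V$ together with the identity $\widehat{P_{V\#}\mu}=\widehat{\mu}|_V$ yields
$$\int_{G(n,m)} \|P_{V\#}\mu\|_{L^2(V)}^2\,d\gnm(V) \approx \int_{\Rn} |\widehat{\mu}(\xi)|^2\,|\xi|^{m-n}\,d\xi \lesssim I_m(\mu)<\infty,$$
so for a.e. $V$ the measure $P_{V\#}\mu$ is absolutely continuous with an $L^2$ density, forcing $\Ha^m(P_V A)>0$. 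When $s>2m$ the analogous computation with the $L^1(V)$ norm of $\widehat{P_{V\#}\mu}$ in place of the $L^2$ norm is finite, so the density is continuous and $P_V A$ contains a nonempty open set, giving (3).

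The genuinely hard part is the \emph{moreover} clause in the critical case $s=m$, which is the Besicovitch--Federer projection theorem. Here I would decompose $A=A_r\cup A_u$ into its $m$-rectifiable and purely $m$-unrectifiable parts. The rectifiable piece is handled by the area formula: if $\Ha^m(A_r)>0$ then $\Ha^m(P_V A_r)>0$ for $\gnm$-a.e. $V$, since on almost every piece of a Lipschitz image of $\R^m$ the restricted projection has nonvanishing Jacobian in a.e. direction. The crux is to show that the unrectifiable piece satisfies $\Ha^m(P_V A_u)=0$ for $\gnm$-a.e. $V$; this is the deep Besicovitch--Federer theorem, which I expect to be the main obstacle and which rests on a careful blow-up and tangent-measure analysis of the fine structure of purely unrectifiable sets, rather than on the soft energy methods used above. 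Combining the two pieces, $\Ha^m(P_V A)=0$ for $\gnm$-a.e. $V$ if and only if $\Ha^m(A_r)=0$, that is, if and only if $A$ is purely $m$-unrectifiable.
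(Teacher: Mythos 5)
The paper never proves Theorem \ref{PROJ}: it is quoted in the introduction as classical background, attributed to Marstrand and Mattila for parts (1)--(3) and to Besicovitch and Federer for the rectifiability characterization, so there is no internal argument to compare yours against. Judged on its own terms, your treatment of (1)--(3) is the correct standard route: the Kaufman--Mattila energy method driven by the kernel estimate $\int_{G(n,m)}|P_Vz|^{-t}\,d\gnm(V)\lesssim|z|^{-t}$ for $0<t<m$, and the Fourier-side arguments using $\widehat{P_{V\#}\mu}=\widehat{\mu}\lfloor V$ together with the Grassmannian disintegration formula. This is exactly the machinery the paper later rebuilds in the isotropic setting (Lemma \ref{desintegrate}, Lemma \ref{FrostAbs}, Theorem \ref{caseA}, with the measure bound \eqref{mubydelta} playing the role of your kernel estimate), so your route is the expected one. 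Two small repairs: in part (2) the quantity you integrate over $G(n,m)$ should be $\|\widehat{P_{V\#}\mu}\|_{L^2(V)}^2$ rather than $\|P_{V\#}\mu\|_{L^2(V)}^2$, since the measure only acquires an $L^2$ density after that integral is shown to be finite; and in part (3) the finiteness of $\int|\widehat{\mu}(\xi)|\,|\xi|^{m-n}\,d\xi$ is not the ``analogous computation'' verbatim but a Cauchy--Schwarz splitting against the weight $|\xi|^{t-n}$, which is where the hypothesis $t>2m$ enters. Both are routine.

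The genuine shortfall is the \emph{moreover} clause. Your argument for one direction (positive rectifiable part $\Rightarrow$ positive-measure projections, via tangent planes and the area formula) is fine as a sketch, but for the converse you only name the Besicovitch--Federer theorem and correctly observe that none of the soft energy or Fourier methods above can yield it. So as a self-contained proof the proposal is incomplete precisely at the deepest assertion in the statement. That said, this deferral mirrors the paper's own treatment, which likewise invokes Besicovitch and Federer by citation rather than proof; the honest fix is simply to present that step explicitly as an appeal to the literature rather than as a step you intend to carry out.
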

 Several other related problems have been studied with great success. For instance problems concerning intersection of projections of sets were studied by Mattila and Orponen in \cite{MattilaOrponen}, where they proved the following,
 
 \begin{theorem}\label{IntProj}
Let $A,B\subset \Rn$ be Borel sets,
\begin{enumerate}
\item If $\dim A>m$ and $\dim B>m$, then
\[\gnm (V\in G(n,m):\Ha^m(P_V A \cap P_V B)>0)>0.\]
\item If $\dim A>2m$ and $\dim B>2m$, then
\[\gnm (V\in G(n,m): Int(P_V A \cap P_V B)\neq\varnothing)>0.\]
\item If $\dim A>m,\ \dim B\leq m$, and $\dim A+\dim B>2m$, then for all $\epsilon>0$,
\[\gnm(V\in G(n,m): \dim [P_VA\cap  P_VB]>\dim B-\epsilon)>0.\]
\end{enumerate}
\end{theorem}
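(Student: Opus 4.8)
The plan is to attach Frostman measures to $A$ and $B$, push them forward under the projections $P_V$, and analyse the resulting densities through the interplay between the Fourier slice identity $\widehat{P_{V\#}\mu}=\muhat|_V$ and the integral-geometric formula
\[
\int_{G(n,m)}\int_V h(\xi)\,d\Ha^m(\xi)\,d\gnm(V)=c_{n,m}\int_{\Rn}h(\xi)\,|\xi|^{m-n}\,d\xi,\qquad h\ge 0.
\]
First I would reduce to compact sets and replace $A,B$ by measures: fix exponents $s_1<\dim A$ and $s_2<\dim B$ and, by Frostman's lemma together with inner regularity, choose compactly supported $\mu$ on $A$ and $\nu$ on $B$ with $\mu(B(x,r))\le r^{s_1}$ and $\nu(B(x,r))\le r^{s_2}$; write $\mu_V=P_{V\#}\mu$, $\nu_V=P_{V\#}\nu$, measures on $V$. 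For $s_1,s_2>m$ the energy form underlying Theorem \ref{PROJ}(2) yields $L^2$ densities $f_V,g_V$ for $\gnm$-a.e.\ $V$, with $\mathrm{supp}\,f_V\subset P_V(\mathrm{supp}\,\mu)\subset P_VA$ and $\mathrm{supp}\,g_V\subset P_V(\mathrm{supp}\,\nu)\subset P_VB$.

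For part (1) I would study the average overlap of the two densities. By Parseval on $V$ and the slice identity, $\int_V f_V\,g_V\,d\Ha^m=\int_V\muhat(\xi)\,\overline{\nuhat(\xi)}\,d\Ha^m(\xi)$, and integrating over $G(n,m)$ against $\gnm$ and inserting the integral-geometric formula turns this into a constant multiple of the mutual energy $\iint|x-y|^{-m}\,d\mu(x)\,d\nu(y)$. Since $s_1>m$ the $m$-potential of $\mu$ is bounded, so this energy is \emph{finite}, and being an integral of a strictly positive kernel against nonzero measures it is \emph{positive}. As the nonnegative integrand $\int_V f_Vg_V\,d\Ha^m$ has positive $\gnm$-average, it is strictly positive on a set of $V$ of positive measure; there $\{f_V>0\}\cap\{g_V>0\}\subset P_VA\cap P_VB$ has positive $\Ha^m$-measure, giving (1). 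For part (2) the hypothesis $s_1,s_2>2m$ upgrades the densities to continuous functions: a Cauchy--Schwarz split of $\int_{\Rn}|\muhat|\,|\xi|^{m-n}\,d\xi$ against $\int|\muhat|^2|\xi|^{s_1-n}\,d\xi\sim I_{s_1}(\mu)<\infty$ leaves the exponent $|\xi|^{2m-n-s_1}$, integrable at infinity exactly when $s_1>2m$, so $\muhat|_V\in L^1(V)$ and $f_V$ (likewise $g_V$) is continuous for a.e.\ $V$. Rerunning the averaging argument, at a point where $f_Vg_V>0$ continuity produces an open neighbourhood on which both densities are positive, and that neighbourhood lies in $P_VA\cap P_VB$; hence $Int(P_VA\cap P_VB)\neq\varnothing$ on a positive-measure set of planes.

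For part (3) the set $B$ is too small to have an absolutely continuous projection, so instead of pairing two densities I would form an \emph{intersection measure} $\lambda_V:=f_V\,\nu_V$ on $V$, carried by $\{f_V>0\}\cap\mathrm{supp}\,\nu_V\subset P_VA\cap P_VB$. Its total mass averages, exactly as above, to a constant times $\iint|x-y|^{-m}\,d\mu\,d\nu$, again finite (since $s_1>m$) and positive, so $\lambda_V\neq 0$ for positively many $V$. To control the \emph{dimension} of its support I would bound the averaged $t$-energy
\[
\int_{G(n,m)}I_t(\lambda_V)\,d\gnm(V)=\int_{G(n,m)}\iint|x-y|^{-t}f_V(x)f_V(y)\,d\nu_V(x)\,d\nu_V(y)\,d\gnm(V),
\]
aiming to show it is finite for every $t<s_2$ whenever $s_1+s_2>2m$. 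Given such a bound, $I_t(\lambda_V)<\infty$ for positively many $V$ with $\lambda_V\neq0$, so $\dim(P_VA\cap P_VB)\ge\dim\mathrm{supp}\,\lambda_V\ge t$; letting $s_2\uparrow\dim B$ and $t\uparrow s_2$ gives $\dim(P_VA\cap P_VB)>\dim B-\epsilon$ on a positive-measure set, which is (3).

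The main obstacle is precisely this last energy estimate. Unlike the mass computation, the two copies of the $V$-dependent density $f_V$ inside $I_t(\lambda_V)$ prevent a direct reduction to a single Riesz energy, and one must pass to the Fourier side, write $\widehat{\lambda_V}=\muhat|_V*\nuhat|_V$ as a convolution on $V$, and split the frequency integral into low and high ranges. I expect the low-frequency part to be governed by $t<s_2$ and the high-frequency part by the gap $s_1+s_2>2m$ (the arithmetic that forced $s_1>2m$ in part (2), now shared between the two exponents), with the Grassmannian average once more converted to a weighted integral over $\Rn$ by the integral-geometric formula. Making this decomposition uniform in $V$ and controlling the cross terms produced by the convolution is the technical heart of the argument.
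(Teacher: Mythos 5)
Your parts (1) and (2) are correct and follow essentially the same route as the paper's own proofs (given there for the isotropic family, in Theorem \ref{caseA} and Theorem \ref{isoprojmeasures}): Frostman measures, the slice identity $\widehat{P_{V\#}\mu}=\muhat\lfloor_V$, Plancherel on $V$, and the disintegration formula turn the averaged overlap $\int_V f_Vg_V\,d\Ha^m$ into the mutual energy $I_m(\mu,\nu)\in(0,\infty)$, and for (2) the hypothesis $s_i>2m$ upgrades the densities to continuous ones. One small repair in (2): the weight $|\xi|^{2m-n-s_1}$ appearing after Cauchy--Schwarz is not integrable near the origin when $s_1>2m$, so you must first split off $\{|\xi|\le 1\}$ and use $|\muhat|\le\mu(\Rn)$ there; only the region $\{|\xi|>1\}$ is handled by your exponent count.

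Part (3) contains a genuine gap, which you flag yourself but misdiagnose. First, $\lambda_V:=f_V\,\nu_V$ is not well defined as written: $f_V$ is only an $\Ha^m$-equivalence class, while $\nu_V=P_{V\#}\nu$ is in general singular with respect to $\Ha^m$ (recall $\dim B\le m$), so neither $\int_V f_V\,d\nu_V$ nor your assertion that this mass averages ``exactly as above'' to $I_m(\mu,\nu)$ has a priori meaning. The bulk of the paper's proof of part (3) of Theorem \ref{isoprojint} is devoted to exactly this point: it mollifies, setting $\mu_\delta=\mu\ast\psi_\delta$, obtains the uniform two-sided bound \eqref{control}, proves $\mu_V\in H^{(s-m)/2}(V)$ with the averaged control \eqref{boundmu}, and then uses a maximal function estimate plus dominated convergence to produce a canonical representative $f_V=\lim_{\delta\to 0}P_{V\#}\mu_\delta$ in $L^1(P_{V\#}\nu)$, for which \eqref{BoundforDCT} and \eqref{control2} hold. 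Some argument of this type is indispensable before your $\lambda_V$ exists.

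Second, the averaged energy estimate $\int I_t(\lambda_V)\,d\gnm<\infty$ that you defer as the ``technical heart'' is not needed, and the Fourier convolution decomposition you sketch is not how the proof goes. The paper's closing step is a truncation: for each good $V$ choose $C_V$ so large that $\mathbbm{1}_{\{f_V\le C_V\}}f_V\,dP_{V\#}\nu$ is still nontrivial. This truncated measure is dominated by $C_V\,P_{V\#}\nu$, and the averaged bound \eqref{boundnu} (whose Grassmannian analogue holds verbatim) gives $I_t(P_{V\#}\nu)<\infty$ for a.e.\ $V$; hence the truncated measure is a nontrivial measure of finite $t$-energy supported in $P_VA\cap P_VB$, so $\dim(P_VA\cap P_VB)\ge t$ on a positive-measure set of planes, and letting $t\uparrow\dim B$ finishes. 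The missing idea is thus not a delicate high/low frequency analysis of $\widehat{f_V\nu_V}$ but the elementary truncation-plus-domination step, made legitimate by the mollification argument above.
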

 
Another result that will be relevant here is the almost sure dimension estimates of planar slices of sets in $\Rn$. This was, once again, considered in the plane by Marstrand in \cite{Marstrand} and generalized to higher dimensions by Mattila in \cite{Mattila3}. Recently there have been further work in this area. For instance in \cite{MattilaOrponen}, P. Mattila and T. Orponen showed the following, 

 \begin{theorem}\label{SLICE}
 Let $m<s\leq n$ and let $A\subset\Rn$ be a Borel set with $0<\Ha^s(A)<\infty$. Then there is a set $B$ with $\dim B\leq m$ and such that for every $x\in\Rn\setminus B$,
 \[\gnm(V\in G(n,m): \dim [A\cap(\Vp+x) ]=s-m)>0.\]
 \end{theorem}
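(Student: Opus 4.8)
\subsection*{Proof proposal}

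The plan is to establish the two inequalities $\dim[A\cap(\Vp+x)]\le s-m$ and $\dim[A\cap(\Vp+x)]\ge s-m$ by rather different means — the first by a co-area argument valid for almost every plane and base point, the second by potential theory giving a positive measure set of planes — and then to invest the real effort in controlling the set of base points $x\in\Rn$ for which the lower bound can fail. Throughout, $P_V\colon\Rn\to V$ denotes orthogonal projection for $V\in G(n,m)$, so that the fibres of $P_V|_A$ are precisely the slices $A\cap(\Vp+a)$, parametrised by $a=P_V(x)\in V$.

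For the upper bound I would use a Fubini/co-area inequality. Integrating the $(s-m)$-measure of the fibres of $P_V$ over the base point gives
\[
\int_V \Ha^{s-m}\big(A\cap(\Vp+a)\big)\,d\Ha^m a \;\lesssim\; \Ha^s(A)<\infty,
\]
so that for $\gnm$-a.e.\ $V$ and $\Ha^m$-a.e.\ $a\in V$ the slice has finite $(s-m)$-measure and hence dimension at most $s-m$. This direction is routine and uses only the finiteness $\Ha^s(A)<\infty$.

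The lower bound is the analytic core, and I would attack it through energies. By Frostman's lemma choose a Radon measure $\mu$ with $\mathrm{spt}\,\mu\subset A$, $\mu(\Rn)>0$ and $\mu(B(x,r))\le r^s$, so that the $t$-energy $I_t(\mu)=\iint|x-y|^{-t}\,d\mu x\,d\mu y$ is finite for every $t<s$. For $V\in G(n,m)$ and $a\in V$ let $\mu_{V,a}$ be the sliced measure supported on $\mathrm{spt}\,\mu\cap(\Vp+a)$ obtained by disintegrating $\mu$ along the fibres of $P_V$, normalised so that $\mu=\int_V\mu_{V,a}\,d\Ha^m a$ (legitimate for $\gnm$-a.e.\ $V$, since $P_{V\sharp}\mu\ll\Ha^m$ there by Theorem~\ref{PROJ}(2)); in particular $\int_V\mu_{V,a}(\Rn)\,d\Ha^m a=\mu(\Rn)>0$. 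The key tool is the classical integral-geometric energy estimate
\[
\int_{G(n,m)}\int_V I_t(\mu_{V,a})\,d\Ha^m a\,d\gnm(V)\;\lesssim\; I_{t+m}(\mu).
\]
Fixing $t<s-m$ makes $t+m<s$, so the right-hand side is finite; combined with the mass identity this shows that for $\gnm$-a.e.\ $V$ there is a positive $\Ha^m$-measure set of base points $a$ for which $\mu_{V,a}$ has positive mass and finite $t$-energy, whence such a slice has dimension at least $t$. Letting $t\uparrow s-m$ along a sequence and intersecting the corresponding full-measure sets of planes gives, for $\gnm$-a.e.\ $V$, a positive measure set of base points with slice dimension at least $s-m$; with the upper bound this yields equality.

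The final and hardest step is to convert ``for a.e.\ $V$, positively many base points $a\in V$ are good'' into the form stated in the theorem, namely that outside a set $B\subset\Rn$ with $\dim B\le m$ every $x$ has positively many good planes. \emph{This base-point reduction is the main obstacle.} The difficulty is that $\mu_{V,a}$ only sees the projected measure $P_{V\sharp}\mu$, so a point $x$ can fail to be good simply because $P_V(x)$ carries no projected mass, and one must preclude such failures from conspiring across all directions on a set of dimension larger than $m$. My plan is to set $B=\{x:\gnm(\{V:\ x\text{ lies on a good slice}\})=0\}$ and argue by contradiction: if $\dim B>m$, take a Frostman measure $\theta$ on $B$ with $I_m(\theta)<\infty$, so that by Fourier/energy estimates $P_{V\sharp}\theta\ll\Ha^m$ for $\gnm$-a.e.\ $V$ (Theorem~\ref{PROJ}(2)). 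Rerunning the energy estimate above with the base points distributed according to $P_{V\sharp}\theta$ in place of $\Ha^m$ should show that for $\theta$-a.e.\ $x$ positively many slices through $x$ support a measure of positive mass and finite $t$-energy, forcing $x\notin B$ and contradicting $\theta(B)>0$. The delicate technical point — and where I expect to spend the most effort — is making the substitution of $\Ha^m$ by $P_{V\sharp}\theta$ rigorous, i.e.\ bounding $\int_V I_t(\mu_{V,a})\,d(P_{V\sharp}\theta)(a)$ on average over $V$ by a joint energy of $\mu$ and $\theta$; this is precisely the place where the hypothesis $\dim B>m$ has to be upgraded to an integrable projected density.
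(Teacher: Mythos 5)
Your overall architecture --- sliced measures, the energy estimate $\int_{G(n,m)}\int_V I_t(\mu_{V,a})\,d\Ha^m a\,d\gnm(V)\lesssim I_{t+m}(\mu)$ (the paper's Lemma \ref{10.7analogue}), the almost-everywhere equality statement, and then a Frostman measure $\theta$ on the putative bad set $B$ with $I_m(\theta)<\infty$ --- is the same as in the paper's proof of the isotropic analogue (Theorem \ref{isoslice}). But there are two genuine gaps. First, your upper bound does not prove what the theorem needs. The Eilenberg/co-area inequality gives, for each fixed $V$, finiteness of $\Ha^{s-m}$ of the slice for $\Ha^m$-a.e.\ base point $a\in V$; the theorem, however, requires $\dim[A\cap(\Vp+x)]\le s-m$ for a \emph{fixed} $x$ and $\gnm$-a.e.\ $V$. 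These quantifiers do not commute: for a fixed $x$ the point $P_V(x)$ is a single point of $V$, and nothing prevents it from lying in the $V$-dependent exceptional null set for a large set of planes. This is precisely why the paper proves a separate statement (Lemma \ref{allupperbound}), by removing a ball $B(x,r)$, covering $A\setminus B(x,r)$, and invoking the quantitative plane-hitting estimate $\gnm(\{V:|P_V(z)|\le\delta\})\lesssim\delta^m|z|^{-m}$ (the analogue of \eqref{mubydelta}); the paper even points out that the corresponding statement at the level of measures, for every $x$, is not known (compare Theorem \ref{UpperBound}). Without this ingredient, your argument can only conclude $\dim[A\cap(\Vp+x)]\ge s-m$ for positively many planes through $x\notin B$, not the stated equality.

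Second, the step you yourself flag as delicate --- bounding $\int_{G(n,m)}\int_V I_t(\mu_{V,a})\,d(P_{V\#}\theta)(a)\,d\gnm(V)$ by a joint energy of $\mu$ and $\theta$ --- is not justified and is unlikely to be provable from $I_m(\theta)<\infty$ alone: rerunning the proof of Lemma \ref{10.7analogue} with the weight $\theta_V(a)$ inserted requires pointwise (essentially $L^\infty$) control of the projected density $\theta_V$, whereas finite $m$-energy only yields $\theta_V\in L^2(V)$. The paper avoids weighted energy estimates altogether. It weights only the \emph{mass}: by the mutual-energy computation (the analogue of \eqref{L1comp}), $\int\int_V\mu_V\theta_V\,d\Ha^m\,d\gnm = c\, I_m(\mu,\theta)>0$, so for positively many $V$ the overlap set $C_V=\{v\in V:\mu_V(v)\theta_V(v)>0\}$ has positive $\Ha^m$-measure. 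The dimension information is then transferred not by estimating energies of slices at $\theta$-typical fibers, but from the already-proven a.e.\ statement (Theorem \ref{InSlice}) through the sliced-measure identities \eqref{ProjSlice} and \eqref{equal}: since $\mu$ gives zero mass to the set of points whose slices are low-dimensional, the $\mu$-slice over a.e.\ fiber is concentrated on good points; hence any fiber over $v\in C_V$ contains a good point $x$ and a point $y\in B$, and since $A\cap(\Vp+x)=A\cap(\Vp+y)$ is the same set, $y$ is in fact good --- contradicting $y\in B$. Replacing your weighted-energy step by this mass-positivity-plus-a.e.-statement argument (and adding the correct upper-bound lemma) would close both gaps.
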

This paper will study integral geometric properties of the isotropic Grassmannian, with the goal of obtaining a series of results analogous to what is known for the standard Grassmannian in $\Rn$.

Denote by $\omega$ the standard symplectic form in $\Rnn$, $\omega((x,y),(u,v))=x\cdot v-y\cdot u$, and  for $0<m\leq n$ consider the collection $\Gh =\{ V\in G(2n,m): \omega\lfloor V\equiv 0\}$. This collection is known as the isotropic Grassmannain of m dimensional subspaces of $\Rn$. The spaces $\Gh$ are very well known and have been heavily studied for quite some time. In particular the space $G_{h}(2n,n)$ is known as the Lagrangian Grasmannian and is central in the study of symplectic geometry and Hamiltonian mechanics. The reader is referred \cite[Section 2]{BFMT} for a concise and clear summary of the main properties of the isotropic Grassmannian. Here stated, without proof or deep discussion, are some of the properties that will be relevant in this manuscript. 

The space $\Gh$  is a smooth submanifold  of $G(2n,m)$ with positive codimension. Indeed, $\dim(\Gh)=2nm-\frac{m(3m-1)}{2}=:\eta_m^n$. This tells us that $\gnm(\Gh)=0$. Hence, a result analogous to Theorem \ref{PROJ} cannot be obtained by simply applying the theorem to planes in $\Gh$, instead a natural measure on $\Gh$ is needed. This can be obtained by noting that the unitary group, $\mathcal{U}(n)$, acts smoothly and transitively on $\Gh$. Therefore $\Gh$ inherits a unique $\mathcal{U}(n)$-invariant probability measure that is here denote by $\munm$. Once again, it is important to note that $\munm$ is singular with respect to $\gamma_{2n,m}$. Nevertheless, it is possible to obtain isotropic analogues of many of the aforementioned results about $G(n,m)$. For instance, R. Hovila showed in \cite{Hovila} that for every $1\leq m\leq n$, the family of isotropic projections, $\{P_V\}_{V\in\Gh}$, is transversal in the sense of Peres and Schlag \cite{PeresSchlag}. This gives, as a corollary, the following
\begin{theorem}\label{ISOPROJ}
Let $A\subset\Rnn$ be a Borel set of dimension $s$ with $0<\Ha^s(A)<\infty$. Then, 
\begin{enumerate}
\item If $s\leq m$, $\dim P_VA = s$ for $\munm$-a.e. $V\in \Gh$.
\item If $s> m$, $\Ha^m( P_VA) > 0$ for $\munm$-a.e. $V\in \Gh$.
\item If $s>2m$ then, $Int(P_V(A)\neq\varnothing, \text{ for } \munm-a.e. V\in\Gh$.
\end{enumerate} 
In the case when $s=m$, $\Ha^m( P_VA) = 0$ for $\munm$-a.e. $V\in \Gh$ if, and only if, $A$ is purely $m$-unrectifiable. 
\end{theorem}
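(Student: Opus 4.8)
The plan is to derive Theorem~\ref{ISOPROJ} as a corollary of the transversality of the family $\{P_V\}_{V\in\Gh}$ established by Hovila, exactly as Theorem~\ref{PROJ} follows from transversality in the Euclidean setting via the Peres--Schlag machinery. The key point is that transversality in the sense of Peres and Schlag, once verified for a smooth family of projections parametrized by a manifold carrying a natural measure (here $(\Gh,\munm)$), yields Sobolev-dimension and energy estimates for the pushforward measures $P_{V\#}\mu$ that are entirely analogous to the classical Euclidean estimates. So the strategy is not to prove anything from scratch about isotropic planes, but to transport the standard potential-theoretic arguments through Hovila's transversality.

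First I would recall the standard reduction. Given a Borel set $A$ with $0<\Ha^s(A)<\infty$, Frostman's lemma furnishes a Radon measure $\mu$ supported on $A$ with $\mu(B(x,r))\lesssim r^t$ for any $t<s$, so that the $t$-energy $I_t(\mu)=\iint |x-y|^{-t}\,d\mu x\,d\mu y$ is finite. For part~(1), when $s\leq m$, I would use the transversality estimate to bound the integral over $\Gh$ of the $t$-energy of the projected measure, $\int_{\Gh} I_t(P_{V\#}\mu)\,d\munm(V)\lesssim I_t(\mu)<\infty$ for every $t<s\leq m$; finiteness for $\munm$-a.e.\ $V$ gives $\dim P_VA\geq t$, and the reverse inequality $\dim P_VA\leq s$ is automatic since $P_V$ is Lipschitz. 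For part~(2), when $s>m$, I would take $t=m$ and instead control the $L^2$-norm of the density: transversality yields $\int_{\Gh}\|P_{V\#}\mu\|_{L^2}^2\,d\munm(V)\lesssim I_m(\mu)<\infty$, whence $P_{V\#}\mu\ll\Ha^m$ with an $L^2$ density for $\munm$-a.e.\ $V$, forcing $\Ha^m(P_VA)>0$. For part~(3), when $s>2m$, one pushes the same circle of ideas one derivative further, estimating a Sobolev norm $\int_{\Gh}\|P_{V\#}\mu\|_{H^{m/2+\epsilon}}^2\,d\munm(V)\lesssim I_{s'}(\mu)$ for suitable $s'<s$, which embeds the density into $C^0$ and yields a continuous, hence positive, density on an open set, so $\mathrm{Int}(P_VA)\neq\varnothing$.

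The remaining equivalence, in the critical case $s=m$ with $\Ha^m(A)<\infty$, is the rectifiability dichotomy. Here I would invoke that Hovila's transversality is strong enough to run the Besicovitch--Federer projection argument in the isotropic setting: if $A$ is purely $m$-unrectifiable, then $\Ha^m(P_VA)=0$ for $\munm$-a.e.\ $V$, while if $A$ has an $m$-rectifiable piece of positive measure, the projection of a $C^1$ piece has positive measure for a positive-$\munm$-measure set of $V$ (and in fact a.e.\ $V$ once one accounts for the measure of directions in $\Gh$ along which a given tangent plane projects degenerately, which is $\munm$-null by the isotropic coarea/transversality estimate). Combining the two directions gives the stated ``if and only if.''

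The main obstacle I anticipate is purely one of \emph{verifying applicability} rather than novel content: one must confirm that Hovila's transversality constants are uniform enough, and that the natural measure $\munm$ interacts correctly with the transversality inequalities, so that the Peres--Schlag conclusions (energy, $L^2$, and Sobolev estimates) genuinely hold with $\munm$ in place of $\gnm$. In particular, the Besicovitch--Federer part~(case $s=m$) is the delicate step, since the classical proof uses structure of the full Grassmannian and one must check that restricting to the singular submanifold $\Gh$, with its singular measure $\munm$, does not destroy the argument; the transversality framework is precisely what is designed to absorb this difficulty, so I expect it to go through once the estimates are quoted in the correct form.
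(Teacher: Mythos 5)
Your proposal takes essentially the same route as the paper: the paper does not reprove this theorem but presents it precisely as you do, as a corollary of Hovila's transversality of the family $\{P_V\}_{V\in\Gh}$ \cite{Hovila} fed through the Peres--Schlag machinery (energy estimates for part (1), $L^2$ densities for part (2), Sobolev embedding for part (3)), with the $s=m$ Besicovitch--Federer dichotomy likewise coming from the transversal-family version of that theorem established in Hovila's work. The estimates you outline are the correct standard ones, so there is nothing to add.
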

Parts (1) and (2) were originally proven in \cite{BFMT}. The later proof using transversality also provides dimension estimates for the subset of isotropic planes where the theorem fails. \\

In this work, the results will be analogous statements to Theorems  \ref{IntProj} and \ref{SLICE}. The following are the main results.
\begin{theorem}\label{isoprojint}
Let $A,B\subset \Rnn$ be Borel sets, and for $V\in \Gh$ let $P_V$ be the orthogonal projection onto $V$.
\begin{enumerate}
\item If $\dim A>m$ and $\dim B>m$, then
\[\munm (V\in \Gh:\Ha^m(P_V A \cap P_V B)>0)>0.\]
\item If $\dim A>2m$ and $\dim B>2m$, then
\[\munm (V\in \Gh: Int(P_V A \cap P_V B)\neq\varnothing)>0.\]
\item If $\dim A>m,\ \dim B\leq m$, and $\dim A+\dim B>2m$, then for all $\epsilon>0$,
\[\munm(V\in\Gh: \dim[ P_VA\cap  P_VB]>\dim B-\epsilon)>0.\]
\end{enumerate}
\end{theorem}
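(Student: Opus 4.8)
The plan is to transport the Fourier-analytic argument of Mattila and Orponen \cite{MattilaOrponen} to the isotropic setting, the one new ingredient being an integral-geometric identity for $\munm$. I would first prove, as a lemma, that for every nonnegative Borel $h$ on $\Rnn$,
\[
\int_{\Gh}\int_V h(\xi)\,d\Ha^m(\xi)\,d\munm(V) = c_{n,m}\int_{\Rnn} h(\xi)\,|\xi|^{m-2n}\,d\xi .
\]
The measure $E\mapsto\int_{\Gh}\Ha^m(E\cap V)\,d\munm(V)$ is $\mathcal U(n)$-invariant, since $\mathcal U(n)$ preserves $\omega$, the Euclidean structure, and $\munm$; as $\mathcal U(n)$ acts transitively on the spheres of $\Rnn=\Cn$ this measure is radial, and the homogeneity of $\Ha^m$ on the cones $V$ forces the radial weight to be $c_{n,m}|\xi|^{m-2n}$. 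Because $|\xi|^{m-2n}$ is a constant multiple of the Fourier transform of the Riesz kernel $|x|^{-m}$ on $\Rnn$, this identity converts projected inner products into mutual energies: writing $\muhat(\xi)=\widehat{P_{V\#}\mu}(\xi)$ for $\xi\in V$,
\[
\int_{\Gh}\int_V \muhat(\xi)\,\overline{\nuhat(\xi)}\,d\Ha^m(\xi)\,d\munm(V) = c'_{n,m}\,I_m(\mu,\nu),\qquad I_m(\mu,\nu):=\iint|x-y|^{-m}\,d\mu(x)\,d\nu(y).
\]

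For parts (1) and (2), choose compact $A_0\subset A$, $B_0\subset B$ of positive $\Ha^s$-, $\Ha^t$-measure with Frostman measures $\mu,\nu$ satisfying $I_s(\mu),I_t(\nu)<\infty$ for some $s,t>m$; compact support and $s,t>m$ give $I_m(\mu),I_m(\nu)<\infty$, hence $0<I_m(\mu,\nu)<\infty$ by Cauchy--Schwarz. By Theorem \ref{ISOPROJ}(2) (equivalently, by the case $\mu=\nu$ of the identity) the projections have $L^2$ densities $f_V,g_V$ for $\munm$-a.e.\ $V$, and the displayed identity yields $\int_{\Gh}\big(\int_V f_Vg_V\,d\Ha^m\big)\,d\munm(V)=c'_{n,m}I_m(\mu,\nu)>0$. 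Thus $\int_V f_Vg_V\,d\Ha^m>0$ on a set of $V$ of positive measure; for such $V$ the set $\{f_V>0\}\cap\{g_V>0\}$ has positive $\Ha^m$-measure and lies in $P_VA_0\cap P_VB_0\subset P_VA\cap P_VB$, giving (1). For (2), the same Cauchy--Schwarz estimate against $|\xi|^{m-2n}$ shows that $\dim A>2m$ (resp.\ $\dim B>2m$) makes $\muhat$ (resp.\ $\nuhat$) integrable over $\Ha^m$-almost every plane, so $f_V,g_V$ are continuous; then $\{f_V>0\},\{g_V>0\}$ are open and their intersection, nonempty on a positive-measure set of $V$ by the energy positivity above, has nonempty interior inside $P_VA\cap P_VB$.

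For part (3), with $s=\dim A>m$, $t=\dim B\le m$, and $s+t>2m$, the relevant object is the \emph{intersection measure} $\eta_V:=f_V\,P_{V\#}\nu$, supported in $\{f_V>0\}\cap\mathrm{spt}(P_{V\#}\nu)\subset P_VA\cap P_VB$. Pick $s'<s$, $t'<t$ with $s'<2m$ and $s'+t'>2m$, and Frostman measures with $I_{s'}(\mu),I_{t'}(\nu)<\infty$. A weight-splitting Cauchy--Schwarz applied to the identity shows $\int_{\Gh}\|\eta_V\|\,d\munm(V)=c'_{n,m}I_m(\mu,\nu)$ is finite and positive precisely because $s'+t'>2m$ and $s'<2m$, so $\eta_V\neq0$ on a positive-measure set of $V$. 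The decisive estimate is
\[
\int_{\Gh} I_{t'-\epsilon}(\eta_V)\,d\munm(V)<\infty ,
\]
which forces $I_{t'-\epsilon}(\eta_V)<\infty$ for $\munm$-a.e.\ $V$; intersecting with the positive-measure set where $\eta_V\neq0$ gives, for a positive-measure set of isotropic planes, a nontrivial measure on $P_VA\cap P_VB$ of finite $(t'-\epsilon)$-energy, whence $\dim(P_VA\cap P_VB)\ge t'-\epsilon$. Letting $t'\uparrow\dim B$ then yields the claim for each $\epsilon$.

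The main obstacle is this last energy estimate. Expanding $I_{t'-\epsilon}(\eta_V)$ and writing $f_V(P_Vy)$ through $\mu$ produces a fourfold integral over $(x,x',y,y')$ of $|P_V(y-y')|^{-(t'-\epsilon)}$ weighted by two projected point-mass kernels, and bounding its $\munm$-average requires a multi-point integral-geometric inequality for $\Gh$. I would derive this from the transversality of $\{P_V\}_{V\in\Gh}$ established by Hovila \cite{Hovila} in the sense of Peres--Schlag \cite{PeresSchlag}, in the same way the Euclidean multi-point bounds are used in \cite{MattilaOrponen}; the formal density and delta-kernel manipulations must be justified by mollifying $\mu,\nu$ and passing to the limit with energies controlled uniformly. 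The delicate points are the bookkeeping of admissible exponents $s',t'$ and the uniform-in-mollification energy bounds, both governed exactly by the hypotheses $\dim A>m$, $\dim B\le m$, and $\dim A+\dim B>2m$.
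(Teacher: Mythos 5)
Your integral-geometric identity and its proof (uniqueness of the $\mathcal{U}(n)$-invariant measure on $\mathbb{S}^{2n-1}$, then homogeneity of $\Ha^m$ on the cones $V$) are exactly the paper's Lemma \ref{desintegrate}, and your treatment of parts (1) and (2) coincides with the paper's proofs (Theorem \ref{caseA} and Theorem \ref{isoprojmeasures}): disintegration plus Plancherel on each $V$ identifies $\int_{\Gh}\int_V \mu_V\nu_V\,d\Ha^m\,d\munm(V)$ with a positive multiple of the mutual energy $I_m(\mu,\nu)$, which is finite and positive under the Frostman assumptions; and when both exponents exceed $2m$ the projected densities are continuous, so positive overlap upgrades to open overlap. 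These two parts are correct and are essentially the paper's argument.

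Part (3) has a genuine gap, located precisely at the step you yourself call the main obstacle. Your plan rests on the estimate $\int_{\Gh} I_{t'-\epsilon}(\eta_V)\,d\munm(V)<\infty$ for the \emph{full} intersection measure $\eta_V=f_V\,P_{V\#}\nu$, to be deduced from an unspecified multi-point integral-geometric inequality for $\Gh$ coming from transversality. That inequality is never formulated, let alone proved, and it is not how the Euclidean argument you are adapting actually runs: neither \cite{MattilaOrponen} nor this paper ever bounds the energy of the untruncated measure $\eta_V$ (in the relevant regime $m<s'<2m$ the density $f_V$ is only in $L^2(V)$, not bounded, and this is exactly what makes your fourfold integral intractable with the tools at hand). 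The paper's device is elementary and sidesteps the issue entirely: after the mollification/maximal-function/dominated-convergence argument (which you envision correctly, and which is where the paper invokes \cite[Theorem 17.3]{Mattila1}) shows that $\eta_V$ is nontrivial for a $\munm$-positive set of $V$, one \emph{truncates} the density, replacing $\eta_V$ by $\mathbbm{1}_{\{f_V\leq C_V\}}f_V\,P_{V\#}\nu$ with $C_V$ chosen so large that this measure is still nontrivial. Then
\[
I_t\bigl(\mathbbm{1}_{\{f_V\leq C_V\}}f_V\,P_{V\#}\nu\bigr)\leq C_V^2\, I_t(P_{V\#}\nu),
\]
and $I_t(P_{V\#}\nu)<\infty$ for $\munm$-a.e.\ $V$, because the disintegration formula already yields the single-measure average bound $\int_{\Gh}I_t(P_{V\#}\nu)\,d\munm\lesssim I_t(\nu)<\infty$ (this is \eqref{boundnu} in the paper). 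This produces, for a $\munm$-positive set of $V$, a nontrivial measure of finite $t$-energy supported on $P_VA\cap P_VB$, hence $\dim(P_VA\cap P_VB)\geq t$, and taking $t$ close to $\dim B$ finishes the proof with no $\epsilon$-loss in the energy step. So your outline closes once the conjectural multi-point estimate is replaced by this truncation trick; no integral-geometric input beyond Lemma \ref{desintegrate} is required.
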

And,
\begin{theorem}\label{isoslice}
 Let $A\subset\Rnn$ be a Borel set such that for some $m<s\leq n$, $0<\Ha^s(A)<\infty$. Then, there is a Borel set $B\subset\Rnn$ with $\dim B\leq m$, such that for all $x\in\Rnn\setminus B$,
 \[\munm(V\in \Gh: \dim [A\cap(\Vp+x) ]=s-m)>0.\]
\end{theorem}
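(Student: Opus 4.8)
The plan is to mimic the Euclidean slicing proof (Theorem \ref{SLICE}) but work with the isotropic measure $\munm$ and the $\mathcal{U}(n)$-invariant structure of $\Gh$ in place of the $\mathcal{O}(2n)$-invariant structure of $G(2n,m)$. The essential point is to bound, from above, the energy integral of a suitably restricted measure and then integrate against $\munm$.

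\begin{proof}[Proof sketch]
Fix $m<s\leq n$ and a Borel set $A$ with $0<\Ha^s(A)<\infty$. By a standard reduction I may assume $A$ is compact and carries a measure $\mu=\Ha^s\lfloor A$ with the Frostman bound $\mu(B(x,r))\lesssim r^s$. The strategy is to study the family of sliced measures obtained by disintegrating $\mu$ along the fibers of the isotropic projection $P_V$. Concretely, for $V\in\Gh$ and $v\in V$ one obtains (via the coarea/disintegration formula for the Lipschitz map $P_V$) a family of measures $\mu_{V,v}$ supported on $A\cap(\Vp+v)$ such that $\int_V \mu_{V,v}\,d\Ha^m(v)=\mu$ and $\mu_{V,v}(\Rnn)=$ the density of the pushforward $P_{V\sharp}\mu$ at $v$. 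The aim is to show that for a positive $\munm$-measure set of $V$, and for positively many slice-points $v=P_Vx$, the measure $\mu_{V,v}$ has finite $t$-energy for every $t<s-m$, giving $\dim[A\cap(\Vp+v)]\geq s-m$; the matching upper bound $\dim[A\cap(\Vp+x)]\leq s-m$ holds for $\munm$-a.e.\ $V$ and a.e.\ $x$ by a direct Fubini argument using the Frostman condition and the fact that $P_V$ is Lipschitz.

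The analytic core is the following energy estimate. For $t<s-m$ I would consider the quantity
\begin{equation}
I=\int_{\Gh}\int_{\Rnn}\int_{\Rnn}\frac{d\mu(x)\,d\mu(y)\,d\munm(V)}{|P_V x-P_V y|^{m+t}\,}\cdot\mathbbm{1}_{\{P_Vx\ne P_Vy\}},
\end{equation}
and show $I<\infty$. The decisive ingredient is Hovila's transversality of the isotropic projection family from Theorem \ref{ISOPROJ}: transversality in the sense of Peres--Schlag yields a bound of the form
\begin{equation}
\munm\bigl(V\in\Gh:\ |P_V x-P_V y|\le\delta|x-y|\bigr)\lesssim\delta^{m}
\end{equation}
uniformly in $x\ne y$, which is exactly the decay needed to estimate the inner $\munm$-integral of $|P_Vx-P_Vy|^{-(m+t)}$ by a constant multiple of $|x-y|^{-(m+t)}$. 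Feeding this into $I$ reduces the problem to the finiteness of $\int\int|x-y|^{-(m+t)}\,d\mu(x)\,d\mu(y)$, i.e.\ to the $(m+t)$-energy of $\mu$, which is finite since $m+t<s$ and $\mu$ satisfies the $s$-Frostman bound. Thus $I<\infty$.

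From $I<\infty$ one extracts the conclusion in the usual way. Disintegrating along the slices converts the finiteness of $I$ into the statement that the slice measures $\mu_{V,v}$ have finite $t$-energy for $\munm\times\Ha^m$-almost every $(V,v)$; since $\mu_{V,v}$ is nontrivial exactly when $v$ lies in the support of the density of $P_{V\sharp}\mu$, and that density is positive on a set of positive $\Ha^m$-measure (because $\Ha^m(P_VA)>0$ for $\munm$-a.e.\ $V$ by Theorem \ref{ISOPROJ}(2)), one concludes that for a positive $\munm$-measure set of $V$ there are positively many slices of dimension at least $s-m$, hence exactly $s-m$ by the upper bound. Finally, to pass from ``a.e.\ slice'' to ``every $x$ outside a small exceptional set,'' I would run a Fubini argument on the parameter $(V,x)\in\Gh\times\Rnn$: the set of bad $x$, namely those $x$ for which the slice through $x$ fails for $\munm$-almost all $V$, is shown to have dimension at most $m$ by comparing the $\munm\times\Ha^{s}$ measure of the bad locus with the preceding energy bound. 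The main obstacle I anticipate is precisely the interplay between the singular measure $\munm$ and the disintegration: one must verify that the coarea factorization of $\mu$ through $P_V$, together with Hovila's transversality constant, survives uniformly as $V$ ranges over the lower-dimensional submanifold $\Gh\subset G(2n,m)$, rather than over all of $G(2n,m)$ where the classical Mattila--Orponen argument lives.
\end{proof}
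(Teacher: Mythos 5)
Your proposal has two genuine gaps, and the first one is fatal to the analytic core as written. The quantity $I$ you propose is infinite for every $t>0$: the transversality bound \eqref{mubydelta} gives $\munm(\{V:|P_V(x-y)|\le\rho\})\lesssim\rho^m|x-y|^{-m}$, and this exponent $m$ is \emph{critical} -- it can only control kernels $|P_V(x-y)|^{-\sigma}$ with $\sigma<m$. For your exponent $\sigma=m+t>m$, the layer-cake computation gives, for fixed $x\ne y$,
\[
\int_{\Gh}|P_V(x-y)|^{-(m+t)}\,d\munm(V)\,\approx\,\int_0\rho^{-(m+t)}\,\rho^{m-1}|x-y|^{-m}\,d\rho=\infty,
\]
since the distributional bound is sharp (the zero set $\{V:P_V(x-y)=0\}$ is a positive-$\munm$-density submanifold around which the measure scales like $\rho^m$); the indicator $\mathbbm{1}_{\{P_Vx\ne P_Vy\}}$ does not remove the divergence. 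Moreover, even if $I$ were finite it would not control slice energies: the $t$-energy of $\mu_{\Vp+v}$ integrates $|x-y|^{-t}$ over pairs lying in the \emph{same} fiber, i.e.\ precisely the pairs with $P_Vx=P_Vy$ that your indicator excludes. The correct core estimate (the paper's Lemma \ref{10.7analogue}) keeps the Euclidean kernel $|x-y|^{-(s-m)}$ and brings in the projections only through the normalized restriction to $\{|P_V(x-y)|\le\delta\}$, so that the projected distance appears with exponent exactly $m$, matching \eqref{mubydelta}; this yields $\int_{\Gh}\int_V I_{s-m}(\mu_{\Vp+v})\,d\Ha^m\,d\munm\lesssim I_s(\mu)$, which is what actually produces slices of dimension $\ge s-m$ for $\Ha^s\times\munm$-a.e.\ $(x,V)$ (Theorem \ref{InSlice}).

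The second gap is the passage from the a.e.\ statement to the exceptional set of dimension $\le m$, which you describe only as ``comparing the $\munm\times\Ha^s$ measure of the bad locus.'' Any such measure comparison can only yield conclusions for $\mu$-a.e.\ or Lebesgue-a.e.\ $x$, which leaves a null bad set of possibly full dimension; the theorem demands failure only on a set of dimension $\le m$. The paper's mechanism is different and is the real point of the proof: define $B$ as the set of bad centers, suppose $\dim B>m$, and put a Frostman measure $\nu\in\Ma(B)$ with $I_m(\nu)<\infty$. The mutual-energy computation \eqref{L1comp} (Plancherel plus the disintegration formula, Lemma \ref{desintegrate} -- i.e.\ part (1) of Theorem \ref{isoprojint}) shows that for a $\munm$-positive set of $V$ the $L^2$ densities $\mu_V,\nu_V$ are simultaneously positive on a set of positive $\Ha^m$-measure; the sliced-measure identities \eqref{ProjSlice} and \eqref{equal} then produce a single fiber $\Vp+v$ through a point of $\operatorname{spt}\mu$ where the slice has dimension $\ge s-m$ and through a point of $B$ where it has dimension $<s-m$, a contradiction. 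This intersection-of-projections ingredient is absent from your sketch. Finally, note the paper's upper bound (Lemma \ref{allupperbound}) is proved for \emph{every} $x$, not a.e.\ $x$ as you assert; your weaker version would again leave an uncontrolled exceptional set.
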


Since the $\{P_V:V\in\Gh\}$ forms a transversal family, it is not unexpected that results concerning the family of all orthogonal projections $\{P_V:V\in G(n,m)\}$, carry over to the isotropic case. This work is a confirmation of this intuition, and many of the proof are very simple adaptations of the arguments used in \cite{MattilaOrponen}. These results, however, let us obtain similar results in the context of the Heisenberg group $\Hn$. As noted by P. Mattila, R. Serapioni and F. Serra-Cassano in \cite{MaSeFr} and later used by Z. Balogh, K. F\"assler, P. Mattila, and J.T. Tyson in  \cite{BFMT}, the manifolds $\Gh$ play a crucial role in the horizontal geometry of the Heisenberg group. It is therefore not surprising that the results obtained for $\Gh$ in this paper have direct analogues in the Heisenberg group. For instance,

\begin{theorem}\label{BFMT}
Let $A,B\subset\Hn$ be Borel sets, let $\V$ denote the horizontal subgroup corresponding to the isotropic plane $V\in\Gh$, and let $P_{\V}$ denote the homogeneous projection onto $\V$.
\begin{enumerate}
\item If $\dim A,\dim B>m+2$ then,
\[\munm(V\in\Gh:\Ha^m(P_{\V}A\cap P_{\V}B)>0)>0.\]
\item If $\dim A,\dim B>2m+2$ then,
\[\munm(V\in \Gh:Int(P_{\V}A\cap P_{\V}B)\neq\varnothing)>0.\]
\item If  $\dim A> m+2,\ \dim B\leq m+2$ but $\dim A + \dim B>2m+4$, then
\[ \munm(V\in \Gh: \dim[ P_\V A\cap  P_\V B]>\dim B -2 -\epsilon)>0.\]
\end{enumerate}
\end{theorem}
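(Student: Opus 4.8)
The plan is to transfer Theorem~\ref{isoprojint} from $\Rnn$ to $\Hn$ through the dictionary between horizontal subgroups and isotropic planes. Throughout, write $\pi:\Hn\to\Rnn$ for the vertical projection $\pi(z,t)=z$, and let $\dim_E$ and $\dim_H$ denote Hausdorff dimension with respect to the Euclidean and the Carnot--Carath\'eodory metric, respectively. I would isolate two ingredients: (i) an \emph{algebraic} identity relating the homogeneous projection $P_\V$ to the Euclidean isotropic projection $P_V$, and (ii) a \emph{metric} dimension comparison between a set in $\Hn$ and its vertical shadow $\pi(X)\subset\Rnn$. Once both are available, each part of Theorem~\ref{BFMT} reduces to the corresponding part of Theorem~\ref{isoprojint} applied to $\pi(A),\pi(B)$.

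For (i), the point is that $V\in\Gh$ being isotropic is exactly the condition making $\V=V\times\{0\}$ a (necessarily abelian) homogeneous subgroup, with complementary vertical subgroup $\mathbb{W}=\Vp\times\R$, where $\Vp$ is the Euclidean orthogonal complement of $V$ in $\Rnn$. Factoring $(z,t)=(v,0)\cdot(w,s)$ with $(v,0)\in\V$ and $(w,s)\in\mathbb{W}$ forces $z=v+w$ with $v\in V$, $w\in\Vp$, which is just the Euclidean orthogonal decomposition of $z$; the vertical term involving $\omega(v,w)$ is absorbed into the $\mathbb{W}$-factor and never reaches the $\V$-factor. Hence $P_\V(z,t)=(P_Vz,0)$, i.e. $P_\V=\iota\circ P_V\circ\pi$, where $\iota:V\to\V$ is the identity embedding. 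Since $\iota$ is a Euclidean isometry that is also bi-Lipschitz for the Carnot--Carath\'eodory metric on the horizontal subgroup $\V$, every conclusion about $P_\V A$ and $P_\V B$ is equivalent to the corresponding one about $P_V\pi(A)$ and $P_V\pi(B)$: positivity of $\Ha^m$, nonemptiness of interior, and the value of Hausdorff dimension are all preserved.

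For (ii), I would record $\dim_H X\le\dim_E\pi(X)+2$ for every $X\subset\Hn$, obtained by combining the projection estimate $\dim_E X\le\dim_E\pi(X)+1$ (the fibers of $\pi$ are one-dimensional) with the Euclidean--Heisenberg comparison $\dim_H X\le\dim_E X+1$ of Balogh--Rickly--Serra~Cassano. Thus $\dim_H A>m+2$ gives $\dim_E\pi(A)>m$, and $\dim_H A>2m+2$ gives $\dim_E\pi(A)>2m$, and likewise for $B$. Parts (1) and (2) then follow immediately: apply Theorem~\ref{isoprojint}(1) (resp.\ (2)) to $\pi(A),\pi(B)$ to get a positive $\munm$-measure set of $V\in\Gh$ with $\Ha^m(P_V\pi A\cap P_V\pi B)>0$ (resp.\ $Int(P_V\pi A\cap P_V\pi B)\neq\varnothing$), and push forward by $\iota$. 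Since $\pi(A)$ need not be Borel, I would first reduce to compact subsets $A'\subset A$, $B'\subset B$ with dimensions arbitrarily close to those of $A,B$, so that $\pi(A'),\pi(B')$ are compact and Theorem~\ref{isoprojint} applies verbatim.

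Part (3) needs a short case analysis, because $\dim_H B\le m+2$ does \emph{not} force $\dim_E\pi(B)\le m$. If $\dim_E\pi(B)>m$, then also $\dim_E\pi(A)>m$, and Theorem~\ref{isoprojint}(1) yields $\dim_H[P_\V A\cap P_\V B]=\dim_E[P_V\pi A\cap P_V\pi B]=m$ on a positive-measure set, which already exceeds $\dim_H B-2-\epsilon\le m-\epsilon$. If instead $\dim_E\pi(B)\le m$, I would invoke Theorem~\ref{isoprojint}(3): its hypotheses hold since $\dim_E\pi(A)>m$ and, applying (ii) twice, $\dim_E\pi(A)+\dim_E\pi(B)\ge\dim_H A+\dim_H B-4>2m$, while the conclusion $\dim_E[P_V\pi A\cap P_V\pi B]>\dim_E\pi(B)-\epsilon\ge\dim_H B-2-\epsilon$ transfers through $\iota$ to the desired estimate. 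The step deserving the most care, and the main obstacle, is ingredient (i): one must verify that the isotropy of $V$ collapses the homogeneous projection exactly to $\iota\circ P_V\circ\pi$, with no vertical contribution, so that the measure and dimension of $P_\V A$ are genuinely governed by the Euclidean shadow $\pi(A)$ rather than by the finer vertical structure of $A$.
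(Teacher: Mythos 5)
Your proposal follows the same route as the paper's own proof: factor the homogeneous projection as $P_{\V}=P_V\circ\pi$ (the paper records exactly this identity, together with $d_{\Hn}\lfloor_{\V}=d_{E}\lfloor_{\V}$, so that measures, interiors and dimensions on $\V$ may be computed Euclideanly), invoke the comparison $\dim_E\pi(D)\geq\dim D-2$, and apply Theorem \ref{isoprojint} to $\pi(A),\pi(B)$; your derivation of that comparison from $\dim_E X\leq\dim_E\pi(X)+1$ plus the Balogh--Rickly--Serra Cassano bound $\dim X\leq\dim_E X+1$ is a harmless substitute for the paper's citation of \cite{BFMT}. Where you genuinely add value is part (3). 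The printed proof says only that one should ``simply apply'' the Euclidean theorem to $\pi(A),\pi(B)$ (and in fact cites Theorem \ref{isoslice} there, an evident typo for Theorem \ref{isoprojint}), but, as you observe, the hypothesis $\dim B\leq m+2$ yields only $\dim_E\pi(B)\leq\dim B\leq m+2$, not $\dim_E\pi(B)\leq m$, so the hypotheses of Theorem \ref{isoprojint}(3) can fail for the shadows. Your case split --- invoking part (1) when $\dim_E\pi(B)>m$, where positivity of $\Ha^m$ forces the intersection to have dimension exactly $m>\dim B-2-\epsilon$, and part (3) when $\dim_E\pi(B)\leq m$, with the sum condition checked via the comparison applied to both sets --- is precisely what is needed to make the reduction rigorous, and it is absent from the paper. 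Likewise, your reduction to compact $A'\subset A$, $B'\subset B$ (so that the shadows are compact rather than merely Suslin) handles a measurability point the paper passes over silently. In short: correct, same strategy, and more careful than the paper at the one step where care is actually required.
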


In  \cite{BFMT}, the authors were able to obtain a slicing theorem for $\Hn$ along the lines of Theorem \ref{SLICE}. In their paper, they consider slices by right cosets of vertical planes and obtained,

\begin{theorem}\label{FubinatedHslice}
Let $A\subset\Hn$ be a Borel set with $\dim A=s>m+2$ and such that $0<\Ha^{s}(A)<\infty$. Then
\[\Ha^m(u\in\V:\dim [A\cap(\V^{\perp}*u)]=s-m)>0,\text{ for }\munm-a.e. V\in\Gh.\]
\end{theorem}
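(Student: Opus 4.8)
The plan is to realize the slices $A\cap(\V^{\perp}*u)$ as the fibers of the homogeneous projection $P_{\V}:\Hn\to\V$ and to run the disintegration argument underlying the Euclidean isotropic slicing theorem (Theorem \ref{isoslice}) intrinsically in $\Hn$, feeding in the isotropic projection theorem (Theorem \ref{ISOPROJ}) through the homomorphism $\pi:\Hn\to\Rnn$ that kills the center. First I would record two elementary geometric facts. Because $\V^{\perp}$ is a vertical subgroup containing the center, each right coset is a full cylinder: writing $u=(w,0)$ with $w\in V$, one has $\V^{\perp}*u=(V^{\perp}+w)\times\R=P_{\V}^{-1}(u)$, so the slices are exactly the fibers of $P_{\V}$, and, the cylinder being saturated in the center direction,
\[\pi\big(A\cap(\V^{\perp}*u)\big)=\pi(A)\cap(V^{\perp}+w).\]
Since $V$ is isotropic, $\pi|_{\V}$ is an isometric identification of $\V$ with $V$, so $\Ha^m$ on $\V$ corresponds to $\Ha^m$ on $V$; it therefore suffices to produce, for $\munm$-a.e.\ $V$, a positive-$\Ha^m$ set of $w\in V$ whose slices have the correct dimension.

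Next I would fix a Frostman measure $\mu=\Ha^{s}\lfloor A$, finite and positive by hypothesis, which has finite Heisenberg $t$-energy for every $t<s$. The bookkeeping that drives the statement is that the horizontal and vertical subgroups have complementary homogeneous dimensions, $\dim_H\V=m$ and $\dim_H\V^{\perp}=2n-m+2$, summing to $Q=2n+2$; this is what produces the exponent $s-m$ for the slice, and on the Euclidean side the shift $\dim_E\pi(A)\ge s-2$ together with the thresholds $s>m+2$. Using $s>m$, the transversality estimate behind Theorem \ref{ISOPROJ}(2) gives, for $\munm$-a.e.\ $V$, that $P_{\V\#}\mu$ is absolutely continuous with respect to $\Ha^m\lfloor\V$ with an $L^2$ density. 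This is the ``a.e.\ $V$'' (Fubinated) input that replaces the exceptional-set formulation of Theorem \ref{isoslice}.

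The core is a coarea/disintegration along the fibers of $P_{\V}$. Writing $\mu=\int_{\V}\mu_u\,d(P_{\V\#}\mu)(u)$ with $\mu_u$ carried by the slice $\V^{\perp}*u$, I would establish the energy-slicing estimate
\[\int_{\V}I^{H}_{t-m}(\mu_u)\,d(P_{\V\#}\mu)(u)\lesssim I^{H}_{t}(\mu)<\infty\qquad(m<t<s),\]
where $I^{H}$ is the Heisenberg Riesz energy. This yields $\dim_H\mu_u\ge t-m$ for a set of $u$ of positive $P_{\V\#}\mu$-measure, hence of positive $\Ha^m$-measure by the absolute continuity above; letting $t\uparrow s$ gives the lower bound $\dim_H[A\cap(\V^{\perp}*u)]\ge s-m$. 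The matching upper bound $\dim_H[A\cap(\V^{\perp}*u)]\le s-m$ for $\Ha^m$-a.e.\ $u$ follows from the Heisenberg coarea inequality for the Lipschitz map $P_{\V}$ together with $\Ha^{s}(A)<\infty$.

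The main obstacle is the energy-slicing estimate, which is precisely where the non-abelian, weighted structure of $\Hn$ must be handled rather than copied from $\Rnn$. Two points need care. First, the homogeneous metric must be split along $\Hn=\V\cdot\V^{\perp}$ so that $d_H$ of two points in a common right coset is controlled by the intrinsic (vertical, weight-two) distance while their transverse separation is read off by $P_{\V}$; because the Carnot--Carath\'eodory metric is only left invariant, this comparison for right cosets involves conjugation by the horizontal factor $u$ and is genuinely nontrivial. It is exactly this step that supplies the vertical ``$+2$'' which a naive application of Theorem \ref{isoslice} to $\pi(A)$ would miss by two. Second, one must upgrade the conclusion of Theorem \ref{ISOPROJ}(2) from positivity of $\Ha^m(P_{\V}A)$ to absolute continuity with an integrable density for $\munm$-a.e.\ $V$, which the Peres--Schlag transversality framework provides but which must be invoked in its quantitative energy form.
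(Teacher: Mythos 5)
Your reductions at the outset are correct and useful: indeed $\V^{\perp}*u=(V^{\perp}+w)\times\R=P_{\V}^{-1}(u)$, the dimension condition on slices is saturated along fibers of $P_{\V}$, and $d_{\He}$ restricted to $\V$ is Euclidean, so it suffices to find a positive-$\Ha^m$ set of base points. (Note, for the record, that the paper itself does not prove this theorem --- it quotes it from \cite{BFMT} --- so the relevant comparison is with the machinery the paper does develop: Lemma \ref{10.7analogue}, Theorems \ref{InSlice} and \ref{InSliceH}, and the sliced-measure identities \eqref{SlicedIntegral}--\eqref{pushslice}.) The genuine gap is your central claim, the fixed-$V$ energy-slicing estimate
\[
\int_{\V}I^{H}_{t-m}(\mu_u)\,d(P_{\V\#}\mu)(u)\lesssim I^{H}_{t}(\mu),
\]
which is false as stated, and no care with the splitting $\Hn=\V\cdot\V^{\perp}$ or with conjugation along right cosets can rescue it. The obstruction: take $\mu$ carried by a ``graph'' over $\V$, i.e.\ a set meeting each coset $\V^{\perp}*u$ in exactly one point. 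Then every disintegration measure $\mu_u$ is an atom, so $I^{H}_{t-m}(\mu_u)=\infty$ for every $u$ in $spt(P_{\V\#}\mu)$ and the left side is infinite, while $I^H_t(\mu)$ can be finite with $t>m$ (and $P_{\V\#}\mu$ can even be absolutely continuous: in the Euclidean model with $m=1$, the lift of Lebesgue measure to the graph of Brownian motion has finite $t$-energy for all $t<3/2$, Lebesgue projection to the time axis, and atomic slices). Your proposed method of proof is blind to the direction $V$, so it would ``prove'' the same false Euclidean statement; this is exactly why every known proof of such slicing estimates --- Lemma \ref{10.7analogue} here, \cite[Theorem 10.7]{mattila2}, and the Heisenberg version in \cite{BFMT} --- establishes the estimate only after integrating over $V\in\Gh$ against $\munm$, using the decay bound \eqref{mubydelta} (or its Heisenberg analogue) for $\munm(\{V:|P_V(x-y)|\leq\delta\})$ to convert pair interactions into an energy. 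The finiteness of your inner integral for $\munm$-a.e.\ $V$ is then extracted by Fubini/Chebyshev, with a constant depending on $V$; it cannot be obtained $V$ by $V$. Since your proposal invokes $\munm$ only through absolute continuity of projections and never in the core estimate, the mechanism that excludes the graph-type degeneracy (which happens precisely on a $\munm$-null set of directions) is missing.

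Two smaller points. First, the absolute continuity of $P_{\V\#}\mu$ requires the Frostman exponent $s>m+2$, not ``$s>m$'' as you write: since $P_{\V}=P_V\circ\pi$ and $\pi$ can drop dimension by $2$, one needs $I^E_m(\pi_{\#}\mu)<\infty$, which is what Lemma \ref{MutualH} (via \cite[Proposition 6.1]{BFMT}) extracts from the hypothesis $s>m+2$. Second, once the averaged estimate (or Theorem \ref{InSliceH}) is available, you do not need to redo any energy analysis: by Theorem \ref{InSliceH} and Tonelli, for $\munm$-a.e.\ $V$ the $P_{\V}$-saturated set $\{p:\dim[A\cap(\V^{\perp}*p)]=s-m\}$ has full $\Ha^s\lfloor_A$-measure, hence its base $E_V\subset\V$ has positive $P_{\V\#}\mu$-measure; combining this with $P_{\V\#}\mu\ll\Ha^m$ gives $\Ha^m(E_V)>0$, which is the theorem. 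That short derivation, together with the averaged energy estimate behind Theorem \ref{InSliceH}, is how the result is actually proved in \cite{BFMT}.
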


Using the results for $\Gh$ obtained in this paper, it is possible to prove very similar result to this last theorem and further obtain a dimension bound for the set of exceptions to this slicing result.

\begin{theorem}\label{sliceinh}
Let $s\in\R$ be such that $m+2<s\leq 2n+2$, and $A\subset\Hn$ be a Borel set with $0<\Ha^{s}(A)<\infty$. Then, there is a Borel set $B\subset\Hn$ with $\dim B\leq m+2$, such that for all $p\in\Hn\setminus B$,
\begin{equation}\label{horislice}
\munm(V\in\Gh:\dim [A\cap(\V^{\perp}*p)]=s-m)>0.
\end{equation}
\end{theorem}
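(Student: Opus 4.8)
The plan is to reduce the statement to the Euclidean isotropic slicing result, Theorem \ref{isoslice}, by means of the vertical projection $\pi\colon\Hn\to\Rnn$ that forgets the central coordinate. The key geometric observation is that a right coset of a vertical subgroup is a full $\pi$-preimage: writing $\V^{\perp}=\Vp\times\R$ for the vertical subgroup complementary to the horizontal subgroup $\V$ associated to $V\in\Gh$, a direct computation with the group law gives $\V^{\perp}*p=\pi^{-1}(\Vp+\pi(p))$. Consequently
\[
\pi\bigl(A\cap(\V^{\perp}*p)\bigr)=\pi(A)\cap(\Vp+\pi(p)),
\]
so every Heisenberg slice of $A$ projects exactly onto a Euclidean slice of $\pi(A)$ by the affine plane $\Vp+\pi(p)$, which is precisely the kind of slice controlled by Theorem \ref{isoslice}.

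Next I would carry out the dimension bookkeeping. Since $\pi$ is $1$-Lipschitz from the Kor\'anyi metric to the Euclidean metric and its fibres are vertical lines of Heisenberg dimension $2$, passing from $\Hn$ to $\Rnn$ should cost exactly two units of dimension in the vertical direction. Concretely, I would fix a Frostman measure $\mu$ on $A$ with $\mu(B(x,r))\lesssim r^{s}$ and push it forward to $\pi_{*}\mu$; estimating $\pi_{*}\mu$ of a Euclidean ball by covering the corresponding vertical cylinder shows $\pi_{*}\mu(B(z,r))\lesssim r^{s-2}$, so $\pi(A)$ carries a Frostman measure of exponent $s-2$, and the inequality $m<s-2$ is exactly the hypothesis $s>m+2$. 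Applying Theorem \ref{isoslice} to a compact piece of $\pi(A)$ of dimension $s-2$ then yields a Euclidean exceptional set $B'\subset\Rnn$ with $\dim B'\le m$ such that for every $z\notin B'$ a positive $\munm$-measure set of planes $V$ satisfies $\dim[\pi(A)\cap(\Vp+z)]=(s-2)-m$. Setting $B=\pi^{-1}(B')$, the vertical cylinder over $B'$ has Heisenberg dimension at most $\dim B'+2\le m+2$, giving the required bound on the exceptional set; and for $p\notin B$ we have $\pi(p)\notin B'$, so a positive $\munm$-measure set of good planes is available.

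It remains to upgrade the Euclidean conclusion to the exact Heisenberg slice dimension $s-m$. The upper bound $\dim[A\cap(\V^{\perp}*p)]\le s-m$ follows from a covering argument: the slice lies in a vertical plane, its $\pi$-image has dimension $s-m-2$ by the previous step, and the fibres add at most $2$. The lower bound is the heart of the matter and the step I expect to be the main obstacle: one must show that over a full-dimensional subset of the Euclidean slice the fibres $A\cap\pi^{-1}(z)\cap(\V^{\perp}*p)$ genuinely contribute Heisenberg dimension $2$, so that a coarea/Fubini inequality yields $\dim[A\cap(\V^{\perp}*p)]\ge(s-m-2)+2=s-m$. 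This is where the finiteness $\Ha^{s}(A)<\infty$ and the tightness of the Heisenberg--Euclidean dimension comparison must enter: the former prevents the fibres from being too large and forces the $\pi$-image to have the full exponent $s-2$ rather than more, while a disintegration of $\mu$ along $\pi$ supplies the vertical mass needed for the lower bound. Finally I would verify that the Borel regularity required to invoke Theorem \ref{isoslice} survives the passage through $\pi$, and, if the stated range of that theorem is to be respected literally, treat the regime $s>n+2$ separately, since there the projected dimension $s-2$ exceeds $n$.
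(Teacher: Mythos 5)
Your reduction is set up correctly as far as it goes: the identity $\V^{\perp}*p=\pi^{-1}(\Vp+\pi(p))$ is right, the Frostman exponent does drop by exactly $2$ under $\pi_{\#}$ (this is the covering argument via \cite[Lemma 6.5]{BFMT} that the paper itself uses inside Lemma \ref{MutualH}), and $B=\pi^{-1}(B')$ does have Heisenberg dimension at most $m+2$. But the step you flag as ``the main obstacle'' is a genuine gap, and it is not repairable along the lines you propose. The inequality you need, $\dim[A\cap(\V^{\perp}*p)]\geq\dim_E[\pi(A)\cap(\Vp+\pi(p))]+2$, is false in general: the fibres of $\pi$ restricted to $A$ need not carry any dimension at all. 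For instance $A$ may be a graph over a subset of $\Rnn$, e.g.\ a subset of the hyperplane $\{t=0\}$ (which has Heisenberg dimension $2n+1$, so sets $A\subset\{t=0\}$ with $0<\Ha^s(A)<\infty$ and $s>m+2$ do exist); then every fibre $A\cap\pi^{-1}(z)$ is a single point, and no coarea/Fubini mechanism can manufacture the extra $2$ — yet the theorem still holds for such $A$, which shows your mechanism is not the route to it. What is true is only the opposite inequality $\dim D\leq\dim_E\pi(D)+2$, which yields upper bounds, never lower bounds; the defect lies anywhere in $[0,2]$ and depends on how $A$ sits relative to the vertical direction, not on fibre content. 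A second, smaller problem: to invoke Theorem \ref{isoslice} at exponent $s-2$ you need $0<\Ha^{s-2}(\pi(A))<\infty$, but the push-forward estimate only gives $\dim_E\pi(A)\geq s-2$; the projection can have strictly larger dimension, and slicing a compact piece of exponent $s'>s-2$ produces Euclidean slices of dimension $s'-m$, which your bookkeeping cannot convert into the exact value $s-m$.

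The paper never attempts to recover the Heisenberg slice dimension from Euclidean slices of $\pi(A)$. The almost-everywhere lower bound $\dim[A\cap(\V^{\perp}*p)]\geq s-m$ for $\Ha^s\times\munm$-a.e.\ $(p,V)$ is taken as input from the intrinsic Heisenberg slicing theorem of \cite{BFMT}, restated as Theorem \ref{InSliceH}, which is proved with sliced measures and energies intrinsic to $\Hn$. The actual content of the paper's proof is the exceptional-set bound, by contradiction: define $B$ as the set of points $p$ for which \eqref{horislice} fails, suppose $\dim B>\sigma>m+2$, take Frostman measures $\mu$ on $A$ and $\nu$ on $B$, and use Lemma \ref{MutualH} (this is where $\pi$ enters, but only to prove finiteness of the Euclidean energies of $\pi_{\#}\mu$ and $\pi_{\#}\nu$, hence a set of positive $\Ha^m$-measure in $\V$ where both projected densities $\mu_V,\nu_V$ are positive) together with the sliced-measure identities \eqref{totalmassh} and \eqref{pushslice} to produce $p\in spt(\mu)$ and $q\in spt(\nu)$ lying on the same coset, $P_{\V}(p)=P_{\V}(q)$, contradicting the definition of $B$. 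If you want to salvage your write-up, replace the project-and-add-two step by an appeal to Theorem \ref{InSliceH} and run this two-measure contradiction argument; your observations about $\pi$ then reappear, but only in the auxiliary role they play in Lemma \ref{MutualH}.
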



\section{Preliminaries}


\subsection{Hausdorff measure and dimension}
Given a metric space $(X,d)$ and a subset $A\subset X$ we will denote by $\Ha^s(A)$ the Hausdorff $s$-measure of A. That is,
\[\Ha^s(A)=\liminf_{\delta\to 0}\{\sum \text{diam}(E_j)^s: A\subset\bigcup_jE_j, \text{diam}(E_j)\leq\delta\}.\]
The Hausdorff dimension of $A$ is defined as  $\dim A=\inf\{s>0:\Ha^s(A)=0\}=\sup\{s>0:\Ha^s(A)=\infty \}$.
Aside from the definition, there are other computational tools to estimate Hausdorff dimension of sets. One of the most useful ones is Frostman's lemma, proven by Otto Frostman as part of his dissertation.

\begin{dfn}
A positive Borel measure $\mu$ on $A\subset X$ is said to be a Frostman $s$-measure if $\mu(B(a,r))\leq r^s$, for all $a\in A$ and $r>0$.
\end{dfn}
\begin{theorem}[Frostman lemma]
Let $(X,d)$ be a complete, separable metric space and $A$ a Suslin subset of $X$. Then the following are equivalent,
\begin{enumerate}
\item $\Ha^s(A)>0$.
\item There is a Frostman $s$-measure $\mu$ supported on $A$ such that $\mu(A)>0$.
\end{enumerate}
\end{theorem}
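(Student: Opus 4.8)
The stated equivalence splits into an elementary direction and a substantial one. I would first dispose of the implication $(2)\Rightarrow(1)$ by the \emph{mass distribution principle}. Suppose $\mu$ is a Frostman $s$-measure on $A$ with $\mu(A)>0$, and let $\{E_j\}$ be any countable cover of $A$ with $\mathrm{diam}(E_j)\le\delta$. For each $j$ with $E_j\cap A\neq\varnothing$ pick $a_j\in E_j\cap A$; then $E_j\subset \overline{B}(a_j,\mathrm{diam}(E_j))$, so that $\mu(E_j)=\mu(E_j\cap A)\le (\mathrm{diam}(E_j))^s$ (after absorbing the open-versus-closed-ball discrepancy into the definition, which one may take with closed balls without loss of generality). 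Summing over $j$ gives $\mu(A)\le\sum_j\mu(E_j)\le\sum_j(\mathrm{diam}(E_j))^s$; taking the infimum over all such covers and letting $\delta\to0$ yields $\Ha^s(A)\ge\mu(A)>0$.

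The reverse implication $(1)\Rightarrow(2)$ is the heart of the matter, and I would carry it out in two stages: reduce to a compact set, then build the measure there. For the reduction, recall that $\Ha^s(A)>0$ if and only if the Hausdorff content $\Ha^s_\infty(A)>0$, since the two set functions have the same null sets. The content $\Ha^s_\infty$ is a Choquet capacity on the complete, separable metric space $X$, and every Suslin set is capacitable; hence
\[\Ha^s_\infty(A)=\sup\{\Ha^s_\infty(K): K\subset A \text{ compact}\}.\]
Consequently there is a compact $K\subset A$ with $\Ha^s_\infty(K)>0$, equivalently $\Ha^s(K)>0$. Since a Frostman $s$-measure carried by $K$ is automatically a Frostman $s$-measure on $A$ (extend it by zero) with $\mu(A)\ge\mu(K)>0$, it suffices to produce the measure for the compact set $K$.

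For the construction on $K$ I would use a nested, dyadic-type decomposition into ``cubes'' of diameters comparable to $2^{-k}$ --- the usual dyadic cubes when $X=\Rnn$, and the Christ--David cubes (or, in full generality, Howroyd's net argument) in an abstract complete separable metric space. One defines finite measures $\mu_k$ supported near $K$ by a capped mass distribution: start by placing mass on the generation-$k$ cubes meeting $K$, then pass to coarser generations, at each step capping the mass of a cube $Q$ at $(\mathrm{diam}\,Q)^s$ and redistributing the surplus downward. This guarantees $\mu_k(Q)\le(\mathrm{diam}\,Q)^s$ for every cube $Q$ of every generation $\le k$, while the cubes at which the cap is attained form a cover of $K$, so that the total mass of $\mu_k$ is bounded below by a dimensional multiple of $\Ha^s_\infty(K)>0$. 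Since the $\mu_k$ are uniformly bounded and concentrate on the compact set $K$, a weak-$*$ limit $\mu$ exists; it inherits the estimate $\mu(B(x,r))\le C\,r^s$ for a dimensional constant $C$ and satisfies $\mu(K)>0$. Rescaling by $1/C$ produces the required Frostman $s$-measure.

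The principal obstacle is the passage from Suslin to compact sets: it rests on the capacitability of Hausdorff content, a genuinely descriptive-set-theoretic input (Choquet's capacitability theorem), rather than on any soft measure-theoretic fact. A secondary difficulty is the cube construction in a bare metric space, where one must either invoke Christ--David cubes or replace the nesting by Howroyd's direct net construction; over $\Rnn$ and the Heisenberg group --- the spaces relevant to this paper --- ordinary dyadic cubes suffice and this subtlety disappears.
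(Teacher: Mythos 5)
Your proposal is correct and takes essentially the same route as the paper, which does not prove Frostman's lemma itself but cites exactly the two ingredients you supply: the capped, nested mass-distribution construction on dyadic cubes for compact sets (Mattila's Theorem 8.8, your second stage, including the weak-$*$ limit and the lower mass bound via the cover by cubes attaining the cap $(\mathrm{diam}\,Q)^s$) and the reduction of the Suslin case to the compact case via capacitability of the content $\Ha^s_\infty$ (Carleson's treatment, your first stage). Your easy direction via the mass distribution principle and your caveat that in a bare complete separable metric space one must replace dyadic cubes by Christ--David cubes or Howroyd's net argument are likewise accurate, and immaterial for the paper's applications in $\Rnn$ and $\Hn$.
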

A constructive proof of Frostman's lemma for compact sets is given in \cite[Theorem 8.8]{mattila2}. The more general case of Suslin sets was originally treated by Carleson in \cite{Carleson}. Frostman's lemma can also be used to obtain another powerful tool which allows the computation of Hausdorff dimension of a set in terms of energies of measures supported on that set. This is sometimes referred to as the ``energy version" of Frostman's lemma.
\begin{corollary}\label{FrostEner}
If $A$ is a Suslin subset of $(X,d)$ and we denote by $\Ma(A)$ the set of all compactly supported Radon measures on A, then

\[\dim A=\sup\{s\geq0:\exists \mu\in\Ma(A),\  s.t.\ I_s(\mu)<\infty \}.\]
Here \[I_s(\mu)=\int\int d(x,y)^{-s}d\mu(y)d\mu(x)\] denotes the $s$-energy of the measure $\mu$.
\end{corollary}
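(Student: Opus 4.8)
The plan is to prove the two inequalities $\dim A\le S$ and $S\le\dim A$ separately, where $S:=\sup\{s\ge0:\exists\,\mu\in\Ma(A),\ I_s(\mu)<\infty\}$. Frostman's lemma (the preceding theorem) will supply the first inequality and the mass distribution principle — i.e.\ the easy implication $(2)\Rightarrow(1)$ of that same lemma — the second.

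For $\dim A\le S$, I would fix $0<t<s<\dim A$. Then $\Ha^s(A)=\infty>0$, so Frostman's lemma produces a Frostman $s$-measure $\mu$ on $A$ with $\mu(A)>0$; restricting to a compact subset of positive measure (inner regularity, which only tightens the Frostman bound) I may assume $\mu\in\Ma(A)$ has bounded support and satisfies $\mu(B(x,r))\le r^s$. I would then estimate the potential $\int d(x,y)^{-t}\,d\mu(y)$ by rewriting it through its distribution function as $t\int_0^\infty \mu(B(x,r))\,r^{-t-1}\,dr$, bounding $\mu(B(x,r))$ by $r^s$ for small $r$ and by $\mu(A)$ for large $r$. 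Since $t<s$, both pieces converge and the inner integral is dominated by a constant $C$ independent of $x$, whence $I_t(\mu)\le C\,\mu(A)<\infty$ and $t\le S$. Letting $t\to s$ and then $s\to\dim A$ gives $\dim A\le S$.

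For $S\le\dim A$, suppose $\mu\in\Ma(A)$ is nonzero with $I_s(\mu)<\infty$; I aim to show $\dim A\ge s$. Writing $I_s(\mu)=\int U^s_\mu\,d\mu$ with $U^s_\mu(x)=\int d(x,y)^{-s}\,d\mu(y)$, the potential is finite for $\mu$-a.e.\ $x$. At such a point $\mu(\{x\})=0$, and dominated convergence forces $\int_{B(x,r)}d(x,y)^{-s}\,d\mu(y)\to0$ as $r\to0$; combined with the trivial estimate $r^{-s}\mu(B(x,r))\le\int_{B(x,r)}d(x,y)^{-s}\,d\mu(y)$, this yields $\lim_{r\to0}\mu(B(x,r))/r^s=0$ for $\mu$-a.e.\ $x$. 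I would then restrict to the sets $E_k=\{x:\mu(B(x,r))\le r^s\text{ for all }r\le 1/k\}$, which increase to a set of full $\mu$-measure, and fix $k$ so that $\nu:=\mu|_{E_k}$ is nonzero. A short triangle-inequality argument (using $B(a,r)\subset B(z,2r)$ whenever $z\in B(a,r)\cap E_k$, together with the boundedness of the support) upgrades the pointwise control into a uniform Frostman bound $\nu(B(a,r))\le C r^s$ valid for all centers $a$ and radii $r$. The mass distribution principle then gives $\Ha^s(A)\ge \nu(A)/C>0$, hence $\dim A\ge s$, and taking the supremum over admissible $s$ yields $S\le\dim A$.

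The two inequalities combine to the claimed identity. The main obstacle is the second direction: the hypothesis $I_s(\mu)<\infty$ only controls an average of the potential, so converting it into a genuine, uniform Frostman bound requires both the a.e.\ density computation and the restriction-and-enlargement step described above. Some additional care is needed to keep every step valid in a general complete separable metric space — relying only on the triangle inequality and the elementary mass distribution principle rather than Euclidean covering or density theorems — and to arrange compact support when invoking Frostman's lemma in the first direction.
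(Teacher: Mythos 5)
Your proof is correct. Note that the paper does not actually prove this corollary: it defers to Kaufman and to \cite[Theorem 2.8]{Mattila1}, and your argument is essentially that standard cited proof --- Frostman's lemma combined with the distribution-function computation $\int d(x,y)^{-t}\,d\mu(y)=t\int_0^{\infty}\mu(B(x,r))\,r^{-t-1}\,dr$ in one direction (the very computation the paper itself reuses inside Lemma \ref{FrostAbs}), and, in the other, the $\mu$-a.e.\ vanishing of $r^{-s}\mu(B(x,r))$ upgraded to a genuine Frostman bound. The only packaging difference from the cited source lies in that upgrade: Mattila invokes a density--measure comparison theorem (if $\limsup_{r\to0}r^{-s}\mu(B(x,r))\leq\lambda$ on a set $E$, then $\Ha^{s}(E)\geq c\,\mu(E)/\lambda$), whereas you restrict to the sets $E_k$ and double the radius to obtain a uniform bound $\nu(B(a,r))\leq Cr^{s}$ before applying the mass distribution principle; the two routes are interchangeable, and yours has the mild virtue of using nothing beyond the triangle inequality, which is exactly what keeps the statement valid in a general complete separable metric space, as you note. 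If you write this up, add one line verifying that $E_k$ is Borel (with open balls it is in fact closed, since $x\mapsto\mu(B(x,r))$ is lower semicontinuous and by monotonicity it suffices to intersect over rational $r\leq 1/k$), so the restriction $\nu=\mu\lfloor_{E_k}$ is legitimate; and recall that in a complete separable metric space finite Borel measures are tight, which justifies the passage to compact support when invoking Frostman's lemma in the first direction.
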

This approach was first introduced in $\R$ by R. Kaufman in \cite{Kaufman}. A simple proof of this is given in \cite[Theorem 2.8]{Mattila1}.


\subsection{Fourier transform of measures}
In the same way as for $L^1$ functions, one can define the Fourier transform of a finite measure as $\muhat(\xi)=\int_{\Rn}e^{-2\pi ix\cdot\xi}d\mu$. Note that with this definition, $\muhat$ is a bounded Lipschitz function. Convolution of functions and measures are defined as 
\begin{equation}
(f*\mu)(x):=\int f(x-y)d\mu(y),
\end{equation}
whenever the integral is defined. More generally, convolution of finite measures is defined as another finite measure, $\mu\ast\nu$, given by
\begin{equation}\label{MeasConv}
\int \varphi(z)d(\mu\ast\nu)(z):=\iint\varphi(x+y)d\mu(x)d\nu(y),
\end{equation}
for all non-negative, continuous bounded functions $\varphi$ (see \cite[pp.16]{mattila2}).

The following properties hold,

\begin{lemma}\label{prop}\text{ }

\begin{enumerate}
 \item $\widehat{f*\mu}=\widehat{f}\muhat, \ \text{for}\  f\in L^1(\Rn)$.
 \item $\widehat{\mu\ast\nu}=\muhat\widehat{\nu}$ for $\mu,\nu\in\Ma(\Rn).$
 \item$ \int fd\mu = \int \widehat{f}\widehat{\mu},\ \forall\ f\in\Sw(\Rn)$.
 \item If $\muhat\in L^2(\Rn)$, then $\mu=fdx$ for some $f\in L^2(\Rn).$
 \item  If $\muhat\in L^1(\Rn)$, then $\mu=gdx$ for some $g\in \mathcal{C}(\Rn).$
\end{enumerate}
\end{lemma}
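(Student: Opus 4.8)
The plan is to split the five assertions into the algebraic identities (1)--(3), each of which reduces to Fubini's theorem together with Fourier inversion, and the two structural statements (4)--(5), which I would bootstrap from (3) once it is in hand. Throughout, the admissibility of every interchange of integrals rests on the finiteness of $\mu$ (and $\nu$) and the integrability of the functions involved.

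For (1), I would expand $\widehat{f\ast\mu}(\xi)=\iint e^{-2\pi i x\cdot\xi}f(x-y)\,d\mu(y)\,dx$ directly from the definitions, substitute $z=x-y$, and factor $e^{-2\pi i(z+y)\cdot\xi}=e^{-2\pi i z\cdot\xi}\,e^{-2\pi i y\cdot\xi}$; the resulting product integral is exactly $\widehat{f}(\xi)\muhat(\xi)$, with Fubini justified because $f\in L^1(\Rn)$ and $\mu$ is finite. For (2) I would apply the definition \eqref{MeasConv} of $\mu\ast\nu$ to the bounded continuous kernel $\varphi(z)=e^{-2\pi i z\cdot\xi}$ (decomposing into real and imaginary parts to respect the non-negativity hypothesis under which \eqref{MeasConv} was stated), obtaining $\widehat{\mu\ast\nu}(\xi)=\iint e^{-2\pi i(x+y)\cdot\xi}\,d\mu(x)\,d\nu(y)=\muhat(\xi)\nuhat(\xi)$. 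Item (3), the multiplication formula, is the technical core: I would write $\int\widehat{f}\,\muhat=\int\widehat{f}(\xi)\!\left(\int e^{-2\pi i x\cdot\xi}\,d\mu(x)\right)d\xi$, interchange integrals (legitimate since $\widehat{f}\in L^1(\Rn)$ for $f\in\Sw(\Rn)$ and $\mu$ is finite), and recognize the inner integral as the Fourier transform of the Schwartz function $\widehat{f}$, which Fourier inversion identifies with $f$, yielding $\int f\,d\mu$.

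For (4) and (5) I would reduce to (3) by testing against Schwartz functions and invoking the fact that $\Sw(\Rn)$ separates finite measures. If $\muhat\in L^2(\Rn)$, Plancherel's theorem furnishes $f\in L^2(\Rn)$ with $\widehat{f}=\muhat$; then for every $g\in\Sw(\Rn)$, item (3) together with the $L^2$ multiplication formula gives $\int g\,d\mu=\int\widehat{g}\,\muhat=\int\widehat{g}\,\widehat{f}=\int g\,f\,dx$, whence $\mu=f\,dx$. If instead $\muhat\in L^1(\Rn)$, I would set $g(x)=\int\muhat(\xi)e^{2\pi i x\cdot\xi}\,d\xi$; dominated convergence shows $g$ is continuous, and Fourier inversion for $L^1$ gives $\widehat{g}=\muhat$, so the same test-function computation as before yields $\mu=g\,dx$ with $g\in\mathcal{C}(\Rn)$.

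I expect the main obstacle to be bookkeeping rather than conceptual: pinning down the reflection induced by the sign convention in the inversion formula (which only relabels $f$ or $g$ by $x\mapsto -x$ and is harmless), verifying the integrability hypotheses that license each use of Fubini and of the multiplication formula, and --- the one genuinely measure-theoretic point --- justifying that agreement of $\int g\,d\mu$ with $\int g\,f\,dx$ for all $g\in\Sw(\Rn)$ forces equality of the measures, which follows since Schwartz functions are dense in $C_0(\Rn)$ and a finite Radon measure is determined by its action on $C_0(\Rn)$.
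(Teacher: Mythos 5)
Your argument is correct in substance, but note for comparison that the paper offers no proof of this lemma at all: these are stated as standard facts, with the relevant references (e.g.\ \cite[Theorems 3.3 and 3.4]{Mattila1} for items (4) and (5), and \cite{mattila2} for the convolution identities) invoked later at the points of use. Your Fubini-based derivations of (1)--(3) and the bootstrapping of (4)--(5) from the multiplication formula are exactly the standard textbook route, so there is nothing to compare at the level of strategy; two small points in your write-up deserve care, though neither is a genuine gap. First, as you yourself flag, item (3) as literally stated carries a reflection: with the convention $\muhat(\xi)=\int e^{-2\pi i x\cdot\xi}\,d\mu(x)$ one computes $\int \widehat f\,\muhat\,d\xi=\int f(-x)\,d\mu(x)$, so the clean identities are $\int \widehat f\,d\mu=\int f\,\muhat$ or $\int f\,d\mu=\int \widehat f\,\overline{\muhat}$; since in your proof of (4) the formula is applied once on each side, the reflections cancel and the conclusion is unaffected. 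Second, in (4) the density $f\in L^2(\Rn)$ need not be integrable, so $f\,dx$ need not be a finite measure and you cannot literally conclude by duality with $C_0(\Rn)$ as you propose; instead, approximate indicators of balls by uniformly bounded compactly supported test functions and use $f\in L^1_{\mathrm{loc}}(\Rn)$ to obtain $\mu(B)=\int_B f\,dx$ for every ball, which determines $\mu$. The same test-function computation also tightens your step in (5), where asserting $\widehat g=\muhat$ presupposes $g\in L^1(\Rn)$, which is not known a priori: simply run Fubini directly on $\int \widehat\varphi\,\muhat$ for $\varphi\in\Sw(\Rn)$ to identify $\int\varphi\,d\mu$ with $\int\varphi\,g\,dx$, with $g$ continuous and bounded as you constructed it.
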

 One of the main connection between Fourier analysis and Hausdorff dimension comes from using the Fourier transform to estimate energy integrals. For $0< s<n$, we denote by $R_s$ the kernel of the Riesz potential, $R_s(x)=|x|^{-s}$. The energy of a measure $\mu$ can be written as $I_s(\mu)=\int R_s*\mu d\mu$. Using the distributional Fourier transform and its properties, one can compute $\widehat{R}_s=\kappa(n,s)R_{n-s}$, where $\kappa$ is an universal constant depending on $s$ and $n$. Using this results together, smooth approximations to the measure $\mu$ and the properties of the Fourier transform discussed above, one can justify the following formula,
 \begin{equation}\label{Fenergy}
 I_s(\mu)=\int R_s*\mu d\mu = \kappa(n,s)\int R_{n-s}|\muhat|^2=\kappa(n,s)\int|\muhat(x)|^2|x|^{s-n}dx.
 \end{equation}
 
 A proof of this can be found on \cite[pp. 38]{Mattila1}. \\
 Finally, we will also make use of the mutual energy of measures. For compactly supported measures $\mu$ and $\nu$ we define the mutual $s$-energy, with $0\leq s\leq n$, as  \[I_s(\mu,\nu)=\int\int|x-y|^{-s}d\nu(y)d\mu(x).\] 
 Just as with energies of measures, it is convenient to write the mutual energy as an integral on the frequency domain. If $I_{\alpha}(\mu)<\infty,\ I_{\beta}(\nu)<\infty$ and $s=\frac{\alpha+\beta}{2}$, by Holder's inequality we have,
 \begin{equation}\label{holderen}
 \int_{\Rn}|\muhat(x)\overline{\nuhat}(x)||x|^{s-n}dx\leq\left(\int|\muhat(x)|^2|x|^{\alpha-n} \right)^{1/2}\left(\int|\widehat{\nu}(x)|^2|x|^{\beta-n} \right)^{1/2}\lesssim(I_{\alpha}(\mu)I_{\beta}(\nu))^{1/2}.
 \end{equation}
 Therefore, if $\{\psi_{\delta}\}_{\delta>0}\subset\Sw(\Rn)$ is an approximation to the identity, and we take $\mu_{\delta}=\psi_{\delta}*\mu,\ \nu_{\delta}=\psi_{\delta}*\nu$, the fact that $\widehat{\psi}_{\delta}$ is uniformly bounded tells us that \[|\muhat_{\delta}\overline{\nuhat}_{\delta}||\cdot|^{s-n}=|\muhat\overline{\nuhat}||\widehat{\psi}_{\delta}|^{2}|\cdot|^{s-n},\]
 is dominated by $|\muhat\overline{\nuhat}||\cdot|^{s-n}.$ This validates the formula
 \begin{equation}\label{JointEner}
 I_s(\mu,\nu)=\kappa(n,s)\int \muhat(x)\widebar{\widehat{\nu}}(x)|x|^{s-n}dx,
 \end{equation}
 for $s\leq\frac{\alpha+\beta}{2}$. Also note that since $\mu$ and $\nu$ are positive measures the mutual energy is positive.
 

\section{Orthogonal projections onto isotropic planes}
We begin this section by establishing already known results about dimension distortion by isotropic projections. The most natural statement is that analogous to Theorem \ref{PROJ} and we state it in its strongest form as follows.
\begin{theorem}\label{IsoProjExcep}
If $A\subset\Rnn$ is a Borel set with $\dim A=\alpha$, then
\begin{enumerate}
\item If $\alpha\leq m$ and $t\in(0,\alpha]$,  $\dim\{V\in\Gh:\dim(P_VA)<t\}\leq \eta^{n}_{m} +t-\alpha$
\item If $\alpha> m$,  $\dim\{V\in\Gh:\Ha^m(P_VA)=0\}\leq \eta^{n}_{m} +m-\alpha$
\item If $\alpha> 2m$,  $\dim\{V\in\Gh:Int(P_VA)\neq\varnothing\}\leq \eta^{n}_{m} +2m-\alpha$.
\end{enumerate}

Furthermore, if $\alpha=m,$ and $\Ha^m(A)<\infty$ then $\Ha^m(P_VA)=0$ for $\munm$-a.e. $V\in\Gh$ if, and only if, $A$ is purely m-unrectifiable.
\end{theorem}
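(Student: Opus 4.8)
The plan is to derive all three exceptional-set estimates from a single transversality bound, following the scheme of Peres and Schlag \cite{PeresSchlag} as adapted to the isotropic family by Hovila \cite{Hovila}. The starting point is Hovila's result that $\{P_V\}_{V\in\Gh}$ is a transversal family over the parameter manifold $\Gh$ of dimension $\eta_m^n$; this places us in the setting where the Peres--Schlag machinery applies with parameter dimension $\ell=\eta_m^n$. To unify the three phenomena I would work with the Sobolev dimension of a measure $\nu$ on an $m$-plane, $\dim_S\nu=\sup\{\sigma:\int|\widehat{\nu}(\xi)|^2(1+|\xi|)^{\sigma-m}\,d\xi<\infty\}$. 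This single quantity controls all three conclusions: for $\sigma<m$, finiteness of the integral is equivalent to $I_\sigma(\nu)<\infty$ by \eqref{Fenergy}; $\dim_S\nu>m$ forces $\nu\ll\Ha^m$ with an $L^2$ density; and $\dim_S\nu>2m$ forces $\widehat{\nu}\in L^1$, so $\nu$ has a continuous density.

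First I would fix $\alpha'<\alpha$ and use Frostman's lemma together with its energy version (Corollary \ref{FrostEner}) to produce a compactly supported $\mu\in\Ma(A)$ with $I_{\alpha'}(\mu)<\infty$; after passing to a compact subset we may assume $\operatorname{spt}\mu\subseteq A$. The pushforward $P_{V\#}\mu$ is then supported on $P_VA$, and each geometric conclusion is implied by a lower bound on $\dim_S(P_{V\#}\mu)$: if $\dim_S(P_{V\#}\mu)\ge t$ with $t\le m$ then $\dim P_VA\ge t$; if $\dim_S(P_{V\#}\mu)>m$ then $\Ha^m(P_VA)>0$, since a nonzero $L^2$ density on a compact set has positive-measure support; and if $\dim_S(P_{V\#}\mu)>2m$ then $P_VA$ has nonempty interior, since a nonzero continuous density is positive on an open set. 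Consequently the three exceptional sets in (1)--(3) are contained, respectively, in $\{V:\dim_S(P_{V\#}\mu)<t\}$, $\{V:\dim_S(P_{V\#}\mu)\le m\}$, and $\{V:\dim_S(P_{V\#}\mu)\le 2m\}$.

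The heart of the matter, and the step I expect to be the main obstacle, is the transversality estimate: for $\mu$ with $I_{\alpha'}(\mu)<\infty$ and thresholds $\sigma\in\{t,m,2m\}$ one must establish
\[\dim\{V\in\Gh:\dim_S(P_{V\#}\mu)<\sigma\}\le\eta_m^n+\sigma-\alpha'.\]
This is the content of the Peres--Schlag exceptional-set bounds, but verifying that Hovila's transversality holds to the precise order and regularity those bounds demand, and that the unitary-invariant measure $\munm$ and the geometry of $\Gh$ enter only through the parameter dimension $\eta_m^n$, is where the genuine work lies; this is what makes the otherwise formal reduction rigorous. Granting the displayed estimate, I would let $\alpha'\uparrow\alpha$ along a countable sequence to obtain parts (1)--(3) with the stated bounds $\eta_m^n+t-\alpha$, $\eta_m^n+m-\alpha$, and $\eta_m^n+2m-\alpha$. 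Finally, the critical case $\alpha=m$ is of a different nature: it is the Besicovitch--Federer projection theorem for the isotropic family, which is not an energy estimate but rests on the structure theory of rectifiable sets. For this last statement I would simply invoke the version already available for the transversal family $\{P_V\}_{V\in\Gh}$, as recorded in Theorem \ref{ISOPROJ} and established by Hovila \cite{Hovila}.
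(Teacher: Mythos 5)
Your proposal is correct and is essentially the same approach as the paper's: the paper offers no proof of Theorem \ref{IsoProjExcep} beyond citing R.~Hovila \cite{Hovila}, and Hovila's argument is precisely the route you sketch --- transversality of the family $\{P_V\}_{V\in\Gh}$ fed into the Peres--Schlag exceptional-set machinery with parameter dimension $\eta_m^n$, with the Besicovitch--Federer statement at $\alpha=m$ coming from the same source. Since you correctly reduce the two genuinely hard steps (verifying transversality to the required order, and the Peres--Schlag Sobolev-dimension bounds) to exactly the references the paper itself relies on, your write-up amounts to the paper's proof with the citation unpacked.
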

This was proven by proven by R. Hovila in \cite{Hovila}, and implies the almost everywhere statements of Theorem \ref{ISOPROJ}.

With these general result about projections onto isotropic planes in hand, we now turn our attention to intersections of such projections. We would like to establish a result analogous to Theorem \ref{IntProj}. In order to establish such a result we will need an isotropic version of the Grassmannian disintegration fromula in $\Rnn$. This is precisely the content of the following 

\begin{lemma}[Isotropic disintegration formula]\label{desintegrate}
There exists a positive constant $c=c(n,m)$ such that, for all $f\in L^1(\Rnn)$,
\[\int_{\Rnn}f(x)dx=c\int_{\Gh}\int_V|u|^{2n-m}f(u)d\Ha^m(u)d\munm(V).\]
\end{lemma}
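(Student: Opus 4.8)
The plan is to show that the right-hand side, viewed as a functional of $f$, defines a positive Radon measure on $\Rnn$ which is simultaneously invariant under the unitary group and homogeneous of the correct degree, and then to invoke the essential uniqueness of Lebesgue measure among such measures. Concretely, for $f\in C_c(\Rnn)$ I would set
\[
L(f):=\int_{\Gh}\int_V|u|^{2n-m}f(u)\,d\Ha^m(u)\,d\munm(V),
\]
and first check that $L$ is well defined and finite. On a fixed isotropic $m$-plane $V$ the restriction $\Ha^m\lfloor V$ is just $m$-dimensional Lebesgue measure, so in polar coordinates on $V$ the inner integral is controlled by $\int_0^R\rho^{2n-m}\rho^{m-1}\,d\rho=R^{2n}/(2n)$ times $\|f\|_\infty$ and the measure of $S^{m-1}$, whenever $\operatorname{supp}f\subset B(0,R)$; here the exponent $2n-m$ is exactly what makes the radial integrand $\rho^{2n-1}$ locally integrable. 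Since $\munm$ is a probability measure the outer integral is then finite, and a standard argument shows that $V\mapsto\int_V|u|^{2n-m}f(u)\,d\Ha^m(u)$ is continuous, hence Borel, on the smooth manifold $\Gh$. Thus $L$ is a positive linear functional on $C_c(\Rnn)$ and, by the Riesz representation theorem, $L(f)=\int f\,d\lambda$ for a unique Radon measure $\lambda$.

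Next I would establish the two symmetries of $\lambda$. For the rotational symmetry, fix $g\in\mathcal{U}(n)$ and recall that $g$ is a Euclidean isometry of $\Rnn=\Cn$ carrying isotropic planes to isotropic planes, so that $g_*(\Ha^m\lfloor V)=\Ha^m\lfloor gV$ and $|gu|=|u|$; combined with the $\mathcal{U}(n)$-invariance of $\munm$ and the change of variables $V\mapsto gV$, this yields $L(f\circ g)=L(f)$, i.e. $\lambda$ is $\mathcal{U}(n)$-invariant. For the scaling symmetry, write $\delta_r(x)=rx$; since isotropic planes are invariant under dilations, $\Ha^m$ scales by $r^m$ and the weight $|u|^{2n-m}$ scales by $r^{2n-m}$, and substituting $v=ru$ inside each $V$ gives
\[
L(f\circ\delta_r)=r^{-(2n-m)}r^{-m}L(f)=r^{-2n}L(f),
\]
which is exactly the homogeneity enjoyed by Lebesgue measure on $\R^{2n}$.

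Finally I would conclude by a uniqueness argument. Because $\mathcal{U}(n)$ acts transitively on the unit sphere $S^{2n-1}\subset\Cn$, any $\mathcal{U}(n)$-invariant Radon measure on $\Rnn\setminus\{0\}$ is rotationally invariant and hence, in polar coordinates $x=\rho\theta$, splits as a radial measure times the uniform measure on $S^{2n-1}$. Feeding this decomposition into the homogeneity relation $L(f\circ\delta_r)=r^{-2n}L(f)$ forces the radial part to be $c\,\rho^{2n-1}\,d\rho$, which is precisely the radial part of Lebesgue measure; since the weight $|u|^{2n-m}$ kills any atom at the origin, one gets $\lambda=c^{-1}\mathcal{L}^{2n}$ on all of $\Rnn$. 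Rearranging yields the stated identity, first for $f\in C_c(\Rnn)$ and then for all $f\in L^1(\Rnn)$ by the usual monotone approximation. The only genuinely delicate points are the measurability/continuity of the inner integral in $V$ and the uniqueness step; both are standard, the latter being the integral-geometric principle that a $\mathcal{U}(n)$-invariant, correctly homogeneous measure on $\Rnn$ must be a constant multiple of Lebesgue measure, so the real content is simply matching the homogeneity exponent, which is exactly the role the weight $|u|^{2n-m}$ is designed to play.
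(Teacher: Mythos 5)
Your proposal is correct, but it is packaged differently from the paper's proof, so a comparison is worthwhile. Both arguments ultimately pivot on the same key fact --- $\mathcal{U}(n)$ acts transitively on $\mathbb{S}^{2n-1}$, so the $\mathcal{U}(n)$-invariant measure on the sphere is unique up to a constant --- but they deploy it differently. The paper computes directly: it writes the double integral in polar coordinates on each $V$ (the weight $|u|^{2n-m}$ combining with the Jacobian $r^{m-1}$ to give exactly $r^{2n-1}$), interchanges the radial integration with the integration over $\Gh$, observes that $\varphi\mapsto\int_{\Gh}\int_{V\cap\mathbb{S}^{2n-1}}\varphi\,d\sigma^{m-1}\,d\munm$ is a $\mathcal{U}(n)$-invariant measure on $\mathbb{S}^{2n-1}$, hence a constant multiple of $\sigma^{2n-1}$, and then reassembles Lebesgue measure via polar coordinates on $\Rnn$; no abstract uniqueness statement about measures on $\Rnn$ is ever needed. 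You instead promote the functional to a Radon measure $\lambda$ via Riesz representation and characterize it by two symmetries, $\mathcal{U}(n)$-invariance and homogeneity of degree $2n$. This is more modular --- the same scheme would identify any family of weights and measures with these symmetries, without having to guess the polar factorization in advance --- but note that your uniqueness step secretly contains the paper's entire argument: the splitting of a $\mathcal{U}(n)$-invariant measure on $\Rnn\setminus\{0\}$ into a radial measure times $\sigma^{2n-1}$ is proved precisely by disintegrating along the radial projection and applying the sphere uniqueness to the conditional measures. On that point your phrasing ``is rotationally invariant and hence \dots splits'' has the logic inverted: $\mathcal{U}(n)$ is a proper subgroup of $O(2n)$, so full rotational invariance is not automatic; it is a \emph{consequence} of the splitting, which holds because $\mathcal{U}(n)$-orbits in $\Rnn\setminus\{0\}$ happen to be entire spheres. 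This is a slip of exposition, not a gap, since you invoke exactly the right mechanism (transitivity on the sphere). Two small housekeeping items your sketch should record: $\lambda\neq 0$ (test against a radial bump supported on an annulus; every $V$ meets it), so the constant is genuinely positive and finite, and the passage from $C_c(\Rnn)$ to $L^1(\Rnn)$, which is routine once the identity is known for compactly supported continuous functions.
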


\begin{proof}
Using spherical coordinates on $V$ we can compute the integral on the left as follows, 

\[\int_{\Gh}\int_0^{\infty}\int_{V\cap\mathbb{S}^{2n-1}}r^{2n-m}f(rv)d\sigma^{m-1}(v)r^{m-1}drd\munm(V)\]
\[=\int_{0}^{\infty}r^{2n-1}\int_{\Gh}\int_{V\cap\mathbb{S}^{2n-1}}f(rv)d\sigma^{m-1}(v)d\munm(V)dr.\]
Now we take a closer look at the inner double-integral. We know that $U(n)$ acts transitively on $\mathbb{S}^{2n-1}$, therefore, up to multiplication by a constant, there is a unique $U(n)$-invariant measure on $\mathbb{S}^{2n-1}$. Since $\sigma^{2n-1}$ is $O(n)$ invariant, it is in particular $U(n)$ invariant. Hence any $U(n)$-invariant measure on $\mathbb{S}^{2n-1}$ must be a constant multiple of the surface measure $\sigma^{2n-1}$. Now, for a function $\varphi$ on $\mathbb{S}^{2n-1}$, one can check that the measure on $\mathbb{S}^{2n-1}$ given by, $\int_{\Gh}\int_{V\cap\mathbb{S}^{2n-1}}\varphi(v)d\sigma^{m-1}(v)d\munm(V),$ is $U(n)$-invariant. So as noted before, there exist a constant $c=c(n,m)$ such that 
\[\int_{\Gh}\int_{V\cap\mathbb{S}^{2n-1}}\varphi(v)d\sigma^{m-1}(v)d\munm(V)=c\int_{\mathbb{S}^{2n-1}}\varphi(v)d\sigma^{2n-1}(v).\]
Going back to our overall integral, we now have,
\[I=\int_{0}^{\infty}r^{2n-1}\int_{\Gh}\int_{V\cap\mathbb{S}^{2n-1}}f(rv)d\sigma^{m-1}(v)d\munm(V)dr\]
\[=c\int_0^{\infty}r^{2n-1}\int_{\mathbb{S}^{2n-1}}f(rv)d\sigma^{2n-1}(v)dr=c\int_{\Rnn}f(x)dx.\]

\end{proof}

One application of this formula is the following lemma, which will be useful later,
\begin{lemma}\label{FrostAbs}
Let $m<s\leq 2n$ and $\mu\in\Ma(\Rnn)$ be a measure such that, for some $C>0$, \[\mu(B(x,r))\leq Cr^{s}\text{ for all } x\in\Rnn,\ r>0.\] Then, $P_{V\#}\mu\ll\Ha^m$ for $\munm-$almost every $V\in\Gh$.
\end{lemma}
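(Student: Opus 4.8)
The plan is to show that the pushforward measures $P_{V\#}\mu$ are absolutely continuous with respect to $\Ha^m$ for almost every isotropic plane by controlling their Fourier transforms in an $L^2$-averaged sense. By property (4) of Lemma \ref{prop}, it suffices to prove that $\widehat{P_{V\#}\mu}\in L^2(V)$ for $\munm$-a.e.\ $V\in\Gh$; since $V\cong\R^m$, this will give $P_{V\#}\mu = f_V\,d\Ha^m$ for some $f_V\in L^2(V)$, which is exactly absolute continuity. To establish the $L^2$ bound for almost every $V$, I would integrate the relevant quantity over $\Gh$ against $\munm$ and show the total integral is finite; finiteness of an average forces finiteness $\munm$-a.e.

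First I would record the key identity for the Fourier transform of a projected measure. For $V\in\Gh$ and $\xi\in V$, one has $\widehat{P_{V\#}\mu}(\xi)=\widehat{\mu}(\xi)$, where on the right $\xi$ is regarded as an element of $\Rnn$ via the inclusion $V\hookrightarrow\Rnn$ (this is the standard relation $\widehat{P_{V\#}\mu}=\widehat{\mu}\circ P_V^{*}$, using that $P_V$ is an orthogonal projection so $P_V^{*}$ is the inclusion). Consequently
\[
\int_V \bigl|\widehat{P_{V\#}\mu}(\xi)\bigr|^2\,d\Ha^m(\xi)=\int_V|\widehat{\mu}(\xi)|^2\,d\Ha^m(\xi).
\]
I would then integrate this over $\Gh$ with respect to $\munm$ and apply the isotropic disintegration formula of Lemma \ref{desintegrate}, but with the weight reorganized: writing the $V$-integral as an integral against $|\xi|^{-(2n-m)}\cdot|\xi|^{2n-m}$ so as to match the formula. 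Concretely, Lemma \ref{desintegrate} converts
\[
\int_{\Gh}\int_V |\xi|^{2n-m}\,g(\xi)\,d\Ha^m(\xi)\,d\munm(V)=c^{-1}\int_{\Rnn}g(\xi)\,d\xi,
\]
so choosing $g(\xi)=|\widehat{\mu}(\xi)|^2|\xi|^{-(2n-m)}$ gives
\[
\int_{\Gh}\int_V|\widehat{\mu}(\xi)|^2\,d\Ha^m(\xi)\,d\munm(V)=c^{-1}\int_{\Rnn}|\widehat{\mu}(\xi)|^2|\xi|^{-(2n-m)}\,d\xi.
\]
By the Fourier representation of energy, equation \eqref{Fenergy} with $n$ replaced by $2n$ and exponent chosen so that $2n-(2n-m)=m$, the right-hand side is a constant multiple of the $m$-energy $I_m(\mu)$. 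Finally, the Frostman condition $\mu(B(x,r))\le Cr^s$ with $s>m$ guarantees $I_m(\mu)<\infty$ by the standard estimate $I_t(\mu)\lesssim\int_0^{\mathrm{diam}}r^{-t}\,d(\mu\times\mu\text{-ball mass})\lesssim C$ for any $t<s$. Hence the averaged integral is finite, so for $\munm$-a.e.\ $V$ we have $\int_V|\widehat{P_{V\#}\mu}|^2\,d\Ha^m<\infty$, and property (4) of Lemma \ref{prop} yields $P_{V\#}\mu\ll\Ha^m$ for $\munm$-a.e.\ $V$.

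The main obstacle I anticipate is the bookkeeping in matching the weight $|\xi|^{2n-m}$ built into the disintegration formula against the Riesz weight $|\xi|^{s-2n}$ appearing in the energy formula \eqref{Fenergy}; one must verify that the exponent arithmetic lands exactly on the $m$-energy (i.e.\ that $(2n-m)$ is the correct compensating power so that the ambient integral becomes $\int|\widehat{\mu}|^2|\xi|^{m-2n}\,d\xi$). A secondary technical point is justifying the Fourier computation rigorously: strictly, $\widehat{\mu}$ need not be in $L^2(\Rnn)$, so I would carry out the argument using the standard approximate-identity regularization $\mu_\delta=\psi_\delta*\mu$ as in the derivation of \eqref{Fenergy} and \eqref{JointEner}, proving the bound for $\mu_\delta$ with constants uniform in $\delta$ and passing to the limit via Fatou, rather than manipulating $\widehat{\mu}$ directly. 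With those two points handled, the chain of equalities is routine.
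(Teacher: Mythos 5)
Your proposal is correct and follows essentially the same route as the paper: bound $I_m(\mu)$ using the Frostman condition, use $\widehat{P_{V\#}\mu}=\widehat{\mu}$ on $V$ together with the disintegration formula of Lemma \ref{desintegrate} (applied to $|\widehat{\mu}(\xi)|^2|\xi|^{m-2n}$) to show $\int_{\Gh}\int_V|\widehat{P_{V\#}\mu}|^2\,d\Ha^m\,d\munm = c^{-1}I_m(\mu)<\infty$, and conclude via the $L^2$ Fourier criterion that $P_{V\#}\mu\ll\Ha^m$ with $L^2$ density for $\munm$-a.e.\ $V$. Your exponent bookkeeping ($2n-m$ versus $m-2n$, with \eqref{Fenergy} in ambient dimension $2n$) lands exactly where the paper's does, and your remark about regularizing with $\mu_\delta=\psi_\delta*\mu$ addresses the same technicality the paper absorbs into its earlier justification of \eqref{Fenergy}.
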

\begin{proof}
Using \cite[Theorem 1.15]{mattila2}, for any $\sigma< s$ we have 
\begin{align*}
I_{\sigma}(\mu)=&\int\int |x-y|^{-\sigma}d\mu(y)d\mu(x)\\
&=\int \int_0^{\infty}\mu(B(x,r^{-\frac{1}{\sigma}}))drd\mu(x)\\
&=\sigma\int\int_0^{\infty}\mu(B(x,u))u^{-\sigma-1}dud\mu(x)\\
&\leq \sigma\mu(\Rnn)\left[C\int_0^1u^{s-\sigma-1}du+\mu(\Rnn)\int_1^{\infty}u^{-\sigma-1}du\right]<\infty.
\end{align*}
In particular $I_m(\mu)<\infty$. By Lemma \ref{desintegrate}, and using the fact that for any plane $V$ and $v\in V$, $\widehat{P_{V\#}\mu}(v)=\widehat{\mu}(v)$, we compute
\begin{align*}
\int_{\Gh} &\int_V|\widehat{P_{V\#}\mu}(v)|^2d\Ha^m(v)d\munm(V)\\
&=\int_{\Gh} \int_V|\widehat{\mu}(v)|^2d\Ha^m(v)d\munm(V)\\
&=c(n,m)^{-1}\int_{\Rnn}|x|^{m-2n}|\widehat{\mu}(x)|^{2}dx\\
&=c(n,m)^{-1}I_m(\mu)<\infty.
\end{align*}
That is, $\widehat{P_{V\#}\mu}\in L^{2}(V)$ for $\munm-$ almost every $V\in\Gh$. It follows from \cite[Theorem 3.3]{Mattila1}, that $P_{V\#}\mu(v)\ll\Ha^m$ with density in $L^2(V)$, for $\munm-$almost every $V\in\Gh$.
\end{proof}

Part (1) of Theorem \ref{isoprojint} is proven next.

\begin{theorem}\label{caseA}
If $A,B\subset\Rnn$ are Borel sets with $\dim A>m$ and $\dim B>m$, then 
\[\munm(V\in\Gh: \Ha^m(P_V A\cap P_V B)>0)>0\]

\end{theorem}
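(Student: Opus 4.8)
The plan is to mirror the Euclidean argument behind Theorem \ref{IntProj}(1) of Mattila and Orponen, substituting the isotropic disintegration formula (Lemma \ref{desintegrate}) for the classical Grassmannian one. First I would replace the sets by measures: since $\dim A>m$ and $\dim B>m$, I fix exponents $s,t$ with $m<s<\dim A$ and $m<t<\dim B$, and use Frostman's lemma to produce compactly supported Frostman measures $\mu\in\Ma(A)$ and $\nu\in\Ma(B)$ with $\mu(B(x,r))\leq r^{s}$ and $\nu(B(x,r))\leq r^{t}$. The energy computation carried out in the proof of Lemma \ref{FrostAbs} then shows $I_{\sigma}(\mu),I_{\sigma}(\nu)<\infty$ for $\sigma$ below $s,t$, so I may fix $\alpha,\beta\in(m,\min\{s,t\})$ with $I_{\alpha}(\mu),I_{\beta}(\nu)<\infty$. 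Because $s,t>m$, Lemma \ref{FrostAbs} applies and gives, for $\munm$-a.e.\ $V\in\Gh$, absolute continuity $P_{V\#}\mu=f_V\,d\Ha^m|_V$ and $P_{V\#}\nu=g_V\,d\Ha^m|_V$ with $f_V,g_V\in L^2(V)$.

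The heart of the proof is to show that the averaged overlap
\[\int_{\Gh}\int_V f_V\,g_V\,d\Ha^m\,d\munm(V)\]
is strictly positive. On each $m$-plane $V$, Plancherel gives $\int_V f_V g_V\,d\Ha^m=\int_V\widehat{f_V}\,\overline{\widehat{g_V}}\,d\Ha^m$, and, exactly as in Lemma \ref{FrostAbs}, $\widehat{f_V}(v)=\widehat{P_{V\#}\mu}(v)=\muhat(v)$ and $\widehat{g_V}(v)=\nuhat(v)$ for $v\in V$. Inserting this and applying the isotropic disintegration formula (Lemma \ref{desintegrate}) to the function $u\mapsto|u|^{m-2n}\muhat(u)\overline{\nuhat(u)}$, which lies in $L^1(\Rnn)$ by the Hölder bound \eqref{holderen} together with $m\leq(\alpha+\beta)/2$, collapses the double integral to a constant multiple of $\int_{\Rnn}|x|^{m-2n}\muhat(x)\overline{\nuhat(x)}\,dx$. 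By \eqref{JointEner} this is a constant multiple of the mutual energy $I_m(\mu,\nu)$. All the Fubini interchanges are legitimate since the integrand is absolutely integrable, again by \eqref{holderen}.

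It then remains to observe that $I_m(\mu,\nu)=\int\int|x-y|^{-m}\,d\nu(y)\,d\mu(x)$ is finite by \eqref{holderen} and strictly positive, the latter simply because the integrand is positive and $\mu,\nu$ are nonzero positive measures. Hence the averaged overlap is positive, so $\int_V f_V g_V\,d\Ha^m>0$ for $V$ in a set $G$ of positive $\munm$-measure. For $V\in G$ (intersected with the $\munm$-full set where the densities exist) one has $\Ha^m(\{f_V>0\}\cap\{g_V>0\})>0$; since $f_V$ is the density of $P_{V\#}\mu$, the set $\{f_V>0\}$ is contained, up to an $\Ha^m$-null set, in $\mathrm{supp}(P_{V\#}\mu)\subseteq P_V(\mathrm{supp}\,\mu)\subset P_VA$, and likewise $\{g_V>0\}\subset P_VB$. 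Therefore $\Ha^m(P_VA\cap P_VB)>0$ for a positive-$\munm$-measure set of $V$, which is the assertion.

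The hard part is the final measure-theoretic transfer: passing from positivity of the overlap integral $\int_V f_V g_V$ to positivity of $\Ha^m(P_VA\cap P_VB)$. This requires knowing that the a.e.-defined supports of the densities sit inside the genuine projected sets, which is precisely where compactness of $\mathrm{supp}\,\mu$ and $\mathrm{supp}\,\nu$ is needed (so that $P_V(\mathrm{supp}\,\mu)$ is closed and contained in $P_VA$). Everything else is a formal computation driven by Lemma \ref{desintegrate}; the only genuine analytic input beyond it is the integrability furnished by \eqref{holderen}, and it is exactly this that makes the hypotheses $\dim A,\dim B>m$ sufficient.
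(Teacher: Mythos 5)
Your proposal is correct and follows essentially the same route as the paper's own proof: Frostman/energy measures on $A$ and $B$, Lemma \ref{FrostAbs} to get $L^2$ densities of the projected measures, Plancherel on each $V$ combined with Lemma \ref{desintegrate} to identify the averaged overlap $\int_{\Gh}\int_V \mu_V\nu_V\,d\Ha^m\,d\munm$ with a positive multiple of the mutual energy $I_m(\mu,\nu)>0$, and then the support argument to conclude $\Ha^m(P_VA\cap P_VB)>0$ on a $\munm$-positive set of planes. The only cosmetic difference is that the paper takes $\mu,\nu$ with finite $m$-energy directly (via Corollary \ref{FrostEner}) rather than routing through Frostman exponents $s,t>m$, which changes nothing of substance.
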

\begin{proof}
Pick $\mu\in\Ma(A)$ and $\nu\in\Ma(B)$ both with finite $m$ energy. As seen in the proof of Lemma \ref{FrostAbs} we know that for $\munm$-a.e. V, the measures $P_{V\#}\mu$ and $P_{V\#}\nu$ are $L^2(V)$ functions, we denote these functions by $\mu_V$, and $\nu_V$. By H\"{o}lder's inequality, the product $\mu_V\nu_V$ is in $L^1(V)$ for  $\munm$-a.e. $V$. Now we note that if $\int_V\mu_V\nu_Vd\Ha^m>0$, then $\Ha^m(spt(\mu_V)\cap spt(\nu_V))>0$ because if $a\in V$ is such that $\mu_V(a)\nu_V(a)>0$ then $a\in spt(\mu_V)\cap spt(\nu_V)$. Thus, we just need to show that $\int_V\mu_V\nu_Vd\Ha^m>0$ for $V$ ranging over a $\munm$-positive measure subset of $\Gh$.

To this end, we note that for $a\in V$, $\widehat{\mu}_V(a)=\widehat{\mu}(a)$, and similarly for $\nu$. Applying Plancherel's theorem on $V$, and Lemma \ref{desintegrate} we get,\\

\begin{equation}\label{L1comp}
\begin{split}
&c\int_{\Gh}\int_V\mu_V(a)\nu_V(a)d\Ha^m(a)d\munm(V)\\
&= c\int_{\Gh}\int_V\widehat{\mu}_V(a)\overline{\widehat{\nu}}_V(a)d\Ha^m(a)d\munm(V) \\
&= c\int_{\Gh}\int_V\widehat{\mu}(a)\overline{\widehat{\nu}}(a)d\Ha^m(a)d\munm(V)\\
&=\int_{\Rnn}|x|^{m-2n}\widehat{\mu}(x)\overline{\widehat{\nu}}(x)dx \\ 
&= I_m(\mu,\nu)>0.
\end{split}
\end{equation}
\end{proof}

To prove part (2), a statement at the level of measures is proven first. This will directly imply the desired result.

\begin{theorem}\label{isoprojmeasures}
Assume $\mu,\nu$ are compactly supported Radon measures in $\Rnn$ with finite $s$ and $t$ energy respectively.
\begin{enumerate}
\item If $s+t= 2m$, $\munm(V\in\Gh:P_V(spt \mu)\cap P_V(spt \nu)\neq\varnothing)>0$.
\item If  $s>2m,$ and $t>2m$, then $\munm(V\in\Gh:Int(P_V(spt \mu)\cap P_V(spt \nu))\neq\varnothing)>0$.
\end{enumerate}
\end{theorem}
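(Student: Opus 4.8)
The plan is to handle both parts with a single mechanism: push the measures onto the isotropic planes, use the disintegration formula of Lemma~\ref{desintegrate} together with the energy bounds \eqref{holderen}--\eqref{JointEner} to show that the relevant projected measures have \emph{continuous} densities for $\munm$-a.e.\ $V$, and then read off the geometry from the positivity of the mutual energy $I_m(\mu,\nu)$. Throughout write $\mu_V=P_{V\#}\mu$ and $\nu_V=P_{V\#}\nu$, and recall that for $v\in V$ one has $\widehat{\mu_V}(v)=\muhat(v)$ and $\widehat{\nu_V}(v)=\nuhat(v)$, as in the proof of Theorem~\ref{caseA}. Note also that finite $s$-energy on a compact set forces finite $\sigma$-energy for every $\sigma\le s$, so in particular $I_m(\mu),I_m(\nu),I_m(\mu,\nu)<\infty$, while $I_m(\mu,\nu)>0$ since $\mu,\nu$ are nonzero positive measures.

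For part~(2) I would upgrade the $L^2$-statement behind Lemma~\ref{FrostAbs} to an $L^1$-statement. Applying Lemma~\ref{desintegrate} to $|\muhat|$ gives
\[\int_{\Gh}\int_V|\widehat{\mu_V}(v)|\,d\Ha^m(v)\,d\munm(V)=c^{-1}\int_{\Rnn}|x|^{m-2n}|\muhat(x)|\,dx,\]
and splitting the weight as $|x|^{(s-2n)/2}\cdot|x|^{m-2n-(s-2n)/2}$ and applying Cauchy--Schwarz bounds the right-hand side by $I_s(\mu)^{1/2}\big(\int|x|^{2m-2n-s}\,dx\big)^{1/2}$. The hypothesis $s>2m$ is exactly what makes $\int_{|x|>1}|x|^{2m-2n-s}\,dx$ converge, while boundedness of $\muhat$ and $m>0$ handle $\int_{|x|<1}$; hence $\widehat{\mu_V}\in L^1(V)$ for $\munm$-a.e.\ $V$, and by Lemma~\ref{prop}(5) the measure $\mu_V$ has a continuous density $g_V\ge0$. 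The same argument with $t>2m$ produces a continuous density $h_V\ge0$ for $\nu_V$. Now the identity \eqref{L1comp} of Theorem~\ref{caseA} gives $\int_{\Gh}\int_V g_Vh_V\,d\Ha^m\,d\munm=c^{-1}I_m(\mu,\nu)>0$, so $\int_V g_Vh_V>0$ on a $\munm$-positive set of planes. On each such plane the set $\{g_V>0\}\cap\{h_V>0\}$ is nonempty and open, and by continuity it is contained in $spt(\mu_V)\cap spt(\nu_V)=P_V(spt(\mu))\cap P_V(spt(\nu))$; thus $Int(P_V(spt(\mu))\cap P_V(spt(\nu)))\neq\varnothing$.

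For part~(1) I would introduce the reflected measure $\widetilde\nu$, $\widetilde\nu(E)=\nu(-E)$, and the convolution $\lambda=\mu\ast\widetilde\nu$, a compactly supported positive measure with $spt(\lambda)\subseteq spt(\mu)-spt(\nu)$ and, by Lemma~\ref{prop}(2) together with $\widehat{\widetilde\nu}=\overline{\nuhat}$, Fourier transform $\widehat{\lambda}=\muhat\,\overline{\nuhat}$. Writing $\lambda_V=P_{V\#}\lambda$, so that $\widehat{\lambda_V}(v)=\muhat(v)\overline{\nuhat(v)}$ on $V$, the same disintegration yields
\[\int_{\Gh}\int_V|\widehat{\lambda_V}(v)|\,d\Ha^m(v)\,d\munm(V)=c^{-1}\int_{\Rnn}|x|^{m-2n}|\muhat(x)||\nuhat(x)|\,dx.\]
Here the relation $s+t=2m$ is precisely what lets the Cauchy--Schwarz split $|x|^{(s-2n)/2}\cdot|x|^{(t-2n)/2}$ exhaust the weight $|x|^{m-2n}$ with no leftover power, so by \eqref{holderen} the right-hand side is $\lesssim(I_s(\mu)I_t(\nu))^{1/2}<\infty$. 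Thus $\widehat{\lambda_V}\in L^1(V)$ for $\munm$-a.e.\ $V$ and $\lambda_V$ has a continuous density $\ell_V\ge0$. Fourier inversion on $V$ gives $\ell_V(0)=\int_V\widehat{\lambda_V}$, whence, integrating and invoking \eqref{JointEner},
\[\int_{\Gh}\ell_V(0)\,d\munm(V)=c^{-1}\int_{\Rnn}|x|^{m-2n}\muhat(x)\overline{\nuhat(x)}\,dx,\]
which is a positive multiple of $I_m(\mu,\nu)$ and hence strictly positive. Therefore $\ell_V(0)>0$ on a $\munm$-positive set of planes; by continuity of $\ell_V$ this forces $0\in spt(\lambda_V)=P_V(spt(\lambda))$, and choosing $z\in spt(\lambda)$ with $P_Vz=0$ and writing $z=a-b$ with $a\in spt(\mu)$, $b\in spt(\nu)$ gives $P_Va=P_Vb$, i.e.\ $P_V(spt(\mu))\cap P_V(spt(\nu))\neq\varnothing$.

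The main obstacle is the pair of integrability estimates, namely placing $\widehat{\mu_V}$ (respectively $\widehat{\lambda_V}$) in $L^1(V)$ rather than merely in $L^2(V)$: it is the \emph{continuity} of the resulting density, not just its existence, that converts a positive average into a pointwise geometric statement (a nonempty interior in part~(2), a point of the support sitting over the origin in part~(1)). Verifying that the stated exponent hypotheses $s>2m$ and $s+t=2m$ are exactly the thresholds making the weighted integrals converge in both regimes $|x|\le1$ and $|x|\ge1$ is the crux; the remaining points---finiteness and positivity of $I_m(\mu,\nu)$, and the identification $spt(P_{V\#}\cdot)=P_V(spt(\cdot))$ for compactly supported measures---are routine.
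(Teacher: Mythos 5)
Your proposal is correct and takes essentially the same route as the paper: both parts run on Lemma \ref{desintegrate}, the energy bounds \eqref{holderen}--\eqref{JointEner}, the positivity of $I_m(\mu,\nu)$, and the fact that an $L^1$ Fourier transform yields a continuous density whose positivity at a point forces that point into the support. The only cosmetic differences are that you form $\mu\ast\widetilde\nu$ in $\Rnn$ and then project, whereas the paper convolves the projected measures $\widetilde{P_{V\#}\mu}\ast P_{V\#}\nu$ directly on $V$ (these coincide since $P_V$ is linear), and that in part (2) you rederive the $L^1$ estimate that the paper outsources to \cite[Theorem 3.4]{Mattila1}.
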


\begin{proof}
\begin{enumerate}

\item 
Since $m=\frac{s+t}{2}$, we have that
\[\int_{\Rnn}|x|^{m-2n}|\muhat(x)\nuhat(x)|dx\lesssim (I_s(\mu)I_t(\nu))^{1/2}<\infty.\]
It follows from Lemma \ref{desintegrate}, together with the fact that, for $v\in V,\  P_{V\#}\mu(v)=\muhat(v)$ and $\widehat{P_{v\#}\nu(v)}=\nuhat(v)$, that 
\[\int_{\Gh}\int_V|\widebar{\widehat{P_{V\#}\mu}}\widehat{P_{V\#}\nu}| d\Ha^md\munm<\infty,\]
so that $\widebar{\widehat{P_{V\#}\mu}}\widehat{P_{V\#}\nu}\in L^1(V)$ for $\munm-$almost every $V$. For such $V$, consider the convolution measure $\widetilde{P_{V\#}\mu}\ast P_{V\#}\nu$ given, as in \eqref{MeasConv}, by

\[\int_V\varphi d(\widetilde{P_{V\#}\mu}\ast P_{V\#}\nu)=\iint_{V\times V}\varphi(v-u)dP_{V\#}\mu(u)dP_{V\#}\nu(v)\]
for any non-negative, bounded continuous function $\varphi$. 
It should be clear that $\widetilde{P_{V\#}\mu}\ast P_{V\#}\nu\in\Ma(V)$, so computing its Fourier transform renders
\[(\widetilde{P_{V\#}\mu}\ast P_{V\#}\nu)^{\widehat{}}=\widebar{\widehat{P_{V\#}\mu}}\widehat{P_{V\#}\nu}\in L^1(V).\]
By (4) in Lemma \ref{prop} (\cite[Theorem 3.4]{Mattila1}), it follows that $\widetilde{P_{V\#}\mu}\ast P_{V\#}\nu\ll\Ha^m$ with continuous density, call it $g_V$. Since $g_V=(\widebar{\widehat{P_{V\#}\mu}}\widehat{P_{V\#}\nu})^{\widecheck{}}$, we have that \[g_V(0)=\int_V\widebar{\widehat{P_{V\#}\mu}}\widehat{P_{V\#}\nu}d\Ha^m.\]
 By Lemma \ref{desintegrate} 
 \begin{align*}
 c&\int_{\Gh}\int_V\widebar{\widehat{P_{V\#}\mu}}(v)\widehat{P_{V\#}\nu}(v)d\Ha^{m}(v)d\munm (V)\\
 &=\int_{\Gh}\int_V\widebar{\muhat}(v)\widehat{\nu}(v)d\Ha^{m}(v)d\munm (V)\\
 &=\int_{\Rnn}|x|^{m-2n}\widebar{\muhat}(x)\nuhat(x)dx=I_{m}(\nu,\mu)>0.
 \end{align*}
It follows that $g_V(0)=\int_V\widebar{\widehat{P_{V\#}\mu}}\widehat{P_{V\#}\nu}d\Ha^m>0$ for a $\munm-$positive measure set of planes $V\in\Gh$. Since $spt(g_V)=spt(\widetilde{P_{V\#}\mu})+spt(P_{V\#}\nu)=spt(P_{V\#}\nu)-spt(P_{V\#}\mu)$, and $0\in spt(g_V)$, it follows that there is $u\in spt(P_{V\#}\mu)$ and $v\in spt(P_{V\#}\nu)$ such that $0=v-u$, or equivalently, $v=u$.
Hence, $spt(\mu_V)\cap spt(P_{V\#}\nu)\neq\varnothing$. This proves the claim.
 \bigskip

\item Now assume that $s,t>2m$. Then, by Theorem 3.4 in \cite{Mattila1}, both $P_{V\#}\mu$ and $P_{V\#}\nu$ have continuous densities $\mu_V$ and $\nu_V$ for  $\munm$-a.e. $V$. Since, by \eqref{L1comp} we know that for a $\munm$-positive set $\int_V\mu_V\nu_Vd\Ha^m>0$, it follows that for such $\munm$-positive set, $\mu_V\nu_V$ remains positive in some open subset of $V$. Since $spt(\mu_V\nu_V)\subset spt(\mu_V)\cap spt(\nu_V)$, the theorem follows.

\end{enumerate}
\end{proof}

As mentioned above, as a direct corollary of part (2) of Lemma \ref{isoprojmeasures}, we obtain part (2) of Theorem \ref{isoprojint}.

Now lets proceed with the proof of part (3) of Theorem \ref{isoprojint}.

\begin{proof}
We begin as usual, by choosing $s,t\in\R$ such that $m<s<\dim A$, $0<t< \dim B$ and $s+t>2m$. By Frostman's lemma, one can then pick $\mu \in \Ma(A)$ and $\nu\in\Ma(B)$ with finite $s$ and $t$ energies respectively. Since $s>m$ we have that for $\munm-a.e.\ V,$ $P_{V\#}\mu\ll d\Ha^m$ with density $\mu_V\in L^2(V)$. Following the same lines as the proof of part (1) of Theorem \ref{isoprojmeasures}, we aim to finds a family of measures $\rho_V\in\Ma(P_V(A)\cap P_V(B))$, but this time we also require that $I_t(\rho_V)<\infty$. In principle, we would like to use the family of measures $\rho_V=\mu_VdP_{V\#}\nu$. However, a priori, we do not know that $\mu_V\in L^1(P_{V\#}\nu)$. Instead, let $\mu_{\delta}=\mu*\psi_\delta$ be the standard convolution approximation to $\mu$, where $\psi$ is smooth and compactly supported in $B(0,1)$. Using Plancherel's theorem and Lemma \ref{desintegrate} we have,

\begin{equation}\label{Second}
\begin{aligned}
&\int_{\Gh}\int_{V}P_{V\#}\mu_{\delta} dP_{V\#}\nu(v)d\munm\\
&=\kappa(n,m)\int|x|^{m-2n}\widehat{\mu_{\delta}}(x)\nuhat(x)dx.\\
\end{aligned}
\end{equation}
Now, as $\delta\to0$, $\widehat{\mu_{\delta}}=\muhat\widehat{\psi_{\delta}}\to\muhat\widehat{\psi}(0)=\muhat$. Hence the right hand side of (\ref{Second}) goes to $\kappa'(n,m)I_m(\mu,\nu)$. By the choice of $\mu$ and $\nu$, and since $s+t>2m$, we know $0<I_m(\mu,\nu)<\infty$. Hence, $\exists\ c,C>0$ such that $\forall$ $\delta>0$,
\begin{equation}\label{control}
c<\int\int P_{V\#}\mu_{\delta}(v) dP_{V\#}\nu(v)d\munm<C.
\end{equation}
The aim is now to show that $\mu_V\in L^1(P_{V\#}\nu)$ and that 
\begin{equation}\label{BoundforDCT}
\iint\mu_VdP_{V\#}\nu d\munm=\lim_{\delta\to0}\iint P_{V\#}\mu_{\delta}dP_{V\#}\nu d\munm.
\end{equation}
This, together with \eqref{control} would show that
\[\mu_VdP_{V\#}\nu\in\Ma(P_VA\cap P_VB).\]
To prove \eqref{BoundforDCT} we follow a similar argument to the one used in \cite{MattilaOrponen} for the analogous statement.

First not that since $m-2\frac{s-m}{2}=2m-s<t$, and $I_{2m-s}(P_{V\#}\nu)\lesssim I_t(P_{V\#}\nu)$, using Lemma \ref{desintegrate}
\begin{equation}\label{boundnu}
\begin{split}
\int I_{2m-s}(P_{V\#}\nu)d\munm&\lesssim\int I_t(P_{V\#}\nu)d\munm\\
&\lesssim\iint |v|^{m-t}|\widehat{P_{V\#}\nu}|^2d\Ha^md\munm\\
&\lesssim \int_{\Rnn} |x|^{2n-t}|\nuhat(x)|^2 dx\lesssim I_{t}(\nu)<\infty.
\end{split}
\end{equation}
Next, note that we can write $P_{V\#}\mu_{\delta}(v)=\psi_{\delta}^V*P_{V\#}\mu$, where $\psi^V(v)=\int_{\Vp}\psi(v+w)d\Ha^{2n-m}(w)$ for $v\in V$. 

Once again by Lemma \ref{desintegrate},

\[\iint |v|^{s-m}|\widehat{\mu_V}(v)|^2d\Ha^m(v) d\munm(V) =\kappa(n,m)\int|x|^{s-2n}|\muhat(x)|^2dx\lesssim I_s(\mu)<\infty\]
which tells us that for $\munm$ almost every V, $\mu_V$ is in the fractional Sobolev space $H^{\frac{s-m}{2}}(V)$, with 
\begin{equation}\label{boundmu}
\int_{\Gh}||\mu_V||^2_{H^{\frac{s-m}{2}}(V)}d\munm(V)<\infty.
\end{equation}

Therefore, taking $\alpha=\frac{s-m}{2}$ in \cite[Theorem 17.3]{Mattila1}, we conclude that for $\munm-$almost every $V$, the maximal function $M_VP_{V\#}\mu(v)=\sup_{\delta>0}|\psi_{\delta}^V*P_{V\#}\mu(v)|$ is in the space $L^1(P_{V\#}\nu)$ with
\[\int_VM_VP_{V\#}\mu(v)dP_{V\#}\nu(v)\lesssim [I_{s-2m}(P_{V\#}\nu)]^{1/2}||\mu_V||^2_{H^{\frac{s-m}{2}}(V)}.\]
By the dominated convergence theorem it follows that for $\munm-$almost every $V\in\Gh$ the sequence $P_{V\#}\mu_{\delta}$ converges to $f_V:=\mu_V\lfloor_{spt(P_{V\#}\nu)}$ in $L^1(P_{V\#}\nu)$.
That is to say, the sequence of function 
\[F_{\delta}(V)=\int_V P_{V\#}\mu_{\delta}(v)dP_{V\#}\nu(v)\]
converges to the function 
\[F(V)=\int_V f_V(v)dP_{V\#}\nu(v),\]
for $\munm-$almost every $V\in\Gh$. But Moreover, for such $V$
\[|F_{\delta}(V)|\lesssim [I_{s-2m}(P_{V\#}\nu)]^{1/2}||\mu_V||^2_{H^{\frac{s-m}{2}}(V)}.\]
So by Cauchy-Schwarz inequality, \eqref{boundnu}, and \eqref{boundmu}, 
\begin{multline*}
\int_{\Gh} [I_{s-2m}(P_{V\#}\nu)]^{1/2}||\mu_V||^2_{H^{\frac{s-m}{2}}(V)}d\munm(V)\\
\leq\left(\int I_{s-2m}(P_{V\#}\nu)d\munm \right)^{1/2}\left(||\mu_V||^2_{H^{\frac{s-m}{2}}(V)}d\munm \right)^{1/2}<\infty.
\end{multline*} 
Once more by dominated convergence this gives us $\eqref{BoundforDCT}$, and by \eqref{control} we get

\begin{equation}\label{control2}
c<\iint f_V(v)dP_{V\#}\nu(v)d\munm(V)=\iint P_{V\#}\mu_{\delta}(v) dP_{V\#}\nu(v)d\munm<C.
\end{equation}

With this, consider the measure $f_VP_{V\#}\nu$. By (\ref{control2}) we know that for $\munm$-positively many $V,\ f_V$ is positive and finite on a set of positive $P_{V\#}\nu$ measure. Therefore $f_VP_{V\#}\nu$ is a non-trivial measure supported on $spt(\mu_V)\cap spt(\nu_V)\subset P_V(A)\cap P_V(B)$. That is, $f_VP_{V\#}\nu\in\Ma(P_V(A)\cap P_V(B))$. For each such $V$, pick a large enough constant $C_V$ so that the measure $\mathbbm{1}_{\{f_V\leq C_V\}}f_VP_{V\#}\nu$ is still non-trivial, and so that $\mathbbm{1}_{\{f_V\leq C_V\}}f_VP_{V\#}\nu$ has finite t-energy. Since $t$ can be chosen arbitrarily close to $\dim B$ the claim follows. 
\end{proof}


\section{Intersections with isotropic planes}

Another problem, which is related to projection theorems, and that has been studied in Euclidean space by many, for instance P. Mattila in \cite{Mattila3}, Mattila and Orponen  in \cite{MattilaOrponen} among others, and even in non-Euclidean setting by Balogh, F\"assler, Mattila and Tyson in \cite{BFMT}, is that of planar slices of sets. We aim to study this problem in Euclidean space but restricted to slices by isotropic planes. The general question can be stated as follows, for $A\subset\Rnn$ with $\dim A=\alpha$ what can be said about the ``size" (i.e. measure and/or dimension) of the ``slice" $A\cap(\Vp+x)$ where $V\in\Gh$ and $x\in\Rnn$? Ideally one would hope to obtain a result along the lines of Theorem \ref{SLICE}. It is important to remark that by $\Vp$ it is meant the standard orthogonal complement of $V\in\Gh$, in particular, $\Vp$ need not be isotropic.

As discussed before, for $A\subset\Rnn$  with $\dim A=\alpha>m$ we have that $\Ha^m(P_VA)>0$ for $\munm$-a.e. $V\in\Gh$. This implies that the set $v\in V$ such that $A\cap P^{-1}_V(v)\neq\varnothing$, has positive $m$-measure  for almost all $V$.  Note that $P_V^{-1}(v)=\Vp+v$, this hints at the close relation between this question and the questions about projections. This is all encapsulated in Theorem  \ref{isoslice} which is recalled here,

\begin{reptheorem}{isoslice}
Let $A\subset\Rnn$ be such that for some $m<s\leq n$, $0<\Ha^s(A)<\infty$. Then there exists a Borel set $B\subset\Rnn$ with $\dim B \leq m$, such that for all $x\in\Rnn\setminus B$,
 \[\munm(V\in \Gh: \dim [A\cap(\Vp+x) ]=s-m)>0.\]
\end{reptheorem}

In order to prove this, we will use ``sliced" measures. For the sake of completeness the construction of sliced measures context of the isotropic Grassmannian is included. A detailed overview of these measures in the context of the standard Grassmannian is contained in \cite[Section 10.1]{mattila2}. Fix $V\in\Gh$. Note that since $V$ is isotropic, $m\leq n$ because the maximal dimension of an isotropic plane is half the dimension of the space. Therefore $\dim \Vp=2n-m\geq n$, in particular, unless $m=n$, $\Vp$ cannot be isotropic. Nevertheless, we endow $\Gh^{\perp}$ with a ``natural" measure, $\munm^{\perp}$, via $\munm^{\perp}(\Omega)=\munm(\Omega^{\perp})$, where $\Omega^{\perp}=\{\Vp:V\in\Omega\}$.
One property that will be of great importance for dimension estimates is the following measure bound,

\begin{equation}\label{mubydelta}
\munm(\{V:|P_V(x)|\leq\delta\})\leq c\delta^m|x|^{-m}
\end{equation}
A proof of this fact can be found in \cite{BFMT}.

Given a measure $\mu\in\Ma(\Rnn)$ and function $\varphi\in\mathcal{C}^{+}_{c}(\Rnn)$ one can define a new measure by $\mu_{\varphi}(A)=\int_A\varphi d\mu.$ We can see that $P_{V\#}\mu_{\varphi}$ is a measure in $\Ma(V)$ so by the differentiation theorem, the derivative

\[\mu_{\Vp+v}(\varphi)\\
:=\lim_{\delta\to0}(2\delta)^{-m}P_{V\#}\mu_{\varphi}(B(v,\delta))\\
=\lim_{\delta\to0}(2\delta)^{-m}\int_{\mathcal{N}(\Vp+v,\delta)}\varphi d\mu,
\]
exists and is finite for $\Ha^m$-a.e. $v\in V$. Here $\mathcal{N}(\Vp+v,\delta)$ is the $\delta$ neighborhood around the plane $\Vp+v$. 
One can check, after some work, that $\mu_{\Vp+v}$ defines a positive linear functional on $\mathcal{C}^{+}_{c}(\Rnn)$. Therefore, by Riesz representation theorem we can associate a positive Radon measure to $\mu_{\Vp+v}$ that we denote in the same way. As mentioned before, the reader is referred to section 10.1 of \cite{mattila2} for the details.  This measure is now supported on $(\Vp+v)\cap spt\mu$.

Given two Radon measures, $\lambda$ and $\gamma$, the Radon-Nikodym derivative $\frac{d\lambda}{d\gamma}$ satisfies \[\int_A\frac{d\lambda}{d\gamma}(x)d\gamma(x)\leq\lambda(A),\]
for all Borel sets $A$, and with equality whenever $\lambda\ll\gamma$ (Theorem 2.12, \cite{mattila2}). Therefore, for any Borel set $B\subset V$ the measure $\mu_{\Vp+v}$ satisfies,
\begin{equation}\label{bound}
\int_B\int g d\mu_{\Vp+v} d\Ha^m(v)\leq\int_{P^{-1}_V(B)}g d\mu,
\end{equation}
for any Borel function $g$, and with equality whenever $P_{V\#}\mu\ll\Ha^m$.
In this case, taking $g=\mathbbm{1}_{P_V^{-1}(B)}$, 
\begin{equation}\label{PB}
\mu(P_V^{-1}(B))=\int_{B}\mu_{\Vp+v}(\Rnn)d\Ha^m(v)
\end{equation}
However, denoting by $\mu_V$ the density of $P_{V\#}\mu$, the definition of the push-forward measure tells us that
\begin{equation}\label{push}
\mu(P_V^{-1}(B))=\int_B\mu_{V}(v)d\Ha^m(v).
\end{equation}
Since this is true for any Borel set $B\subset V$, for $\Ha^m-$almost every $v\in V$
\begin{equation}\label{ProjSlice}
\mu_{\Vp+v}(\Rnn)=\mu_V(v)
\end{equation}
In particular, if $P_{V\#}\mu\ll\Ha^m$ then,
\begin{equation}\label{equal}
\mu(\Rnn)=\int\mu_{\Vp+v}(\Rnn)d\Ha^m(v).
\end{equation}

Using Lemma \ref{desintegrate} one may bound the energies $I_{s-m}(\mu_{\Vp+v})$ by $I_{s}(\mu)$ as follows,

\begin{lemma}\label{10.7analogue}
There is a constant $c=c(n,m)$ s.t. 
\[\int_{\Gh}\int_VI_{s-m}(\mu_{\Vp+v})d\Ha^m(v)d\munm(V)\leq c I_s(\mu).\]
\end{lemma}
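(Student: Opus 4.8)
The plan is to reduce the claimed energy bound to the measure estimate \eqref{mubydelta} by passing through the differentiation definition of the sliced measures recalled above. Recall that, tested against $\varphi\in\mathcal{C}_c^+(\Rnn)$, the measure $\mu_{\Vp+v}$ is the weak limit of the normalized restrictions $\nu_\delta:=(2\delta)^{-m}\mu\lfloor\mathcal{N}(\Vp+v,\delta)$ as $\delta\to0$. Since the Riesz kernel $(x,y)\mapsto|x-y|^{-(s-m)}$ is nonnegative and lower semicontinuous, the energy functional $\lambda\mapsto I_{s-m}(\lambda)$ is lower semicontinuous under weak convergence of measures. I would therefore first record the pointwise bound
\[
I_{s-m}(\mu_{\Vp+v})\leq\liminf_{\delta\to0}I_{s-m}(\nu_\delta)=\liminf_{\delta\to0}(2\delta)^{-2m}\iint_{\mathcal{N}(\Vp+v,\delta)^2}|x-y|^{-(s-m)}\,d\mu(x)\,d\mu(y),
\]
valid for $\Ha^m$-a.e. $v\in V$ and every $V\in\Gh$.

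Next I would integrate this inequality in $v$ and in $V$, moving the $\liminf$ outside both integrals by Fatou's lemma (legitimate since all integrands are nonnegative). The geometric heart of the argument is already encoded in the construction: by definition $\mathcal{N}(\Vp+v,\delta)=P_V^{-1}(B(v,\delta))$, so $x\in\mathcal{N}(\Vp+v,\delta)$ exactly when $|P_Vx-v|\leq\delta$. Consequently, for fixed $x,y$,
\[
\int_V\mathbbm{1}_{\mathcal{N}(\Vp+v,\delta)}(x)\,\mathbbm{1}_{\mathcal{N}(\Vp+v,\delta)}(y)\,d\Ha^m(v)=\Ha^m\big(B(P_Vx,\delta)\cap B(P_Vy,\delta)\big)\leq c\,\delta^m,
\]
and this quantity vanishes unless $|P_V(x-y)|\leq2\delta$. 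After applying Fubini to exchange $\munm$ with $\mu\times\mu$, the remaining inner average becomes $\munm(\{V:|P_V(x-y)|\leq2\delta\})$, which by \eqref{mubydelta} is at most $c(2\delta)^m|x-y|^{-m}$. The normalization factor $(2\delta)^{-2m}$ is then cancelled exactly by the two $\delta^m$ gains, leaving
\[
\int_{\Gh}\int_VI_{s-m}(\mu_{\Vp+v})\,d\Ha^m(v)\,d\munm(V)\leq c\iint|x-y|^{-(s-m)}|x-y|^{-m}\,d\mu(x)\,d\mu(y)=c\,I_s(\mu),
\]
which is the claim.

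The steps requiring the most care are the weak lower semicontinuity bound on $I_{s-m}$ and the interchange of the $\liminf$ with the double integration; the former is standard for nonnegative lower semicontinuous kernels, while the latter follows directly from Fatou. A secondary technical point is verifying that $\nu_\delta\to\mu_{\Vp+v}$ weakly for $\Ha^m$-a.e. $v$, which is precisely the content of the differentiation construction and is where the finiteness $I_s(\mu)<\infty$ enters to guarantee that the slice energies are finite for almost every $v$. No structural input about $\Gh$ beyond the measure bound \eqref{mubydelta} is needed, so the argument is a direct transcription of the Euclidean slicing estimate to the isotropic setting.
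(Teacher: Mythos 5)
Your proof is correct, and it takes a genuinely (if modestly) different route from the paper's. The paper thickens only \emph{one} factor of the energy: it bounds $\int|x-y|^{m-s}\,d\mu_{\Vp+v}(x)$ by $\liminf_{\delta\to0}(2\delta)^{-m}\int_{\mathcal{N}(\Vp+v,\delta)}|x-y|^{m-s}\,d\mu(x)$, keeps the second copy of $\mu_{\Vp+v}$ intact, and then eliminates that copy together with the $d\Ha^m(v)$ integration in a single stroke via the disintegration inequality \eqref{bound} applied to $B=\{v\in V:d(x,\Vp+v)\leq\delta\}$, so that $P_V^{-1}(B)=\{y:|P_V(x-y)|\leq\delta\}$; only one power $(2\delta)^{-m}$ appears, cancelled by the single $\delta^m$ coming from \eqref{mubydelta}. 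You instead thicken \emph{both} factors, which costs $(2\delta)^{-2m}$ and forces you to recover two powers of $\delta^m$: one from the geometric estimate $\Ha^m\bigl(B(P_Vx,\delta)\cap B(P_Vy,\delta)\bigr)\leq c\delta^m$ supported on $\{|P_V(x-y)|\leq2\delta\}$, and one from \eqref{mubydelta} at radius $2\delta$; the bookkeeping $(2\delta)^{-2m}\cdot\delta^m\cdot(2\delta)^m=2^{-m}$ indeed closes. What each approach buys: yours never invokes \eqref{bound} at all, so the only inputs are the defining weak convergence of the normalized restrictions and the projection bound \eqref{mubydelta}, making the argument a clean transcription of the Euclidean slicing estimate; the price is heavier measure-theoretic machinery at the first step, since the pointwise bound $I_{s-m}(\mu_{\Vp+v})\leq\liminf_\delta I_{s-m}(\nu_\delta)$ requires genuine a.e. weak convergence $\nu_\delta\to\mu_{\Vp+v}$ (countable dense family of test functions plus a.e. finiteness of upper densities, as in the Section 10.1 construction you cite) together with lower semicontinuity of the bilinear energy under weak convergence, which in turn needs vague convergence of the products $\nu_\delta\times\nu_\delta$ and hence locally bounded masses $\nu_\delta(K)$ for a.e. $v$. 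You correctly flag these as the delicate points, and they are standard, but the paper's one-sided thickening avoids them entirely: testing $\mu_{\Vp+v}$ against a single nonnegative lower semicontinuous kernel needs only the defining limit for $\mathcal{C}_c^+(\Rnn)$ functions, monotone approximation, and Fatou. Both proofs agree on the essential point that \eqref{mubydelta} is the only isotropic-specific ingredient.
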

\begin{proof}
\begin{align*}
&\int_{\Gh}\int_VI_{s-m}(\mu_{\Vp+v})d\Ha^m(v)d\munm(V)\\
&\leq\liminf_{\delta\to0} (2\delta)^{-m}\int\int_V\int_{\Vp+v}\int_{\mathcal{N}(\Vp+v,\delta)}|x-y|^{m-s}d\mu(x)d\mu_{\Vp+v}(y)d\Ha^m(v)d\munm(V)\\
&=\liminf_{\delta\to0} (2\delta)^{-m}\int\int_V\int_{\mathcal{N}(\Vp+v,\delta)}\int_{\Vp+v}|x-y|^{m-s}d\mu_{\Vp+v}(y)d\mu(x)d\Ha^m(v)d\munm(V)\\
&=\liminf_{\delta\to0} (2\delta)^{-m}\int\int\int_{\{v\in V: d(x,\Vp+v)\leq\delta\}}\int|x-y|^{m-s}d\mu_{\Vp+v}(y)d\Ha^m(v)d\mu(x)d\munm(V)\\ 
\end{align*}

The two inner integrals can be bounded applying (\ref{bound}) where $B=\{v\in V: d(x,\Vp+v)\leq\delta\}$ so that $P^{-1}_V(B)=\{y:|P_V(x-y)|\leq\delta\}$.
Hence we get,
\begin{align*}
&\int_{\Gh}\int_VI_{s-m}(\mu_{\Vp+v})d\Ha^m(v)d\munm(V)\\
&\leq\liminf_{\delta\to0}(2\delta)^{-m}\int\int \int_{\{y:|P_V(x-y)|\leq\delta\}}|x-y|^{m-s}d\mu (y) d\mu(x)d\munm\\
&=\liminf_{\delta\to0}(2\delta)^{-m}\int \int|x-y|^{m-s}\munm(\{V:|P_V(x-y)|\leq\delta\})d\mu(y)d\mu(x)
\end{align*}
Now we can apply equation (\ref{mubydelta}) to finally get 
\begin{align*}
&\int_{\Gh}\int_VI_{s-m}(\mu_{\Vp+v})d\Ha^m(v)d\munm(V)\\
& \leq c\int\int|x-y|^{-s}d\mu(y)d\mu(x)=c I_{s}(\mu).
\end{align*}
\end{proof}

The following lemma, establishes the dimension upper bound for Theorem \ref{isoslice}.
\begin{lemma}\label{allupperbound}
Let $A\subset\Rnn$ be a Borel set with $0<\Ha^s(A)<\infty$ for some $m<s\leq2n$. Then, for all $x\in\Rnn$,\[\dim[A\cap(\Vp+x)]\leq s-m\text{ for }\munm-a.e. V\in\Gh.\]
\end{lemma}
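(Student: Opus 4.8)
The plan is to reduce to the case $x=0$ and then run a covering argument driven entirely by the measure estimate \eqref{mubydelta}. Since $A$ is an arbitrary set with $0<\Ha^s(A)<\infty$ and the quantifier on $x$ is universal, I would first replace $A$ by the translate $A-x$: because $\Ha^s(A-x)=\Ha^s(A)$ and $(A-x)\cap\Vp$ is a rigid translate of $A\cap(\Vp+x)$, proving the statement at $x=0$ for every admissible set yields the statement for all $x$. Next, using $s>m$, the single point $\{0\}$ contributes dimension $0\leq s-m$, while $A\cap\Vp\setminus\{0\}$ is the increasing union over $k$ of the pieces $A_{\rho}\cap\Vp$ with $\rho=1/k$, where $A_{\rho}:=A\cap\{|y|\geq\rho\}$. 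Since the Hausdorff dimension of a countable increasing union is the supremum of the dimensions, it suffices to prove, for each fixed $\rho>0$, that $\dim(A_{\rho}\cap\Vp)\leq s-m$ for $\munm$-a.e. $V$ (the exceptional $V$-set may depend on $\rho$, but a countable intersection over $\rho=1/k$ is still conull).

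For fixed $\rho>0$ and $\epsilon>0$, I would fix a small $\delta\in(0,\rho/2)$ and choose a cover of $A_{\rho}$ by balls $B_i=B(x_i,r_i)$, each meeting $A_{\rho}$, with $r_i<\delta$ and $\sum_i r_i^s\leq \Ha^s(A)+1$; this is possible because $\Ha^s(A_{\rho})\leq\Ha^s(A)<\infty$, and meeting $A_\rho$ forces $|x_i|\geq\rho-r_i\geq\rho/2$. For each $V$ the sets $B_i\cap\Vp$ cover $A_{\rho}\cap\Vp$ with diameter at most $2r_i\leq 2\delta$, and $B_i\cap\Vp\neq\varnothing$ holds exactly when $|P_V(x_i)|=d(x_i,\Vp)\leq r_i$. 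Hence $\Ha^{s-m+\epsilon}_{2\delta}(A_{\rho}\cap\Vp)\leq 2^{s-m+\epsilon}\sum_{i:\,|P_V x_i|\leq r_i} r_i^{s-m+\epsilon}$. Integrating in $V$ by Tonelli and applying \eqref{mubydelta} together with $|x_i|\geq\rho/2$ gives
\[
\int_{\Gh}\Ha^{s-m+\epsilon}_{2\delta}(A_{\rho}\cap\Vp)\,d\munm(V)\leq C\sum_i r_i^{s-m+\epsilon}\,r_i^m|x_i|^{-m}\leq C'\rho^{-m}\sum_i r_i^{s+\epsilon}\leq C''\rho^{-m}\delta^{\epsilon},
\]
where I used $r_i<\delta$ and $\sum_i r_i^s\leq\Ha^s(A)+1$ in the last step.

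Finally, since $\Ha^{s-m+\epsilon}_{2\delta}$ increases to $\Ha^{s-m+\epsilon}$ as $\delta\downarrow0$ while the right-hand side tends to $0$, monotone convergence forces $\int_{\Gh}\Ha^{s-m+\epsilon}(A_{\rho}\cap\Vp)\,d\munm(V)=0$. Therefore $\Ha^{s-m+\epsilon}(A_{\rho}\cap\Vp)=0$, and in particular $\dim(A_{\rho}\cap\Vp)\leq s-m+\epsilon$, for $\munm$-a.e. $V$. Letting $\epsilon$ run through a sequence tending to $0$ and intersecting the resulting conull $V$-sets completes the argument. I expect the main obstacle to be the $|x|^{-m}$ singularity in \eqref{mubydelta}, which blows up precisely at the center of the slicing family; this is exactly what the decomposition into the annular pieces $A_{\rho}$, keeping all relevant points a definite distance $\rho$ from the translation center, is designed to neutralize. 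A minor technical point to handle is the $\munm$-measurability of $V\mapsto\Ha^{s-m+\epsilon}_{2\delta}(A_{\rho}\cap\Vp)$, which follows from the standard approximation of Hausdorff content by countable covers.
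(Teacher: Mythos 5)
Your proof is correct and is essentially the paper's own argument: translate so that $x=0$, peel off the annular pieces $A\setminus B(0,\rho)$ to stay clear of the $|x|^{-m}$ singularity in \eqref{mubydelta}, cover at a small scale, and integrate the resulting content bound over $\Gh$ via Tonelli and \eqref{mubydelta}, with the measurability issue you flag handled in the paper by working with upper integrals $\int^*$ throughout. The only difference is the endgame: you inflate the exponent to $s-m+\epsilon$ so that the integrated bound vanishes like $\delta^{\epsilon}$ and then let $\epsilon\to0$, whereas the paper stays at the critical exponent $s-m$ and uses Fatou's lemma to obtain the finiteness estimate \eqref{minusball} --- a slightly stronger conclusion ($\munm$-a.e.\ slice of each annulus has finite $\Ha^{s-m}$-measure, an estimate the paper reuses later) --- though both versions suffice for the stated dimension bound.
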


\begin{proof}
The proof begins with the following,

\textbf{Claim:} It suffices to show that  for every $x\in \R^{2n}$ and $r>0$,
\begin{equation}\label{minusball}
\int^*\Ha^{s-m}[(A\setminus B(x,r))\cap(\Vp+x)]d\munm(V)\lesssim r^{-m}\Ha^s(A\setminus B(x,r)).
\end{equation}
Indeed, if \ref{minusball} holds then for each $j\in\mathbb{N}$
\[\int^*\Ha^{s-m}[(A\setminus B(x,\frac{1}{j}))\cap(\Vp+x)]\lesssim \frac{1}{j}^{-m}\Ha^s(A)<\infty,\]
so that $\dim[(A\setminus B(x,\frac{1}{j}))\cap(\Vp+x)]\leq s-m$. Since \[A\cap(\Vp+x)=\cup_{j\in\mathbb{N}}(A\setminus B(x,\frac{1}{j}))\cap(\Vp+x),\]
by the countable stability of Hausdorff dimension the Lemma follows.

All is left is to prove (\ref{minusball}). This is done using the same argument as in \cite[Theorem 6.5]{Mattila4}.\\
First, by translating by $x$, we may assume $x=0$. For each $k\in\mathbb{N}$, pick balls $B_{kj}$ such that 
\begin{enumerate}[(i)]
\item $A\setminus B(0,r)\subset\cup_{j}B_{kj}\subset \Rnn\setminus B(0,r/2),$
\item diam$B_{kj}< \frac{1}{k},$ and
\item $\sum_{j=1}^{\infty}(\text{diam}B_{kj})^s<\Ha^s(A)+1.$
\end{enumerate}
Since $B_{kj}\subset\Rnn\setminus B(0,r/2)$, (\ref{mubydelta}) says that 
\[\munm(\{V:B_{kj}\cap \Vp\neq \varnothing\})\lesssim (\frac{r}{2})^{-m}(\text{diam}B_{kj})^{m}.\]
Since $s>m$ we get 
\[\int_{\Gh}\text{diam}(B_{kj}\cap \Vp)^{s-m}d\munm\lesssim (\frac{r}{2})^{-m}(\text{diam}B_{kj})^{s}.\] By the definition of $\Ha^s$ and Fatou's lemma,
\begin{align*}
\int^* &\Ha^{s-m}[(A\setminus B(0,r))\cap \Vp]d\munm(V)\\
&\leq \int^*\liminf_{k\to\infty}\sum_{j}\text{diam}(B_{kj}\cap \Vp)^{s-m}d\munm(V)\\
&\leq \liminf_{k\to\infty}\sum_j\int\text{diam}(B_{kj}\cap \Vp)^{s-m}d\munm(V)\\
&\lesssim r^{-m}\sum_{j}(\text{diam} B_{kj})^s\leq r^{-m}(\Ha^s(A)+1).
\end{align*}
This finishes this argument. 
\end{proof}

This aids the proof of the following result which will prove useful in the proof of the dimension lower bound in Theorem \ref{isoslice}.

\begin{theorem}\label{InSlice}
Let $A\subset\Rnn$ be a Borel set with $0<\Ha^s(A)<\infty$ for some $m<s\leq2n$. Then,
\[\dim[A\cap(\Vp+x)]=s-m, \text{ for }\Ha^s\times\munm-a.e.\ (x,V)\in A\times\Gh.\]
\end{theorem}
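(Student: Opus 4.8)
The plan is to prove the two inequalities separately and then combine them by Fubini on $A\times\Gh$. For the upper bound, Lemma~\ref{allupperbound} already gives, for \emph{every} $x\in\Rnn$, that $\dim[A\cap(\Vp+x)]\le s-m$ for $\munm$-a.e.\ $V$. Viewing the exceptional set $\{(x,V):\dim[A\cap(\Vp+x)]>s-m\}$ inside $A\times\Gh$ and integrating first in $V$, Fubini gives that this set is $\Ha^s\times\munm$-null, once one checks the routine measurability of $(x,V)\mapsto\dim[A\cap(\Vp+x)]$, which follows from the measurability of the sliced-measure construction. Thus only the lower bound $\dim[A\cap(\Vp+x)]\ge s-m$ requires real work.

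For the lower bound, set $\mu=\Ha^s\lfloor A$, a finite nontrivial measure whose mass is carried by $A$. Since $0<\Ha^s(A)<\infty$, one has $I_t(\mu)<\infty$ for every $t<s$ (a standard fact; see \cite[Theorem 8.7]{mattila2}). In particular $I_m(\mu)<\infty$, and repeating the Fourier-analytic computation at the end of the proof of Lemma~\ref{FrostAbs} --- which uses only $I_m(\mu)<\infty$ rather than a pointwise Frostman bound --- yields $P_{V\#}\mu\ll\Ha^m$ for $\munm$-a.e.\ $V$, with density $\mu_V$ and $\int_V\mu_V\,d\Ha^m=\mu(\Rnn)=\Ha^s(A)>0$. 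Now fix $\sigma<s-m$ and note $\sigma+m<s$, so $I_{\sigma+m}(\mu)<\infty$. Applying Lemma~\ref{10.7analogue} with its exponent $s$ taken to be $\sigma+m$ gives
\[
\int_{\Gh}\int_V I_{\sigma}(\mu_{\Vp+v})\,d\Ha^m(v)\,d\munm(V)\le c\,I_{\sigma+m}(\mu)<\infty,
\]
so that for $\munm$-a.e.\ $V$ one has $I_{\sigma}(\mu_{\Vp+v})<\infty$ for $\Ha^m$-a.e.\ $v\in V$.

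The key step is then to upgrade this ``a.e.\ $v\in V$'' information to ``$\Ha^s$-a.e.\ $x\in A$''. Since $\Vp+x=\Vp+P_Vx$, the slice $A\cap(\Vp+x)$ depends on $x$ only through $v=P_Vx$, and the bad set $\{x:\dim[A\cap(\Vp+x)]<\sigma\}$ equals $P_V^{-1}(E_V^\sigma)$ with $E_V^\sigma=\{v\in V:\dim[A\cap(\Vp+v)]<\sigma\}$. Using the disintegration identity~\eqref{PB}, valid because $P_{V\#}\mu\ll\Ha^m$, write $\mu(P_V^{-1}(E_V^\sigma))=\int_{E_V^\sigma}\mu_{\Vp+v}(\Rnn)\,d\Ha^m(v)$. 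For $\Ha^m$-a.e.\ $v\in E_V^\sigma$ the measure $\mu_{\Vp+v}$ has finite $\sigma$-energy; since it is carried by $A\cap(\Vp+v)$, a set of dimension $<\sigma$, the energy version of Frostman's lemma (Corollary~\ref{FrostEner}) forces $\mu_{\Vp+v}$ to be trivial there, i.e.\ $\mu_{\Vp+v}(\Rnn)=0$. Hence the integral vanishes and $\mu(P_V^{-1}(E_V^\sigma))=0$ for $\munm$-a.e.\ $V$. By Fubini, $\{(x,V):\dim[A\cap(\Vp+x)]<\sigma\}$ is $\mu\times\munm$-null; letting $\sigma=s-m-1/k\uparrow s-m$ over $k\in\mathbb{N}$ gives $\dim[A\cap(\Vp+x)]\ge s-m$ for $\mu\times\munm$-a.e.\ $(x,V)$. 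Combining with the upper bound and recalling $\mu=\Ha^s\lfloor A$ yields the claimed equality.

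The main obstacle I expect is exactly this last ``transfer'' from $\Ha^m$-a.e.\ $v\in V$ to $\Ha^s$-a.e.\ $x\in A$: it is what forces the use of the concrete measure $\mu=\Ha^s\lfloor A$ rather than an arbitrary Frostman measure on $A$, and it hinges on the dichotomy that the sliced measure $\mu_{\Vp+v}$ is either trivial or, having finite $\sigma$-energy, witnesses $\dim\ge\sigma$, together with the identity~\eqref{PB}. One must also be slightly careful that $\mu=\Ha^s\lfloor A$ need not satisfy the pointwise Frostman bound required by Lemma~\ref{FrostAbs}, so the absolute continuity $P_{V\#}\mu\ll\Ha^m$ should be extracted from $I_m(\mu)<\infty$ directly, and that the sliced measure is genuinely carried by $A\cap(\Vp+v)$ up to null sets.
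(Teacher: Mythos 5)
Your overall architecture --- absolute continuity of $P_{V\#}\mu$, the energy bound of Lemma~\ref{10.7analogue}, the dichotomy ``a nontrivial sliced measure with finite $\sigma$-energy witnesses $\dim\geq\sigma$'', the transfer through \eqref{PB}, and Fubini at the end --- is exactly the skeleton of the paper's proof. But there is a genuine gap at the first step of your lower bound: the claim that $0<\Ha^s(A)<\infty$ implies $I_t(\Ha^s\lfloor A)<\infty$ for every $t<s$ is false, and it is not what the cited result in \cite{mattila2} says. That result (together with Frostman's lemma) produces \emph{some} measure $\mu\in\Ma(A)$ with finite $t$-energy; it says nothing about the restricted measure $\Ha^s\lfloor A$ itself. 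Indeed $\Ha^s\lfloor A$ can have infinite $t$-energy for $t<s$ whenever $s$ is below the ambient dimension: for instance in $\R^2$ with $s=1$, let $A=\bigcup_k A_k$ where $A_k$ is a curve of length $2^{-k}$ coiled inside a ball of radius $\epsilon_k=2^{-4k/t}$, the balls pairwise disjoint; then $\Ha^1(A)=1$, but $I_t(\Ha^1\lfloor A_k)\geq \Ha^1(A_k)^2(2\epsilon_k)^{-t}\gtrsim 2^{2k}$, so $I_t(\Ha^1\lfloor A)=\infty$. (The same decoupling of mass from diameter works for non-integer $s$ by packing many tiny scaled copies of an $s$-set into a small ball.) Since your entire lower-bound argument --- the absolute continuity $P_{V\#}\mu\ll\Ha^m$ extracted from $I_m(\mu)<\infty$, and the finiteness of $\int\int I_\sigma(\mu_{\Vp+v})$ via Lemma~\ref{10.7analogue} applied with exponent $\sigma+m$ --- rests on finite energies of $\mu=\Ha^s\lfloor A$, the proof collapses here. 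Your closing remark shows you saw that the pointwise Frostman bound is missing but assumed energy finiteness survives without it; in fact it is precisely the pointwise bound $\mu(B(x,r))\leq r^s$ that yields $I_t(\mu)<\infty$ for all $t<s$ (this is the computation opening Lemma~\ref{FrostAbs}), and mere finiteness of $\Ha^s(A)$ does not.

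The paper avoids this by running your argument in contradiction form: assume the set of $x\in A$ where the lower bound fails for a $\munm$-positive set of planes has positive $\Ha^s$-measure; pass to a compact subset $F$ of it with $\Ha^s(F)>0$ and a fixed deficit $\sigma<s$; invoke Frostman's lemma to obtain $\mu\in\Ma(F)$ with the genuine bound $\mu(B(x,r))\leq r^s$. For such $\mu$ all the needed energies are finite, and then your own dichotomy and disintegration steps (which are correct, and are the paper's) produce $\int\int I_{\sigma-m}(\mu_{\Vp+v})\,d\Ha^m\,d\munm=\infty$, contradicting Lemma~\ref{10.7analogue}. If you prefer your direct formulation, it can be repaired: by the upper density theorem, $\Ha^s$-almost all of $A$ is exhausted by countably many Borel pieces $A_j$ with $\Ha^s(A_j\cap B(x,r))\leq C_j r^s$ for all $x$ and $r$, so each $\mu_j=\Ha^s\lfloor A_j$ is comparable to a Frostman measure; run your argument on each $\mu_j$ (the slices of $A_j$ sit inside those of $A$, so the dichotomy still bounds $\dim[A\cap(\Vp+v)]$ from below) and conclude by countable additivity. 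One of these repairs is necessary; as written the proof is not correct.
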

\begin{proof}
Suppose there is a $\Ha^s-$positive measure subset of $A$ such that for every $x$ in that subset
\[dim[A\cap(\Vp+x)]<s-m\ \text{for a }\munm-\text{positive measure subset of }\Gh.\]
By regularity of $\Ha^s$ we may assume such subset is compact. This is to say, there is some $m<\sigma<s$ and compact $F\subset A$ such that $\Ha^s(F)>0$ and $\forall\ x\in F$,
\[\munm(\{V:dim[A\cap(\Vp+x)]<\sigma-m\})>0.\]
By Frostman Lemma, there is $\mu\in\Ma(F)$ such that $\mu(B(x,r))\leq r^s$ for all $x\in\Rnn, r>0$. By Tonelli's theorem,
\begin{align*}
\int\mu(\{x:\dim &[A\cap(\Vp+x)]<\sigma-m\})d\munm(V)\\
&=\int\munm\{V:dim[A\cap(\Vp+x)]<\sigma-m\}d\mu(x)>0.
\end{align*} 
So there exist a compact set $G\subset \Gh$ such that $\munm(G)>0$ and for all $V\in G$,
\[\mu(\{x:\dim[A\cap(\Vp+x)]<\sigma-m\})>0.\]
By Lemma \ref{FrostAbs}, $P_{V\#}\mu\ll\Ha^m$ for $\munm-$almost every $V\in\Gh$, and since \[P_{V\#}\mu(\{v\in V: \dim[A\cap(\Vp+v)]<\sigma-m\})= \mu(\{x\in \Rnn: \dim[A\cap(\Vp+x)]<\sigma-m\})>0,\]

we get that for $V \in G$
\[\Ha^m(\{v\in V: \dim[A\cap(\Vp+x)]<\sigma-m\})>0.\]
Now, if $\mu_{\Vp+v}(\Rnn)>0$ with $\dim[A\cap(\Vp)+v]<\sigma-m$ one has $I_{\sigma-m}(\mu_{\Vp+v})=\infty$, so by (\ref{equal}) we get
\[\int_{\Gh}\int_V I_{\sigma-m}(\mu_{\Vp+v})d\Ha^m(v)d\munm(V)=\infty,\]
which contradicts Lemma \ref{10.7analogue}. Hence, it must be that for $\Ha^s-$almost every $x\in A$
\[\dim[A\cap(\Vp+x)]\geq s-m\text{ for } \munm-a.e. V\in\Gh.\]

Combining this with Lemma \ref{allupperbound}, we have that for $\Ha^s-$almost every $x\in A$
\[\dim[A\cap(\Vp+x)]=s-m, \text{ for }\munm-a.e.\ V\in \Gh.\]
By Tonelli's theorem, one may change the order of the measures at will to obtain that the dimension equality holds almost surely in the product measure space $A\times\Gh$.
\end{proof}

If one changes the roles of $V$ and $x$ in Lemma \ref{allupperbound}, a stronger result holds. That is,
\begin{equation}
\text{For all } V\in\Gh,\ \Ha^{s-m}[ A\cap(\Vp+x)]\leq\infty, \text{ for }\Ha^m-a.e.\ x\in V.
\end{equation}
This is simply a special case of Eilenberg's lemma (Theorem 13.3.1, \cite{BurZal})  which says that there is a constant $k=k(s,m)$ such that for every $m$-dimensional plane, isotropic or not,
\begin{equation}\label{eilen}
\int_{V}\Ha^{s-m}(A\cap (\Vp+v))d\Ha^m(v)\leq k(s,m)\Ha^{s}(A)
\end{equation}
so in particular, if $\Ha^s(A)<\infty,$
\[\Ha^{s-m}(A\cap (\Vp+v))<\infty,\text{ for }\Ha^m-a.e\ v\in V. \]
In view of \eqref{minusball} one could expect that the similar statement:
\[\text{For all } x\in\R^{2n},\ \Ha^{s-m}[ A\cap(\Vp+x)]\leq\infty, \text{ for }\munm-a.e.\ V\in\Gh,\]
holds. I have not been able to prove this, but the following weaker result holds.

\begin{theorem}\label{UpperBound}
Let $s\geq m$ and $A\in\Rnn$ be a Borel set with $\Ha^s(A)<\infty$. Then for $\Ha^{2n}-$almost every $x\in\Rnn$
\[\Ha^{s-m}(A\cap(\Vp+x))<\infty, \text{ for $\munm-$almost every } V\in\Gh\]
\end{theorem}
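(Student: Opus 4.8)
The plan is to prove the statement by a Fubini argument on the product space $\Rnn\times\Gh$, reducing everything to Eilenberg's inequality \eqref{eilen}. Define the exceptional set
\[E=\{(x,V)\in\Rnn\times\Gh:\Ha^{s-m}(A\cap(\Vp+x))=\infty\}.\]
The theorem is exactly the assertion that the $x$-sections $E_x=\{V:(x,V)\in E\}$ satisfy $\munm(E_x)=0$ for $\Ha^{2n}$-a.e. $x$, so it suffices to show $(\Ha^{2n}\times\munm)(E)=0$ and then slice $E$ in $x$. As in Lemma \ref{allupperbound}, any measurability failure can be absorbed by working with outer integrals throughout; the measurability of $(V,v)\mapsto\Ha^{s-m}(A\cap(\Vp+v))$ is the same one that underlies \eqref{eilen}.

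The key elementary observation is that for a fixed $V$ the affine plane $\Vp+x$ depends only on the projection $P_V(x)$: since $x-P_V(x)\in\Vp$, one has $\Vp+x=\Vp+P_V(x)$, so the slice $A\cap(\Vp+x)$ is unchanged if $x$ is replaced by $v=P_V(x)\in V$. Consequently the $V$-section of the exceptional set factors as $E^V:=\{x:(x,V)\in E\}=P_V^{-1}(\widetilde E^V)$, where $\widetilde E^V:=\{v\in V:\Ha^{s-m}(A\cap(\Vp+v))=\infty\}$.

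Now I would fix $V\in\Gh$ and apply Eilenberg's inequality \eqref{eilen}, which gives $\int_V\Ha^{s-m}(A\cap(\Vp+v))\,d\Ha^m(v)\le k(s,m)\Ha^s(A)<\infty$ and hence $\Ha^m(\widetilde E^V)=0$. Decomposing $\Rnn=V\oplus\Vp$ orthogonally, the Lebesgue measure $\Ha^{2n}$ is the product of the $m$-dimensional Lebesgue measure on $V$ with the $(2n-m)$-dimensional Lebesgue measure on $\Vp$, and $E^V=P_V^{-1}(\widetilde E^V)$ corresponds precisely to $\widetilde E^V\times\Vp$ under this splitting; since $\widetilde E^V$ is $\Ha^m$-null, Fubini forces $\Ha^{2n}(E^V)=0$ for \emph{every} $V\in\Gh$. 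Integrating in $V$ yields $(\Ha^{2n}\times\munm)(E)=\int_{\Gh}\Ha^{2n}(E^V)\,d\munm(V)=0$, and a final application of Fubini in the opposite order gives $\munm(E_x)=0$ for $\Ha^{2n}$-a.e. $x$, which is the claim.

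The main obstacle is not any estimate but the joint measurability needed to run Fubini on $\Rnn\times\Gh$, i.e. that $(x,V)\mapsto\Ha^{s-m}(A\cap(\Vp+x))$ is $(\Ha^{2n}\times\munm)$-measurable. I would dispatch this exactly as in Lemma \ref{allupperbound}, passing to the finite-$\delta$ premeasures $\Ha^{s-m}_\delta$ and using outer integrals, so that the inequalities above remain valid and the conclusion is unaffected. It is worth noting why this argument yields only the ``almost every $x$'' statement and not the stronger one displayed before the theorem: the bound \eqref{minusball} degenerates like $r^{-m}$ as the excised ball $B(x,r)$ shrinks, so finiteness of the full slice cannot be extracted for an individual $x$, whereas averaging in $x$ through Fubini sidesteps this blow-up entirely.
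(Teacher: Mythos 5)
Your argument is correct and, while it rests on the same two pillars as the paper's proof---Eilenberg's inequality \eqref{eilen} and the observation that $\Vp+x=\Vp+P_V(x)$, combined with Fubini on $\Rnn\times\Gh$---it is organized differently and is leaner. The paper integrates the slice-measure function $x\mapsto\Ha^{s-m}(A\cap(\Vp+x))$ itself against $\Ha^{2n}$; since that function is constant along each fiber $\Vp+v$, it is not integrable over all of $\Rnn$, so the paper must truncate to a tube $\mathcal{N}(V,R)$, use that $P_{V\#}(\Ha^{2n}\lfloor_{\mathcal{N}(V,R)})=\tilde{c}(R,m,n)\Ha^m$ uniformly in $V$, and afterwards remove the cutoff by a compactness/contradiction argument ($0\in V$ forces every compact set into the tube $\mathcal{N}(V,R_0)$ for all $V$ simultaneously). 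You instead integrate the indicator of the exceptional set $E$: for fixed $V$, Eilenberg makes $\widetilde{E}^V$ an $\Ha^m$-null subset of $V$, so $E^V=P_V^{-1}(\widetilde{E}^V)\cong\widetilde{E}^V\times\Vp$ is $\Ha^{2n}$-null (exhaust $\Vp$ by balls: null times finite is null), and two applications of Fubini to $\mathbbm{1}_E$ finish the proof. Since indicators are bounded, the finiteness obstruction that forces the paper's truncation never arises, and you need neither the tube computation nor the closing contradiction; what the paper's version buys in exchange is a quantitative integral bound on the tube-weighted slice measures, which is more information than the null-set conclusion but is not needed for the theorem.

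One point needs repair. Your claim that any measurability failure ``can be absorbed by working with outer integrals throughout,'' as in Lemma \ref{allupperbound}, does not apply to your argument. The chain from ``$\Ha^{2n}(E^V)=0$ for every $V$'' to ``$(\Ha^{2n}\times\munm)(E)=0$'' to ``$\munm(E_x)=0$ for a.e.\ $x$'' is precisely Fubini for the set $E$, and that implication genuinely fails for non-measurable sets: Sierpi\'{n}ski-type examples have all sections in one direction null and all sections in the other direction co-null. Outer integration suffices in Lemma \ref{allupperbound} because that proof is a direct covering/Fatou estimate for a fixed $x$, with no interchange of the order of slicing. What you actually need is joint measurability of $(x,V)\mapsto\Ha^{s-m}(A\cap(\Vp+x))$, equivalently of $E$; this is available for compact $A$ by results of the type cited in \cite{MattilaMauldin}, and extends to Borel $A$ with $\Ha^s(A)<\infty$ by inner regularity. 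The fixed-$V$ half of your argument is immune to this issue, since an outer-integral form of \eqref{eilen} already gives $\Ha^m(\widetilde{E}^V)=0$ and $E^V$ is then contained in a Borel null set. Moreover, the paper's own Tonelli swap requires exactly the same joint measurability, so this is not a gap relative to the paper's standard of rigor---but the correct justification is measurability of slice measures, not outer integration.
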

\begin{proof}\text{ }

\textbf{Claim} Fix any $R>0$, then for $\Ha^{2n}-$almost every $x\in\Rnn$
\[\Ha^{s-m}(A\cap(\Vp+x))\mathbbm{1}_{\mathcal{N}(V,R)}(x)<\infty,\]
for $\munm-$almost every $V\in\Gh$. Here, for a set $E\subset\Rnn$ and $r>0$, $\mathcal{N}(E,r):=\cup_{x\in E}B(x,r)$ is the tubular neighborhood, of radius $r$, of the set $E$.

To see this claim, let $\lambda$ be the measure on $\Rnn$ given by $d\lambda=\mathbbm{1}_{\mathcal{N}(V,R)}d\Ha^{2n}.$ In other words $\lambda=\Ha^{2n}\lfloor_{\mathcal{N}(V,R)}.$ Then there is a constant $\tilde{c}=\tilde{c}(R, m, n)$, independent of $V$, such that $P_{V\#}\lambda=c\Ha^m$. Indeed, since $\mathcal{N}(V,R)=P_{\Vp}^{-1}(B_{\Vp}(0,R))$, for any measurable set $S\subset V$ we have that 
\begin{align*}
P_{V\#}\lambda(S) &=\lambda(P_{V}^{-1}(S))\\
&=\Ha^{2n}(P_{V}^{-1}(S)\cap P_{\Vp}^{-1}(B_{\Vp}(0,R)))\\
&=\Ha^{2n}(S\times B_{\Vp}(0,R))\\
&=c\Ha^{2n-m}(B_{\Vp}(0,R))\Ha^m(S)=\tilde{c}(R, m, n)\Ha^m(S),
\end{align*}
where $\tilde{c}(R,m,n)$ is the  product of $\Ha^{2n-m}-$measure of the ball of radius $R$ in $\R^{2n-m}$ with the constant $c$ such that $\Ha^{2n}=c\Ha^{2n-m}\times\Ha^{m}$.  Now, by Eilenberg's lemma, Tonelli's theorem, and the fact that for any $x\in\Rnn$, $\Vp+x=\Vp+P_V(x)$, we compute
\begin{align*}
\int_{\Rnn} &\int_{\Gh}\Ha^{s-m}[A\cap(\Vp+x)]\mathbbm{1}_{\mathcal{N}(V,R)}(x)d\munm(V)d\Ha^{2n}(x)\\
&=\int_{\Gh} \int_{\Rnn}\Ha^{s-m}[A\cap(\Vp+x)]\mathbbm{1}_{\mathcal{N}(V,R)}(x)d\Ha^{2n}(x)d\munm(V)\\
&=\int_{\Gh} \int_{\Rnn}\Ha^{s-m}[A\cap(\Vp+P_V(x))]d\lambda(x)d\munm(V)\\
&=c(R, m, n)\int_{\Gh} \int_{V}\Ha^{s-m}[A\cap(\Vp+v)]d\Ha^m(v)d\munm(V)\\
&\leq c(R, m, n)k(s, m)\Ha^m(A)<\infty,
\end{align*}
where the last line follows by integrating (\ref{eilen}) over $\Gh$ with respect to the probability measure $\munm$. This proves the claim.\\

Now the theorem is proven by contradiction. Suppose there is a set $\Omega\subset\Rnn$ such that $\Ha^{2n}(\Omega)>0$, and for all $x\in\Omega$, $\Ha^{s-m}(A\cap(\Vp+x))=\infty,$ for a positive $\munm$ measure set of planes $V\in \Gh$. By inner regularity of $\Ha^{2n}$ we may assume, without loss of generality, that $\Omega$ is compact. Then there is $R_0>0$ such that $\Omega\subset B(0,R_0)$. Since $0\in V$ it follows that $B(0,R_0)\subset\mathcal{N}(V,R_0)$ for every $V\in \Gh$. Therefore, $\Omega\subset\mathcal{N}(V,R_0)$ for every $V\in \Gh$. Hence, for every $x\in\Omega$ \[\Ha^{s-m}[A\cap(\Vp+x)]\mathbbm{1}_{\mathcal{N}(V,R_0)}(x)=\Ha^{s-m}[A\cap(\Vp+x)]=\infty,\] for a positive $\munm$ measure set of planes $v\in\Gh$. This contradicts the claim and finishes the proof. 

\end{proof}

The main slicing result is proven next.

\begin{proof}[Proof of Theorem \ref{isoslice}] 
 In view of Lemma \ref{allupperbound}, we only need to check that all $x$ outside of a set of dimension less than $m$, the upped bound $\dim[A\cap(\Vp+x)]\geq s-m$ holds for a $\munm-$positive measure set of planes $V\in\Gh$.

Without loss of generality, by Borel regularity of $\Ha^s$ we may assume that $A$ is compact. Let $B=\{x\in\Rnn: \munm(V\in\Gh: \dim[A\cap(\Vp+x)]\geq s-m)=0\}$, that is to say, for all $x\in B$, $\dim[A\cap(\Vp+x)]< s-m, \text{ for }\munm-\text{a.e. }V\in\Gh.$ Since $A$ is compact, the function $(x,V)\mapsto \dim[A\cap(\Vp+x)]$ is a Borel function (see \cite{MattilaMauldin}), which shows the set $B$ is a Borel set. Suppose, to obtain a contradiction, that $\dim B>m$. Pick $\nu\in\Ma(B)$ with finite $m-$energy. Let $\mu=\Ha^s\lfloor_{A}\in\Ma(A)$, possibly restricted to a further subset, so that $\mu(B(x,r))\lesssim r^s$ for all $x\in\Rnn, r>0$. In particular, by Lemma \ref{FrostAbs}, $I_{m}(\mu)<\infty.$ By assumption, for $\nu-$almost every $y\in\Rnn$ $\dim[A\cap(\Vp+y)]< s-m$ for $\munm-$almost every $V\in\Gh$. Applying Tonelli's theorem to the measures, we get that for $\munm-$-almost every $V\in\Gh$
\begin{equation}\label{one}
\dim[A\cap(\Vp+y)]< s-m \text{ for }\nu-a.e.\ y\in\Rnn.
\end{equation}
Similarly, by Theorem \ref{InSlice}, for $\munm-$almost every $\V\in\Gh$ we have 
\begin{equation}\label{two}
\dim[A\cap(\Vp+x)]\geq s-m \text{ for }\mu-a.e.\ x\in\Rnn.
\end{equation}
To find a contradiction one tries to find $y\in spt(\nu)$, $x\in spt(\mu)$ satisfying (\ref{one}) and (\ref{two}) respectively and such that $\Vp+x=\Vp+y$, or equivalently, $P_V(x)=P_V(y)$.\\
As seen Lemma \ref{FrostAbs}, for $\munm-$almost every $V\in\Gh$, both $P_{V\#}\mu$ and $P_{V\#}\nu$ are absolutely continuous with respect to $\Ha^m$, with densities $\mu_V,\ \nu_V \in L^2(V)$.  For such $V$ define
\begin{align*}
&A_V:=\{x\in\Rnn: \dim[A\cap(\Vp+x)]\geq s-m\}\\
&B_V:=\{y\in\Rnn: \dim[A\cap(\Vp+y)]< s-m\}\\
&C_V:=\{v\in V: \mu_V(v)\nu_V(v)>0\}.
\end{align*}
For $\munm-$almost every $V$, $\mu(\Rnn\setminus A_V)=0$, and $\nu(\Rnn\setminus B_V)=0$. By (\ref{L1comp}), for a $\munm-$positive measure set of planes, $\Ha^m(C_V)>0$.  So we can pick $V\in\Gh$ for which all three things are simultaneously satisfied. By (\ref{equal}) we have 
\[\int_V\mu_{\Vp+v}(\Rnn\setminus A_V)d\Ha^m(v)=\mu(\Rnn\setminus A_V)=0,\]
and
\[\int_V\nu_{\Vp+v}(\Rnn\setminus B_V)d\Ha^m(v)=\nu(\Rnn\setminus B_V)=0.\]
By (\ref{ProjSlice}) we also have $0<\mu_V(v)=\mu_{\Vp+v}(\Rnn)$, and $0<\nu_V(v)=\nu_{\Vp+v}(\Rnn)$, so that $\mu_{\Vp+v}(A_V)>0$ and $\nu_{\Vp+v}(B_V)>0.$ This shows there is $v\in C_V$ such that $\mu_{\Vp+v},$ and $\nu_{\Vp+v}$ are both positive. Hence, we can find $x\in A_V$, $y\in B_V$, such that $P_V(x)=P_V(y)=v$ as needed.

\end{proof}


\section{Applications to the Heisenberg group }

Now the aim is to apply the previously discussed results to the geometry of the Heisenberg groups. These groups are very widely studied and there are many references, expository and otherwise, to geometry and analysis in these groups (see for instance \cite{CDPT}). Here we will cover the basics of the Heisenberg group together with those properties that will be relevant to the contents of this paper.  The $n^{th}$ Heisenberg group, denoted $\Hn$, is a nilpotent Lie group whose background manifold is $\Cn\times\R$ and whose Lie algebra has a step 2 stratification $\mathfrak{h}^n=V_1\oplus V_2$ where $V_1$ has dimension $2n$, $V_2$ has dimension 1 and satisfies $[V_1,V_1]=V_2$ with all other brackets being  trivial. The most common way to represent $\Hn$ is as the set $\Cn\times\R$ or $\Rnn\times \R$. We use this last two presentations interchangeably by identifying $\Cn$ with $\Rnn$ in the usual way (i.e. $z=x+iy$). The group law on $\Hn$ is then given by
\[(z,t)*(w,s)=(z+w,t+s-\frac{1}{2}\omega(z,w)).\]
With this representation, it is easy to see that $\Hn$ is an $\R$-bundle over $\Cn$ with bundle map $\pi:\Hn\to\Cn$ given by $\pi(z,t)=z$. The left invariant vector fields are given by
\[X_i=\frac{\partial}{\partial x_i}-\frac{x_{i+n}}{2}\frac{\partial}{\partial t},X_{i+n}=\frac{\partial}{\partial x_{i+n}}+\frac{x_{i}}{2}\frac{\partial}{\partial t}, \text{ for } i=1,\ldots,n , \text{ and } T=\frac{\partial}{\partial t}.\]

The first $2n$ vector fields form a basis for a bracket-generating sub-bundle of $\mathfrak{h}^n$. This allows us to use these vectors to create a well-defined distance in $\Hn$. We say a $\mathcal{C}^1$ curve $\gamma:I\to\Hn$ is \textit{horizontal}, or \textit{admissible}, if $\gamma'(\tau)\in Span\{X_1,\ldots, X_{2n}\},\ \forall \tau\in I$. That is to say, if $\gamma$ is horizontal then $\gamma'(\tau)=\sum_{j=1}^{2n}a_j(\tau)X_j$, hence we can define the length of $\gamma$ as $|\gamma|^2=\int_I\left[\sum_{j=1}^{2n}a_j^2(\tau)\right]^{1/2}d\tau$ The associated path distance, known as Carnot-Carath\'{e}odory distance, is given by
\[d_{cc}(p,q)=\inf\{|\gamma|:\gamma\in\mathcal{C}^1([0,1],\Hn)\text{  is horizontal, with } \gamma(0)=p\text{ and }\gamma(1)=q\}.\]
This makes $\Hn$ a sub-Riemannian manifold whose Hausdorff dimension is different from its topological dimension. In fact, $\Hn$ is Ahlfors $(2n+2)$-regular. More generally, for $A\subset\Hn$ its Hausdorff dimension with respect to the Heisenberg metric is greater than its dimension with respect to the Euclidean metric in $\R^{2n+1}$. In this section, $\dim A$ refers to the dimension with respect to the Heisenberg metric. To avoid confusion, if any reference to the Euclidean Hausdorff dimension of a set is needed, it will be denoted by $\dim_E A$.  The Heisenberg group also admits a gauge norm, known as the Kor\'{a}nyi norm, which induces a metric that is bi-Lipschitz equivalent to $d_{cc}$.  Given that Hausdorff dimension is invariant under bi-Lipschitz maps, and due to the explicit formula for the Kor\'{a}nyi norm, the Kor\'{a}nyi metric is often used in interchagebly with the Carnot-Caratheodory distance when exploring questions related to dimension of sets. The norm is given by
\[||(z,t)||_{\He}^4=|z|^4+16t^2,\]
and the metric, by
\[d_{\He}(p,q)=||q^{-1}p||_{\He},\]
where $q^{-1}$ is the group inverse of $q$ (which happens to be $-q$). 

One of the most important properties of $\Hn$ is that it admits a homogeneous structure given by the homogeneous dilations
\[\delta_r(z,t)=(rz,r^2t), r>0.\]
The map $\delta_r$ is an automorphism of the group that dilates distances by a factor of $r$. These dilations, combined with the group structure, make $\Hn$ into a ``almost vector space" of sorts. In fact, the Heisenberg group is the simplest example of a much larger class of groups, known as Carnot groups, which share many of these properties.  With this homogeneous structure it makes sense to to talk about ``vector subspaces", subgroups of $\He$ that are closed under homogeneous dilations. These are known as \textit{homogeneous subgroups} and in $\Hn$ they come in 2 types, those that intersect the $t$ axis trivially and those that contain the whole $t-axis$. The former subgroups are called \textit{horizontal subgroups}, while the latter are referred to as \textit{vertical subgroups}. Suppose that $\V\subset\Hn$ is a horizontal subgroup. Then for $(z,0),(w,0)\in\V$, their product $(z,0)*(w,0)=(z+w,-\frac{1}{2}2\omega(z,w))$ must also be in $\V$, so that $\omega(z,w)=0$. That is to say, $\V\subset\Cn\times\{0\}$ must be contained in $V\times\{0\}$ where $V$ is an isotropic subspace of $\Cn=\Rnn$. Indeed, there is a one to one correspondence between isotropic subspaces of $\Rnn$ and horizontal subgroups of $\Hn$. The notation $\V$ is used to denote the horizontal subgroup corresponding to the plane $V$. Moreover, if we denote by $\V^{\perp}$ the Euclidean orthogonal complement of $\V$, one can easily check that $\V^{\perp}$ is a vertical subgroup of $\Hn$. For each $V\in \Gh$, $\Hn$ admits a semi-direct splitting $\Hn=\V^{\perp}\rtimes\V$. This semi-direct splitting gives raise to well-defined projection maps $P_{\V}:\Hn\to\V$ and $P_{\V^{\perp}}:\Hn\to\V^{\perp}$ defined by ``reading off" the respective component of a point $p\in\Hn$. Horizontal projections are simply Euclidean orthogonal projections onto the corresponding isotropic subspace, whereas vertical projections are given by the formula $P_{\V^{\perp}}(p)=p*P_{\V}(p)^{-1}$.

The relation between horizontal subgroups and isotropic subspaces of $\Hn$ allows to obtain strong results about Heisenberg dimension distortion by horizontal projections. A result along this lines was first observed by Z.Balogh, E. Durand-Cartagena, K. F\"{a}ssler, P. Mattila, and J.T. Tyson in \cite{BDCFMT}, where they proved the following,

\begin{theorem}
Let $A\subset\Hn$ be a Borel set of dimension $s$ with $0<\Ha^s(A)<\infty$. Then, 
\begin{enumerate}
\item If $2\leq s\leq m+2$, $\dim P_{\V}A \geq s-2$ for $\munm$-a.e. $V\in \Gh$.
\item If $s> m+2$, $\Ha^m( P_{\V}A) > 0$ for $\munm$-a.e. $V\in \Gh$.
\end{enumerate} 
\end{theorem}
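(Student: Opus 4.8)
The plan is to transport the problem down to the horizontal plane $\Cn=\Rnn$ and then invoke the Euclidean isotropic projection estimate of Theorem \ref{IsoProjExcep}. First I would make the projection explicit: solving $(z,t)=q*w$ with $q\in\V^{\perp}$ and $w\in\V$ gives $w=(P_V(z),0)$, so that $P_{\V}(z,t)=(P_V(z),0)$, where $P_V$ is the Euclidean orthogonal projection of $\Rnn$ onto the isotropic plane $V$. Hence, writing $\pi(z,t)=z$,
\[ P_{\V}A=P_V(\pi(A))\times\{0\}. \]
Since $V$ is isotropic we have $\omega|_V\equiv0$, so for $v,w\in V$ the group law gives $(w,0)^{-1}*(v,0)=(v-w,0)$ and $\|(v-w,0)\|_{\He}=|v-w|$; that is, the Kor\'anyi metric restricted to the horizontal subgroup $\V$ is exactly the Euclidean metric. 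Consequently the Heisenberg dimension and $m$-measure of $P_{\V}A$ coincide with the Euclidean dimension and $m$-measure of $P_V(\pi(A))\subset\Rnn$ (written $\dim_E$, $\Ha^m_E$), and both conclusions of the theorem become Euclidean statements about the sets $P_V(\pi(A))$.

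The crucial ingredient is the dimension comparison $\dim_E\pi(A)\geq\dim A-2=s-2$. By inner regularity of $\Ha^s$ I may pass to a compact subset of $A$ of positive $\Ha^s$-measure (still of Heisenberg dimension $s$), so $A$ and $\pi(A)$ are compact, and by countable stability of dimension I may assume $A\subset\Rnn\times[-M,M]$. Put $\alpha=\dim_E\pi(A)$ and fix $\epsilon>0$. Since $\alpha+\epsilon>\dim_E\pi(A)$ we have $\Ha^{\alpha+\epsilon}_E(\pi(A))=0$, so for every $\eta>0$ there is a cover of $\pi(A)$ by Euclidean balls $B(z_i,r_i)$ with $r_i\leq\delta$ and $\sum_i r_i^{\alpha+\epsilon}<\eta$. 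The slab $B(z_i,r_i)\times[-M,M]$ has Haar measure $\sim M r_i^{2n}$, while a Kor\'anyi ball of radius $r_i$ has Haar measure $\sim r_i^{2n+2}$; since Haar measure is invariant under the left translations implementing the Heisenberg shear, a volume/packing count shows the slab is covered by $\lesssim M r_i^{-2}$ Kor\'anyi balls of radius $\sim r_i$. Summing the $s'$-contents of these balls, with $s'=\alpha+2+\epsilon$, gives
\[ \sum_i r_i^{-2}\,r_i^{\,s'}=\sum_i r_i^{\alpha+\epsilon}<\eta; \]
since the balls have radius $\lesssim\delta$, letting $\eta\to0$ and $\delta\to0$ yields $\Ha^{s'}(A)=0$, hence $s=\dim A\leq\alpha+2+\epsilon$ for every $\epsilon>0$. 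This is the standard vertical-fibre comparison (cf. \cite{BDCFMT}).

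It then remains to apply Theorem \ref{IsoProjExcep} to $\pi(A)$, using that a subset of $\Gh$ of Hausdorff dimension strictly below $\eta^n_m=\dim\Gh$ is $\munm$-null. For part (2), the hypothesis $s>m+2$ forces $\alpha\geq s-2>m$, so Theorem \ref{IsoProjExcep}(2) bounds $\dim\{V:\Ha^m_E(P_V\pi(A))=0\}$ by $\eta^n_m+m-\alpha<\eta^n_m$; hence $\Ha^m_E(P_V\pi(A))>0$, i.e. $\Ha^m(P_{\V}A)>0$, for $\munm$-a.e. $V$. For part (1), $2\leq s\leq m+2$ gives both $\alpha\geq s-2$ and $m\geq s-2$. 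If $\alpha>m$ the same estimate yields $\dim_E P_V(\pi(A))=m\geq s-2$ for a.e. $V$; if $\alpha\leq m$, then Theorem \ref{IsoProjExcep}(1) applied along a sequence $t_k\uparrow\alpha$ gives $\dim_E P_V(\pi(A))=\alpha\geq s-2$ for a.e. $V$. In either case $\dim P_{\V}A=\dim_E P_V(\pi(A))\geq s-2$ for $\munm$-a.e. $V\in\Gh$, and these lower bounds transfer from the compact subset back to the original set by monotonicity.

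I expect the main obstacle to be the dimension comparison $\dim_E\pi(A)\geq s-2$: one must count the Kor\'anyi balls needed to cover the vertical fibres of $\pi$ while controlling the shear of the group law (which I propose to absorb into the invariance of Haar measure) and reduce to bounded $t$-extent. Once this bound is in hand, the two parts follow from a clean case analysis built on the already-established Euclidean isotropic projection estimates and the exact metric identification on the isotropic horizontal subgroup $\V$.
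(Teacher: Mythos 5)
Your proposal is correct and takes essentially the same approach as the paper, which obtains this theorem in one line as a consequence of the Euclidean isotropic projection theorem combined with the fact that the bundle map $\pi:\Hn\to\Cn$ lowers Hausdorff dimension by at most $2$ --- precisely your reduction $P_{\V}=P_V\circ\pi$ (with the exact metric identification on $\V$) together with the fibre estimate $\dim_E\pi(A)\geq s-2$. The only additions are that you prove, rather than cite, the two ingredients the paper defers to the literature (the slab-covering bound, which is Lemma 6.5 of \cite{BFMT}, and the passage from Hovila's exceptional-set estimates in Theorem \ref{IsoProjExcep} to the $\munm$-a.e.\ statements), and those details are sound.
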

This theorem is a consequence of Theorem \ref{ISOPROJ} combined with the fact that the bundle map $\pi:\Hn\to\Cn$ does not increase Hausdorff dimension of sets, and it decreases it by at most 2 (this is sharp). We can readily see how this, together with part (3) of Theorem \ref{ISOPROJ}, implies the following,
\begin{corollary}
\item If $s> 2m+2$, $Int( P_{\V}A) \neq\varnothing$ for $\munm$-a.e. $V\in \Gh$.
\end{corollary}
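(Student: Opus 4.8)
The plan is to reduce the statement to part (3) of Theorem \ref{ISOPROJ} by pushing the set down to $\Cn$ via the bundle map $\pi$ and tracking how much dimension is lost. First I would record the key identification: since the horizontal subgroup is $\V=V\times\{0\}$ and the horizontal projection is the Euclidean orthogonal projection onto the corresponding isotropic plane, one has $P_{\V}=P_V\circ\pi$ after identifying $\V$ with $V$. Consequently $P_{\V}A=P_V(\pi(A))$ as subsets of $V$, which is precisely the object whose interior I want to understand.

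Next I would control the Euclidean dimension of $\pi(A)$. Using the stated fact that $\pi:\Hn\to\Cn$ does not increase Hausdorff dimension and lowers it by at most $2$, together with the hypothesis $\dim A=s>2m+2$ in the Heisenberg metric, I obtain $\dim_E\pi(A)\geq s-2>2m$. Since Theorem \ref{ISOPROJ} is phrased for sets with $0<\Ha^s<\infty$, I would first pass to a subset: reducing at the outset to $A$ compact (so that $\pi(A)$ is compact), I pick $s'$ with $2m<s'<\dim_E\pi(A)$ and select a compact $E\subset\pi(A)$ with $0<\Ha^{s'}(E)<\infty$. Applying part (3) of Theorem \ref{ISOPROJ} to $E\subset\Rnn$ yields $Int(P_V E)\neq\varnothing$ for $\munm$-a.e.\ $V\in\Gh$, and since $P_V E\subset P_V(\pi(A))=P_{\V}A$, the same conclusion holds for $P_{\V}A$.

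I do not anticipate a genuine obstacle here: the content is carried entirely by Theorem \ref{ISOPROJ}(3) and the sharp dimension drop of $\pi$, so the work is essentially bookkeeping. The one point requiring care is that ``interior'' means the same thing for $P_{\V}A\subset\V$ whether measured in the Heisenberg or the Euclidean metric; this holds because on a horizontal subgroup the Carnot--Carath\'eodory distance agrees (up to bi-Lipschitz equivalence) with the Euclidean distance, so the two topologies on $\V$ coincide and the notion of nonempty interior is unambiguous. The reduction to compact $A$ also sidesteps any concern about $\pi(A)$ being merely Suslin rather than Borel, and the corollary then stands as a direct translation of the Euclidean interior statement into the Heisenberg setting.
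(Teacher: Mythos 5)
Your proof is correct and follows exactly the route the paper intends: apply Theorem \ref{ISOPROJ}(3) to $\pi(A)$, using $P_{\V}=P_V\circ\pi$ and the fact that $\pi$ drops Hausdorff dimension by at most $2$, so that $\dim_E\pi(A)\geq s-2>2m$. The additional bookkeeping you supply (passing to a compact subset of finite $\Ha^{s'}$-measure to meet the hypotheses of Theorem \ref{ISOPROJ}, and noting that the Euclidean and Heisenberg topologies agree on $\V$) is exactly what the paper leaves implicit in its one-line justification.
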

Vertical projections, on the other hand, are not Euclidean projections. In fact, the best regularity of the map $P_{\V}$ is (locally) $\frac{1}{2}$-Holder. Therefore, analyzing dimension distortion by these maps is a much more complicated task that does not come as a corollary of any known projection theorems in Euclidean space. In \cite{BFMT}, the authors obtained dimension distortion estimates for these vertical projections and conjectured what seems like feasible  sharp bounds for said distortion. Later in \cite{FasslerHovila} K. F\"{a}ssler and R. Hovila improved the estimates for $\He^1$, and recently T. Harris improved the $
\He^1$ bounds even further in \cite{THarris}.

 Here, with the results obtained for $\Gh$ one can easily prove Theorem \ref{BFMT} which in turned will help prove Theorem \ref{sliceinh}.

\begin{reptheorem}{BFMT}
Let $A,B\subset\Hn$ be Borel sets, and let $\V$ denote the horizontal subgroup corresponding to the isotropic plane $V\in\Gh$.
\begin{enumerate}
\item If $\dim A,\ \dim B>m+2$ then,
\[\munm(V\in\Gh:\Ha^m(P_{\V}A\cap P_{\V}B)>0)>0.\]
\item If $\dim A,\ \dim B>2m+2$ then,
\[\munm(V\in \Gh:Int(P_{\V}A\cap P_{\V}B)\neq\varnothing)>0.\]
\item If  $\dim A\geq m+2,\ \dim B\leq2m+2$ but $\dim A + \dim B>2m+4$, then
\[ \munm(V\in \Gh: \dim  (P_VA\cap P_VB)>\dim  B -2 -\epsilon)>0.\]
\end{enumerate}
\end{reptheorem}
\begin{proof}
One can easily check that for any horizontal subgroup $\V$, $d_{\Hn}\lfloor_{\V}=d_{E}\lfloor_{\V}$, therefore for any set $D\subset \Hn$ $\dim P_{\V}D=\dim_{E}P_{\V}D$. Moreover, $P_{\V}=P_V\circ\pi$ and it was shown in \cite{BFMT} that $\dim_E\pi(D)\geq\dim D-2$. Therefore, the theorem is proven by simply applying Theorem \ref{isoslice} to the sets $\pi(A), \pi(B) \subset\Rnn\times\{0\}\subset\Hn$.
\end{proof}
Hidden within Theorem \ref{BFMT} is the following Lemma that will be useful later,

\begin{lemma}\label{MutualH}
If $s,\sigma\in\R$ are such that $m+2<s,\sigma \leq 2n+2$ and $\mu,\nu\in\Ma(\Hn)$ satisfy \[\mu(B_{\Hn}(p,r))\lesssim r^s,\text{ and }\nu(B_{\Hn}(p,r))\lesssim r^{\sigma},\]
then for $\munm-$almost every $\V\in\Gh$, $P_{\V\#}\mu$ and $P_{V\#}\nu$ have densities $\mu_V$ and $\nu_V$ respectively and for a $\munm-$positive measure set of planes $V\in\Gh$,
\[\Ha^m(\{v\in\V:\mu_V(v)\nu_V(v)>0\})>0\]
\end{lemma}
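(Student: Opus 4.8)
The plan is to transfer the problem to $\Rnn$ through the bundle map $\pi\colon\Hn\to\Cn\cong\Rnn$ and then reuse verbatim the Euclidean computation from Theorem \ref{caseA}. Since $P_{\V}=P_V\circ\pi$, the pushforwards $\widetilde{\mu}:=\pi_\#\mu$ and $\widetilde{\nu}:=\pi_\#\nu$ on $\Rnn$ satisfy $P_{\V\#}\mu=P_{V\#}\widetilde{\mu}$ and $P_{\V\#}\nu=P_{V\#}\widetilde{\nu}$, so that $\mu_V$ and $\nu_V$ are precisely the densities (once they are shown to exist) of the Euclidean orthogonal projections of $\widetilde{\mu}$ and $\widetilde{\nu}$ onto $V$. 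Thus the whole lemma reduces to a statement about the two Euclidean measures $\widetilde{\mu},\widetilde{\nu}$ of the type already handled in Theorem \ref{caseA}, provided these pushforwards carry enough regularity.

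The first and main step is to show that $\widetilde{\mu}$ and $\widetilde{\nu}$ obey Euclidean Frostman bounds with exponents $s-2>m$ and $\sigma-2>m$, which is the measure-level incarnation of the dimension bound $\dim_E\pi(D)\geq\dim D-2$ used in Theorem \ref{BFMT}. Fix a Euclidean ball $B(z_0,r)\subset\Rnn$; then $\widetilde{\mu}(B(z_0,r))=\mu(\{(z,t):|z-z_0|<r\})$, and because $\mu$ is compactly supported only a bounded slab $\{|t|\leq T\}$, with $T$ depending on the support of $\mu$, contributes. The cylinder $\{|z-z_0|<r,\ |t|\leq T\}$ can be covered by $O(r^{-2})$ Kor\'anyi balls of radius comparable to $r$: such balls have horizontal extent $\lesssim r$ and vertical extent $\lesssim r^{2}$, so stacking them in the $t$-direction with spacing $\sim r^{2}$ suffices. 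The twist term $\tfrac12\omega(z_0,z)$ appearing in $d_{\Hn}((z_0,t_j),(z,t))$ only shifts the vertical centers by a bounded amount over the compact support, so it is absorbed into the covering constant. Hence the hypothesis $\mu(B_{\Hn}(p,r))\lesssim r^{s}$ gives
\[
\widetilde{\mu}(B(z_0,r))\lesssim r^{-2}\cdot r^{s}=r^{s-2},
\]
and likewise $\widetilde{\nu}(B(z_0,r))\lesssim r^{\sigma-2}$.

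With these bounds, since $s-2,\sigma-2\in(m,2n]$, Lemma \ref{FrostAbs} yields $P_{V\#}\widetilde{\mu}\ll\Ha^m$ and $P_{V\#}\widetilde{\nu}\ll\Ha^m$ for $\munm$-a.e.\ $V\in\Gh$, with densities $\mu_V,\nu_V\in L^2(V)$; this gives the first assertion of the lemma. The same exponents force $I_m(\widetilde{\mu})<\infty$ and $I_m(\widetilde{\nu})<\infty$ exactly as in the proof of Lemma \ref{FrostAbs}, so by H\"older's inequality \eqref{holderen} the mutual energy $I_m(\widetilde{\mu},\widetilde{\nu})$ is finite, and it is strictly positive because $\widetilde{\mu},\widetilde{\nu}$ are positive measures. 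Running the identity \eqref{L1comp} with $\widetilde{\mu},\widetilde{\nu}$ in place of $\mu,\nu$ then gives
\[
c\int_{\Gh}\int_V\mu_V(a)\nu_V(a)\,d\Ha^m(a)\,d\munm(V)=I_m(\widetilde{\mu},\widetilde{\nu})>0,
\]
so $\int_V\mu_V\nu_V\,d\Ha^m>0$ on a set of $V$ of positive $\munm$-measure. For each such $V$ there is $a\in V$ with $\mu_V(a)\nu_V(a)>0$, whence $\Ha^m(\{v\in\V:\mu_V(v)\nu_V(v)>0\})>0$, as required.

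The \emph{main obstacle} is the Frostman transfer estimate of the second paragraph: one must make the covering of the vertical cylinder by Kor\'anyi balls precise and verify that exactly two units are lost in the exponent, with the twist term controlled via the compact support of $\mu$ and $\nu$. Once this quantitative version of $\dim_E\pi(D)\geq\dim D-2$ is in hand, the remainder is a direct application of the machinery already developed for $\Gh$.
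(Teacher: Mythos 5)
Your proposal is correct and follows essentially the same route as the paper: transfer the problem through $\pi$ via $P_{\V}=P_V\circ\pi$, show that the pushforward to $\Rnn$ loses exactly two units in the exponent by covering a bounded vertical cylinder of radius $r$ with $O(r^{-2})$ Heisenberg balls of radius $\sim r$ (the paper cites \cite[Lemma 6.5]{BFMT} for this covering, which you reprove directly, handling the twist term correctly), and then conclude with \eqref{holderen} and the positivity computation \eqref{L1comp}. The only organizational difference is that you package the cylinder estimate as a Frostman bound on $\pi_{\#}\mu$ and feed it into Lemma \ref{FrostAbs} (which simultaneously yields the densities and the finite $m$-energy), whereas the paper bounds the Euclidean $m$-energy of $\pi_{\#}\mu$ by the same distribution-function computation directly and cites \cite[Proposition 6.1]{BFMT} for the existence of the densities.
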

\begin{proof}
The fact that for almost every $V\in\Gh$  $P_{\V\#}\mu$ and $P_{\V\#}\mu$ have densities is precisely the content of  \cite[Proposition 6.1]{BFMT}. As before, for each $\V$, $P_{\V}=P_V\circ\pi$, so that $P_{\V\#}\mu=P_{V\#}\pi_{\#}\mu$. Hence, the aim is to show that both $I^E_m(\pi_{\#}\mu)$ and $I^E_m(\pi_{\#}\nu)$ are finite, where $I^E_m$ denotes the Euclidean m energy. The lemma will then follow from (\ref{holderen}) and the computation in (\ref{L1comp}).\\
It is enough to show that if $\beta>m+2$ and $\eta\in\Ma(\Hn)$ satisfies $\eta(B_{\Hn}(p,r))\lesssim r^{\beta}$ for all $p\in\Hn$ and $r>0$, then $I^E_m(\pi_{\#}\eta)<\infty$. First note that by definition,
\[I^E_m(\pi_{\#}\eta)=\int\int|\pi(p)-\pi(q)|^{-m}d\eta(q)d\eta(p).\]
We will bound the inner integral independently of $p$ and use the fact that $\eta(\Hn)<\infty$ to bound the double integral. Since $\eta$ is compactly supported, there is $R_0>0$ such that $spt(\eta)\subset B_E(0,R_0)$. For each fixed $z\in\Rnn$, $\{q\in\Hn:|\pi(q)-z|\leq r\}$ is a cylinder of (Euclidean) radius $r$ over the ball $B^{2n}_E(z,r)$. Therefore
\[\{q\in\Hn:|\pi(q)-z|\leq r\}\subset B_E^{2n}(z,r)\times [-R_0,R_0].\]
By Lemma 6.5 in \cite{BFMT} one can choose $N\lesssim r^{-2}$ Heisenberg balls, $\{B_{\Hn}^j\}_{j=1}^N$ of radius $r$, such that $B_E^{2n}(z,r)\times [-R_0,R_0]\subset\cup_j B^j_{\Hn}$. It follows from the properties of $\eta$ that 
\[\eta([\{q\in\Hn:|\pi(q)-z|\leq r\})\lesssim r^{\beta-2}.\]
By theorem 1.15 in \cite{mattila2} we can compute
\begin{align*}
\int|\pi(q)-z|^{-m}d\eta(q)&=\int_0^{\infty}\eta(\{q\in\Hn:|\pi(q)-z|\leq r^{-1/m}\})dr\\
&=\int_0^1\eta(\{q\in\Hn:|\pi(q)-z|\leq r^{-1/m}\})dr+\int_1^{\infty}\eta(\{q\in\Hn:|\pi(q)-z|\leq r^{-1/m}\})dr\\
&\lesssim \eta(\Hn)+\int_1^{\infty}r^{-\frac{\beta-2}{m}}dr<\infty.
\end{align*}
Where the last line is finite because $\beta-2>m$. The lemma follows.
\end{proof}

The projection theorem also help us obtain our claimed results for exceptional sets for the slicing theorem in $\Hn$. Vertical subgroups are normal subgroups of $\Hn$, its right cosets $\V^{\perp}*p$, for $p\in\Hn$, form a partition of $\Hn$.  Just as in the Euclidean case, given a measure $\mu\in\Ma(\Hn)$ one can define the sliced measure $\mu_{\V^{\perp}*p}$ for each $\V$ and $p\in \V$. A detailed construction of these sliced measures can be found in \cite{BFMT}, but it follows the same scheme as the construction of sliced measures by planes in Euclidean space. Here we will simply state the properties that will be relevant to us. Firstly, as expected from ``sliced'' measures, $spt(\mu_{\V^{\perp}*p})\subset spt(\mu)\cap\V^{\perp}*p$. One can, however, say a lot more than that. In fact, for any Borel set $\Omega\subset\V$ one has
\begin{equation}\label{SlicedIntegral}
\int_{\Omega}\mu_{\V^{\perp}*p}(\Hn)d\Ha^m(p)\leq\mu({P_{\V}^{-1}\Omega}),
\end{equation}
with equality whenever $P_{\V\#}\mu\ll \Ha^m$. In particular, in this case if $\Omega=\V$ we get
\begin{equation}\label{totalmassh}
\int_{\V}\mu_{\V^{\perp}*p}(\Hn)d\Ha^m(p)=\mu(\Hn).
\end{equation}
 Whenever $P_{\V\#}\mu\ll \Ha^m$, in parallel with equation (\ref{SlicedIntegral}) one has,
\[\mu({P_{\V}^{-1}\Omega})=P_{\V\#}\mu (\Omega)=\int_{\Omega}P_{\V\#}\mu (p)d\Ha^m(p).\]
Hence we also have 
\begin{equation}\label{pushslice}
P_{\V\#}\mu(p)=\mu_{\V^{\perp}*p}(\Hn).
\end{equation}

In \cite{BFMT}, the authors proved the Heisenberg group analogue of Theorem \ref{InSlice}. This will be used in the proof of Theorem \ref{sliceinh} so it is stated here without proof.

\begin{theorem}\label{InSliceH}
Let $A\subset\Hn$ be a Borel set with $0<\Ha^s(A)<\infty$ for some $m+2<s\leq2n+2$. Then,
\[\dim[A\cap(\V^{\perp}*p)]=s-m, \text{ for }\Ha^s\times\munm-a.e.\ (x,V)\in A\times\Gh.\]
\end{theorem}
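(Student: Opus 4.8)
The plan is to transfer Theorem \ref{InSlice} from the Euclidean isotropic setting to the Heisenberg group via the bundle map $\pi:\Hn\to\Cn\cong\Rnn$, exploiting the correspondence between vertical-coset slices $A\cap(\V^{\perp}*p)$ in $\Hn$ and Euclidean slices $\pi(A)\cap(\Vp+\pi(p))$ in $\Rnn$. First I would record the basic geometric fact that the right coset $\V^{\perp}*p$ projects under $\pi$ onto the Euclidean affine plane $\Vp+\pi(p)$; more precisely, since $\V^{\perp}$ is a vertical subgroup and $\V$ a horizontal one with $\pi(\V)=V$ and $\pi(\V^{\perp})=\Vp$, one checks that $\pi(\V^{\perp}*p)=\Vp+\pi(p)$ and, crucially, that $\pi$ restricted to a single coset $\V^{\perp}*p$ is a bijection onto $\Vp+\pi(p)$. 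This reduces the behaviour of the Heisenberg slice to that of its Euclidean shadow.

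The heart of the argument is the two-sided dimension estimate relating $\dim[A\cap(\V^{\perp}*p)]$ (Heisenberg dimension) to $\dim_E[\pi(A)\cap(\Vp+\pi(p))]$ (Euclidean dimension). Because $\pi$ does not increase Hausdorff dimension and decreases it by at most $2$ (the fact quoted in the excerpt, from \cite{BFMT}), and because this dimension-drop phenomenon is governed by the vertical direction which is collapsed uniformly along each coset, I expect the restriction of $\pi$ to the coset $\V^{\perp}*p$ to behave like a genuine bi-Lipschitz or at worst dimension-preserving-up-to-$2$ map between the slice and its image. Concretely, the goal is to establish
\begin{equation}\label{sliceproj}
\dim[A\cap(\V^{\perp}*p)]=\dim_E[\pi(A)\cap(\Vp+\pi(p))]+2
\end{equation}
for the relevant $(p,V)$, or at least the inequalities needed to pin down the value $s-m$. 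Applying Theorem \ref{InSlice} to the Euclidean set $\tilde A:=\pi(A)\subset\Rnn$ with parameter $\tilde s:=s-2$ (so that $m<\tilde s\le 2n$, using $m+2<s\le 2n+2$) yields $\dim_E[\tilde A\cap(\Vp+x)]=\tilde s-m=s-m-2$ for $\Ha^{\tilde s}\times\munm$-a.e.\ $(x,V)\in\tilde A\times\Gh$; combining this with \eqref{sliceproj} gives $\dim[A\cap(\V^{\perp}*p)]=s-m$ on a full-measure set.

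The main obstacle, and the step requiring the most care, is the measure-theoretic bookkeeping needed to pass from ``$\Ha^{\tilde s}\times\munm$-almost every $(x,V)$ in $\pi(A)\times\Gh$'' back to ``$\Ha^{s}\times\munm$-almost every $(p,V)$ in $A\times\Gh$.'' One must verify that $\pi_{\#}(\Ha^s\lfloor A)$ is absolutely continuous with respect to $\Ha^{\tilde s}\lfloor\pi(A)$ (so that null sets pull back correctly), which is where the hypothesis $0<\Ha^s(A)<\infty$ and the codimension-$2$ nature of the fibres of $\pi$ enter; a Fubini/Eilenberg-type coarea decomposition of $\Ha^s\lfloor A$ along the fibres of $\pi$ should supply this, together with the disintegration that identifies the Heisenberg slice with the Euclidean slice fibrewise. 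I would handle the exceptional directions (where $\pi$ drops dimension by strictly less than $2$, or where the slice projection fails to be injective) by noting they form a $\munm$-null or $\Ha^s$-null set and are absorbed into the almost-everywhere statement. Once the absolute continuity and the slice identity \eqref{sliceproj} are in hand, Tonelli's theorem lets one interchange the order of $\Ha^s$ and $\munm$ exactly as in the proof of Theorem \ref{InSlice}, completing the argument.
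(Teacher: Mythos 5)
Your plan fails at its central geometric step. Since $\V^{\perp}*p=(\Vp+\pi(p))\times\R=\pi^{-1}(\Vp+\pi(p))$, the map $\pi$ restricted to a single coset collapses an entire vertical line over each point of $\Vp+\pi(p)$; it is as far from a bijection as possible, contrary to the "crucial" claim on which everything else rests. Consequently your proposed identity $\dim[A\cap(\V^{\perp}*p)]=\dim_E[\pi(A)\cap(\Vp+\pi(p))]+2$ has no justification: the dimension drop under $\pi$ is only pinched between $0$ and $2$, and which value occurs depends on how the slice distributes along the vertical fibers (a slice contained in a single vertical line has shadow a point but Heisenberg dimension up to $2$, while a slice on which $\pi$ is injective in a bi-Lipschitz fashion loses nothing). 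The hypothesis transfer is equally problematic: from $0<\Ha^s(A)<\infty$ one only gets $s-2\leq\dim_E\pi(A)\leq\min\{s,2n\}$, so there is no reason that $0<\Ha^{s-2}(\pi(A))<\infty$, and Theorem \ref{InSlice} simply cannot be applied to $\tilde A=\pi(A)$ with $\tilde s=s-2$. Finally, the absolute continuity $\pi_{\#}(\Ha^s\lfloor_A)\ll\Ha^{s-2}\lfloor_{\pi(A)}$ that you invoke to pull null sets back from $\pi(A)\times\Gh$ to $A\times\Gh$ is false in general; Eilenberg-type inequalities give one-sided integral bounds on slice measures, not a coarea disintegration of $\Ha^s\lfloor_A$ over $\pi$ with an $\Ha^{s-2}$ base, and no such disintegration exists at this level of generality.

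It is also worth noting that the paper gives no proof of this statement at all: Theorem \ref{InSliceH} is quoted from \cite{BFMT}, and the paper explicitly states that the logical direction of adaptation is the \emph{reverse} of yours --- the Euclidean Theorem \ref{InSlice} is described as a simple modification of the intrinsic Heisenberg argument of \cite{BFMT}. That intrinsic argument works directly in $\Hn$: one takes a Frostman measure for the Heisenberg metric, builds sliced measures on the vertical cosets satisfying the identities \eqref{totalmassh} and \eqref{pushslice}, proves an energy estimate analogous to Lemma \ref{10.7analogue} (using a projection-measure bound of the type \eqref{mubydelta}) to exclude, on a positive measure set of planes, slices of dimension strictly below $s-m$, and combines this with the Eilenberg-type upper bound of Lemma \ref{allupperboundH}. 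If you want a correct proof, adapt that scheme; the vertical contribution to the slice dimension is intrinsically Heisenberg and cannot be recovered from any Euclidean statement about $\pi(A)$ alone.
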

 
 The argument used to prove Theorem \ref{InSlice} is just a simple modification of the argument used in \cite{BFMT} to prove Theorem \ref{InSliceH}. In particular, as a consequence of the proof of Theorem \ref{InSliceH} one obtains the analogue of Lemma \ref{allupperbound}.
 \begin{lemma}\label{allupperboundH}
 Let $A\subset\Hn$ be a Borel set with $0<\Ha^s(A)<\infty$ for some $m+2<s\leq2n+2$. Then, for all $p\in\Hn$
 \[\dim[{A\cap(\V^{\perp}*p)}]\leq s-m\text{ for }\munm-a.e.\ V\in\Gh.\]
 \end{lemma}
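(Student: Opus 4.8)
The plan is to follow the proof of Lemma~\ref{allupperbound} as closely as the Heisenberg geometry allows, reducing everything to the Euclidean bound \eqref{mubydelta}. By the countable stability of Hausdorff dimension and the decomposition $A\cap(\V^{\perp}*p)=\bigcup_{j}(A\setminus B_{\Hn}(p,1/j))\cap(\V^{\perp}*p)$, it suffices to prove that for every fixed $p\in\Hn$ and $r>0$,
\[
\int^{*}\Ha^{s-m}\bigl[(A\setminus B_{\Hn}(p,r))\cap(\V^{\perp}*p)\bigr]\,d\munm(V)<\infty ,
\]
since this forces $\dim[(A\setminus B_{\Hn}(p,1/j))\cap(\V^{\perp}*p)]\le s-m$ for $\munm$-a.e.\ $V$ and every $j$.

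The first key step is the Heisenberg analogue of \eqref{mubydelta}. Using that $\pi\colon(\Hn,d_{\He})\to(\Cn,|\cdot|)$ is $1$-Lipschitz, that $\pi(B_{\Hn}(q,\rho))=B_{E}(\pi(q),\rho)$, and that $\V^{\perp}*p=\pi^{-1}(\Vp+\pi(p))$ (the slice is the full cylinder over the Euclidean coset $\Vp+\pi(p)$, because $q\in\V^{\perp}*p$ iff $\pi(q)-\pi(p)\in\Vp$), I would show that for a Heisenberg ball $B=B_{\Hn}(q,\rho)$ the condition $B\cap(\V^{\perp}*p)\neq\varnothing$ is equivalent to $|P_{V}(\pi(q)-\pi(p))|\le\rho$. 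The Euclidean estimate \eqref{mubydelta} then yields
\[
\munm\bigl(\{V: B_{\Hn}(q,\rho)\cap(\V^{\perp}*p)\neq\varnothing\}\bigr)\lesssim \rho^{m}\,|\pi(q)-\pi(p)|^{-m}.
\]

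Next I would cover $A\setminus B_{\Hn}(p,r)$ by Heisenberg balls $B_{kj}=B_{\Hn}(q_{kj},\rho_{kj})\subset\Hn\setminus B_{\Hn}(p,r/2)$ with $\rho_{kj}<1/k$ and $\sum_{j}\rho_{kj}^{s}<\Ha^{s}(A)+1$, exactly as in Lemma~\ref{allupperbound}; combining the two displayed estimates gives $\int\text{diam}(B_{kj}\cap(\V^{\perp}*p))^{s-m}\,d\munm\lesssim \rho_{kj}^{s}\,|\pi(q_{kj})-\pi(p)|^{-m}$, and Fatou's lemma reduces the claim to controlling $\liminf_{k}\sum_{j}\rho_{kj}^{s}|\pi(q_{kj})-\pi(p)|^{-m}$. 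The main obstacle, and the genuinely non-Euclidean feature, is that removing the Heisenberg ball $B_{\Hn}(p,r)$ does \emph{not} bound the horizontal distance $|\pi(q_{kj})-\pi(p)|$ from below: balls far from $p$ in $d_{\He}$ may still sit essentially above $p$ on the vertical line $L_{p}=\pi^{-1}(\pi(p))$, where the weight $|\pi(q_{kj})-\pi(p)|^{-m}$ blows up.

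To overcome this I would split the cover by horizontal distance at a fixed scale $\epsilon$. For the ``far'' balls, $|\pi(q_{kj})-\pi(p)|\ge\epsilon$, so their total contribution is $\lesssim\epsilon^{-m}\sum_{j}\rho_{kj}^{s}\lesssim\epsilon^{-m}\Ha^{s}(A)<\infty$. For the ``near'' balls, lying in the thin cylinder $\pi^{-1}(B_{E}(\pi(p),\epsilon))$, the weight is uncontrolled, so instead I would use the trivial bound $\munm(\cdots)\le1$ and exploit that these balls concentrate around $L_{p}$. Since $L_{p}$ has Heisenberg dimension exactly $2$ and $s-m>2$, the $(s-m)$-content of the near part is negligible; a quantitative version uses Lemma~6.5 of \cite{BFMT}, by which a cylinder of radius $\rho$ over a Euclidean ball is covered by $\lesssim\rho^{-2}$ Heisenberg $\rho$-balls, turning the exponent $s-m$ into the favorable $s-m-2>0$. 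Balancing this near/far split is the technical heart of the argument, and it is precisely where the hypothesis $s>m+2$ is indispensable; the overall scheme is a reorganization of the argument used in \cite{BFMT} to prove Theorem~\ref{InSliceH}.
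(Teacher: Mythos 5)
First, for context: the paper gives no argument for this lemma at all; it is quoted as a byproduct of the proof of Theorem \ref{InSliceH} in \cite{BFMT}. So the question is whether your self-contained adaptation of Lemma \ref{allupperbound} is sound. Your geometric preliminaries are correct and are the right tools: $\V^{\perp}*p=\pi^{-1}(\Vp+\pi(p))$, the equivalence $B_{\Hn}(q,\rho)\cap(\V^{\perp}*p)\neq\varnothing$ if and only if $|P_V(\pi(q)-\pi(p))|\leq\rho$, and the resulting Heisenberg analogue of \eqref{mubydelta}. The genuine gap is that your opening reduction is to a \emph{false} statement: the quantity
\[
\int^{*}\Ha^{s-m}\bigl[(A\setminus B_{\Hn}(p,r))\cap(\V^{\perp}*p)\bigr]\,d\munm(V)
\]
can be infinite, so no amount of near/far splitting can prove it finite. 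The reason is the obstacle you yourself identify but then try to absorb: every coset $\V^{\perp}*p$ contains the entire vertical line $L_p=\pi^{-1}(\pi(p))$, for \emph{every} $V$, so mass of $A$ accumulating along $L_p$ at heights far from $p$ lies near every slice, and integration in $V$ gains nothing. Concretely, take $n\geq2$, $m=1$, $s=2n+1$, $p=0$, and $A=\bigcup_{i}E_i\times[1,2]$, where $E_i\subset\Rnn$ is a union of $N_i\approx2^{i(2n-3/2)}$ concentric spheres centered at $0$ with radii comparable to $2^{-i}$. Covering each $E_i\times[1,2]$ by Heisenberg balls via Lemma 6.5 of \cite{BFMT} gives $\Ha^{s}(A)\lesssim\sum_iN_i2^{-i(2n-1)}=\sum_i2^{-i/2}<\infty$, while for every $V$ the hyperplane $\Vp$ meets each sphere in a great subsphere, so the mass distribution principle gives
\[
\Ha^{s-m}\bigl[A\cap(\V^{\perp}*0)\bigr]\geq\sum_i\Ha^{s-m}\bigl[(E_i\cap\Vp)\times[1,2]\bigr]\gtrsim\sum_iN_i2^{-i(2n-2)}=\sum_i2^{i/2}=\infty.
\]
Since $A\cap B_{\Hn}(0,1)=\varnothing$, your integral is identically $+\infty$ for every $r\leq1$. (This does not contradict the lemma: these slices have dimension exactly $s-m$.) The same example pinpoints why your near-ball step cannot be completed: Lemma 6.5 of \cite{BFMT} bounds the Hausdorff \emph{content} of the $\epsilon$-cylinder at the single scale $\epsilon$ by $\epsilon^{s-m-2}$, but a content bound at one scale does not bound Hausdorff measure, and here the measure of the near part really is infinite.

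The fix keeps your estimates but changes the decomposition: exhaust by complements of cylinders, not of metric balls. Set $C_{\delta}=\pi^{-1}(B_E^{2n}(\pi(p),\delta))$. Your ``far'' computation, essentially verbatim, proves
\[
\int^{*}\Ha^{s-m}\bigl[(A\setminus C_{\delta})\cap(\V^{\perp}*p)\bigr]\,d\munm(V)\lesssim\delta^{-m}\bigl(\Ha^{s}(A)+1\bigr)<\infty,
\]
because every ball in a fine cover of $A\setminus C_{\delta}$ now has center at horizontal distance at least $\delta$ from $\pi(p)$, so the weight $|\pi(q_{kj})-\pi(p)|^{-m}$ is bounded by $\delta^{-m}$ and the sum is controlled by $\delta^{-m}\sum_j\rho_{kj}^{s}$. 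Hence for $\munm$-a.e.\ $V$ one has $\dim[(A\setminus C_{1/i})\cap(\V^{\perp}*p)]\leq s-m$ for all $i$ simultaneously. The leftover set is $A\cap(\V^{\perp}*p)\cap L_p\subset L_p$, which has Heisenberg dimension $2\leq s-m$; this single observation, rather than any near/far balancing, is where the hypothesis $s>m+2$ enters. Countable stability of Hausdorff dimension applied to $A\cap(\V^{\perp}*p)=\bigl[A\cap(\V^{\perp}*p)\cap L_p\bigr]\cup\bigcup_i\bigl[(A\setminus C_{1/i})\cap(\V^{\perp}*p)\bigr]$ then yields the lemma for all $p$. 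So your ingredients are the right ones; the error lies in what they are asked to prove.
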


Just as before with Lemma \ref{allupperbound}, Lemma \ref{allupperboundH} holds at the level of dimension. One might expect that the stronger statement, at the level of measures, holds true. The following weaker result, analogous to Theorem \ref{UpperBound}, holds.

\begin{theorem}
Let $m+2<s\leq 2n$. If $A\subset\Hn$ is a Borel set such that $0<\Ha^s(A)<\infty$, then for almost every $p\in\Hn$
\[\Ha^{s-m}[A\cap(\V^{\perp}*p)]\leq s-m\text{ for }\munm-a.e.\ V\in\Gh.\]
\end{theorem}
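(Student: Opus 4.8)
The plan is to mimic the proof of Theorem \ref{UpperBound}, replacing the Euclidean orthogonal projection $P_V$ by the homogeneous horizontal projection $P_\V=P_V\circ\pi$ and the affine slices $\Vp+x$ by the right cosets $\V^\perp*p$. (I read the displayed conclusion as $\Ha^{s-m}[A\cap(\V^\perp*p)]<\infty$, the exact Heisenberg analogue of Theorem \ref{UpperBound}.) Two structural facts drive the argument. First, since $\pi$ is $1$-Lipschitz from $(\Hn,d_\Hn)$ to $(\Cn,d_E)$ and $d_\Hn\lfloor_\V=d_E\lfloor_\V$, the map $P_\V:(\Hn,d_\Hn)\to(\V,d_E)\cong\R^m$ is $1$-Lipschitz, and its fibers are exactly $P_\V^{-1}(v)=\V^\perp*v$. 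Second, $\V^\perp*p=\V^\perp*P_\V(p)$ for every $p$, because $qp^{-1}\in\V^\perp$ if and only if $P_\V(q)=P_\V(p)$. Feeding $P_\V$ into Eilenberg's lemma (Theorem 13.3.1 of \cite{BurZal}) yields the Heisenberg analogue of \eqref{eilen},
\[\int_\V\Ha^{s-m}(A\cap(\V^\perp*v))\,d\Ha^m(v)\leq k(s,m)\Ha^s(A).\]

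Next I would prove the local claim: fix $R>0$; for $\Ha^{2n+2}$-a.e.\ $p\in\Hn$ one has $\Ha^{s-m}(A\cap(\V^\perp*p))\mathbbm{1}_{C_R}(p)<\infty$ for $\munm$-a.e.\ $V$, where $C_R:=\{(z,t):d_E(z,V)\leq R,\ |t|\leq R^2\}$ is a Euclidean cylinder over the tube around $V$ in $\Cn$, capped in the vertical direction. The reason for using $C_R$ rather than a metric tube is that its $P_\V$-pushforward is explicitly computable: writing $\Ha^{2n+2}=c_0\mathcal{L}^{2n+1}$ and using $P_\V=P_V\circ\pi$, one has $\pi_\#(\mathcal{L}^{2n+1}\lfloor_{C_R})=2R^2\,\mathcal{L}^{2n}\lfloor_{\{d_E(\cdot,V)\leq R\}}$ and then $P_{V\#}(\mathcal{L}^{2n}\lfloor_{\{d_E(\cdot,V)\leq R\}})=\mathcal{L}^{2n-m}(B_{V^\perp}(0,R))\,\Ha^m$, so that $P_{\V\#}(\Ha^{2n+2}\lfloor_{C_R})=\tilde c(R)\Ha^m$ with $\tilde c(R)$ independent of $V$ by rotational symmetry. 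With this in hand, Tonelli, the identity $\V^\perp*p=\V^\perp*P_\V(p)$, and the Heisenberg--Eilenberg inequality give
\begin{align*}
\int_{\Hn}\int_{\Gh}&\Ha^{s-m}[A\cap(\V^\perp*p)]\mathbbm{1}_{C_R}(p)\,d\munm(V)\,d\Ha^{2n+2}(p)\\
&=\int_{\Gh}\int_{\Hn}\Ha^{s-m}[A\cap(\V^\perp*P_\V(p))]\,d(\Ha^{2n+2}\lfloor_{C_R})(p)\,d\munm(V)\\
&=\tilde c(R)\int_{\Gh}\int_\V\Ha^{s-m}[A\cap(\V^\perp*v)]\,d\Ha^m(v)\,d\munm(V)\leq \tilde c(R)k(s,m)\Ha^s(A)<\infty,
\end{align*}
which forces the inner integrand to be finite for $\Ha^{2n+2}$-a.e.\ $p$, proving the claim.

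Finally I would argue by contradiction, exactly as in Theorem \ref{UpperBound}. If the set $\Omega$ of bad points $p$ (those for which $\Ha^{s-m}(A\cap(\V^\perp*p))=\infty$ on a $\munm$-positive set of planes) had positive $\Ha^{2n+2}$-measure, then by inner regularity one picks a compact $\Omega'\subset\Omega$ of positive measure and a radius $R_0$ with $\Omega'\subset B_{\Hn}(0,R_0)$; since $0\in\V$ and $\|(z,t)\|_{\Hn}\leq R_0$ forces $|z|\leq R_0$ and $|t|\leq R_0^2/4$, every such $p$ lies in $C_{R_0}$ for every $V$, so on $\Omega'$ the indicator $\mathbbm{1}_{C_{R_0}}$ is identically $1$ and the claim with $R=R_0$ is contradicted. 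I expect the main obstacle to be precisely the pushforward identity $P_{\V\#}(\Ha^{2n+2}\lfloor_{C_R})=\tilde c(R)\Ha^m$ with constant uniform in $V$: the cutoff set must be chosen so that (i) its $P_\V$-image is an honest multiple of $\Ha^m$, (ii) the constant is independent of $V$, and (iii) it still swallows every Heisenberg ball about the origin. A secondary point requiring care is the joint Borel measurability of $(p,V)\mapsto\Ha^{s-m}(A\cap(\V^\perp*p))$ needed for Tonelli, which can be handled as in \cite{MattilaMauldin,BFMT}.
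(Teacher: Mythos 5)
Your proof is correct and follows essentially the same route as the paper's: apply Eilenberg's lemma to the $1$-Lipschitz map $P_{\V}$, localize with a cutoff region around $\V$ whose $P_{\V}$-pushforward is a $V$-independent constant multiple of $\Ha^m$, integrate over $\Gh$ via Tonelli, and conclude by inner regularity and a compactness/contradiction argument. The only (cosmetic) difference is the cutoff set: you use the capped cylinder $C_R$ while the paper uses the round Euclidean tube $N_E(\V,R)=B_{\Vp,E}(0,R)\times\V$, and the two pushforward computations are the same product factorization.
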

\begin{proof}

First, we show that for any $R>0$, for $\mathcal{L}^{2n+1}-$almost every $p\in\Hn$
\begin{equation}\label{upperb}
\Ha^{s-m}[A\cap(\V^{\perp}*p)]\mathbbm{1}_{N_E(\V,R)}(p)<\infty,
\end{equation}
for $\munm-$almost every $V\in\Gh$. Here, as before, $N_E(\V,R)$ is the Euclidean tubular neighborhood of $\V$ of radius $R$ and $\mathcal{L}^{2n+1}$ is the $(2n+1)$-dimensional Lebesgue measure. We remark that there is a constant $d=d(n)$, depending only on $n$, such that $\mathcal{L}^{2n+1}=d\Ha^{2n+2}$. So using $\mathcal{L}^{2n+1}$ is equivalent to using the, arguably more intrinsic, measure $\Ha^{2n+2}$. \\
To show (\ref{upperb}), let $\lambda=\mathcal{L}^{2n+1}\lfloor_{N_E(\V,R)}$. Since $N_E(\V,R)=B_{\Vp,E}(0,R)\times\V$, for any measurable set $S\subset\V$ we have 
\begin{align*}
P_{\V\#}\lambda(S)&=\lambda(P_{\V}^{-1}(S))\\
&=\mathcal{L}^{2n+1}(P_{\V}^{-1}(S)\cap N_E(\V,R))\\\
&=\mathcal{L}^{2n+1-m}(B_{\Vp,E}(0,R))\mathcal{L}^m(S)\\
&=\tilde{\tilde{C}}(R,n,m)\Ha^m(S).
\end{align*}
Here $\tilde{\tilde{C}}(R,n,m)$ is a constant, depending only on $R$, $n$, and $m$, which involves the $\mathcal{L}^{2n+1-m}-$volume of the ball of radius $R$ in $\R^{2n+1-m}$, as well as the universal constant $c$ such that $c\Ha^m=\mathcal{L}^m$ in $\R^m$.\\
For each $\V$, the map $P_{\V}$ is 1-Lipschitz, so we can apply Elienberg's lemma to get 
\begin{align*}
&\int_{\Hn}\int_{\Gh}\Ha^{s-m}[A\cap(\V^{\perp}*p)]\mathbbm{1}_{N_E(\V,R)}(p)d\munm(V)d\mathcal{L}^{2n+1}(p)\\
&=\int_{\Gh}\int_{\Hn}\Ha^{s-m}[A\cap(\V^{\perp}*P_{\V}(p))]d\lambda(p)d\munm(V)\\
&=\tilde{\tilde{C}}(R,m,n)\int_{\Gh}\int_{\V}\Ha^{s-m}[A\cap(\V^{\perp}*v)]d\Ha^m(v)d\munm(V)\\
&\lesssim \Ha^{s}(A)<\infty,
\end{align*}
where the last line follows by integrating Elinebger's inequality with respect to $\munm$.\\
Now supppose there is $E\subset\Hn$ is such that $\mathcal{L}^{2n+1}(E)>0$ and for every $p\in E$,
\[\Ha^{s-m}[A\cap(\V^{\perp}*p)]=\infty,\]
for a $\munm-$positive measure subset of $\Gh$. Without lost of generality we may assume $E$ is compact, so there is $R_0>0$ such that $E\subset B_E(0,R_0)\subset N_E(\V,R_0)$ for every $\V$. So we have that for every $p\in E$
\[\Ha^{s-m}[A\cap(\V^{\perp}*p)]\mathbbm{1}_{N_E(\V,R_0)}(p)=\Ha^{s-m}[A\cap(\V^{\perp}*P_{\V}(p))]=\infty,\] 
 for a $\munm-$positive measure subset of $\Gh$ which is a contradiction. It follows that for $\mathcal{L}^{2n+1}-$almost every $p\in\Hn$
 \[\Ha^{s-m}[A\cap(\V^{\perp}*p]<\infty\text{ for }\munm-a.e. V\in\Gh.\]
\end{proof}

Finally, all is set to prove Theorem \ref{sliceinh}. The proof which follows the same technique as the proof of Theorem \ref{isoprojint}.

\begin{proof}[Proof of Theorem \ref{sliceinh}]
Without loss of generality assume $A$ is compact
and set
\[B:=\{p\in\Hn:\munm(V:\dim [A\cap(\V^{\perp}*p)]\geq s-m)=0\}.\]
Since $A$ is compact, $(p,\V)\to \dim [A\cap(\V^{\perp}*p)]$ is a Borel function, so $B$ is a Borel set. Suppose $\dim B>\sigma>m+2$ and pick $\nu\in\Ma(B)$ so that $\nu(B_{\Hn}(p,r))\lesssim r^{\sigma}$ so that by \cite[Proposition 6.1]{BFMT}, $P_{\V\#}\nu\ll\Ha^m$ for $\munm-$almost every $V\in\Gh$. Similarly let $\mu=\Ha^s\lfloor_{A}\in\Ma(A)$, possibly restricted to a further subset, so that $\mu(B_{\Hn}(p,r))\lesssim r^s$. It is also true that $P_{\V\#}\mu\ll\Ha^m$ for $\munm-$almost every $V\in\Gh$. By Theorem \ref{InSliceH}, for $\mu-$almost all $p\in\Hn$
\begin{equation}
\dim [A\cap(\V^{\perp}*p)]\geq s-m, \text{ for }\munm-a.e.\ V\in\Gh.
\end{equation}
By assumption, for $\nu-$almost every $q\in\Hn$
\begin{equation}
\dim [A\cap(\V^{\perp}*q)]< s-m, \text{ for }\munm-a.e.\ V\in\Gh.
\end{equation}
Applying Tonelli's theorem we may switch the order of the measures so that for $\munm-$almost every $V\in\Gh$
\begin{equation}\label{p}
\dim [A\cap(\V^{\perp}*p)]\geq s-m, \text{ for }\mu-a.e.\ p\in\Hn,
\end{equation}
and,
\begin{equation}\label{q}
\dim [A\cap(\V^{\perp}*q)]< s-m, \text{ for }\nu-a.e.\ q\in\Hn.
\end{equation}
As before, a contradiction is found by finding $p\in spt(\mu)$ and $q\in spt(\nu)$ satisfying (\ref{p}) and (\ref{q}) respectively, and such that $P_{\V}(p)=P_{\V}(q).$
For $\munm-$almost every $V\in\Gh$, $P_{\V\#}\mu$ and  $P_{\V\#}\nu$ have densities $\mu_V$ and $\nu_V$ respectively. For such $V$ define,

\begin{align*}
&A_{\V}:=\{p\in\Hn: \dim [A\cap(\V^{\perp}*p)]\geq s-m\}\\
&B_{\V}:=\{q\in\Hn: \dim [A\cap(\V^{\perp}*q)]< s-m\}\\
&C_{\V}:=\{v\in\V: \mu_V(v)\nu_V(v)>0\}.
\end{align*}
For $\munm-$almost every $\V$, $\mu(\Hn\setminus A_V)=0$ and $\nu(\Hn\setminus B_V)=0.$ By Lemma \ref{MutualH}, for a $\munm-$positive measure subset of $\Gh$, $\Ha^m(C_V)>0$. We can pick a horizontal subgroup where all 3 things are satisfied simultaneously. By \ref{totalmassh},
\[\int_{\Hn}\mu_{\V^{\perp}*v}(\Hn\setminus A_V)d\Ha^m(v)=\mu(\Hn\setminus A_V)=0,\]
and
\[\int_{\Hn}\nu_{\V^{\perp}*v}(\Hn\setminus B_V)d\Ha^m(v)=\nu(\Hn\setminus B_V)=0.\]
By (\ref{pushslice}), $0<\mu_V(v)=\mu_{\V^{\perp}*v}(\Hn)$, and $0<\nu_V(v)=\nu_{\V^{\perp}*v}(\Hn).$ Hence, $\mu_{\V^{\perp}*v}(A_V)>0$ and $\nu_{\V^{\perp}*v}(B_V)>0$ for almost every $v\in\V$. In particular, there is $v\in C_V$ for which both $\mu_{\V^{\perp}*v}$ and $\nu_{\V^{\perp}*v}$ are positive, so we find $p\in A_V$, $q\in B_V$ such that $P_{\V}(p)=P_{\V}(q)=v.$ This completes the proof.

\end{proof}

\bibliographystyle{plain}

\bibliography{BiblioProj}

\end{document}